\newtheorem{thm}{Theorem}[section]
\newtheorem{lem}[thm]{Lemma}
\newtheorem{prop}[thm]{Proposition}
\newtheorem{cor}[thm]{Corollary}
\theoremstyle{definition}
\newtheorem{defin}[thm]{Definition}
\newtheorem{nota}[thm]{Notation}
\newtheorem{exam}[thm]{Example}
\newtheorem{assumption}[thm]{Assumption}
\theoremstyle{remark}
\newtheorem{rem}{Remark}
\newcommand{\homi}{\var{A}}
\newcommand{\func}{\mathcal{F}}
\newcommand{\pr}{\mathrm{pr}}
\newcommand{\deck}{\mathrm{deck}}
\newcommand{\im}{\mathrm{Im}}
\newcommand{\N}{\mathbb{N}}
\newcommand{\divi}{\mathrm{div}}
\newcommand{\cate}[1]{\mathcal{#1}}
\newcommand{\alg}[1]{\mathrm{\mathbf #1}} 
\newcommand{\struc}[1]{\langle #1 \rangle}
\newcommand{\classop}[1]{\mathbb{#1}}
\newcommand{\var}[1]{\mathcal{#1}}
\newcommand{\me}{\mathrm{\bf m}}
\newcommand{\ess}{\mathrm{ess}}
\newcommand{\Inv}{\mathrm{\mathbf{Inv}}}
\newcommand{\id}{\mathrm{id}}
\newcommand{\fin}{\mathrm{fin}}
\newcommand{\mbf}[1]{\boldsymbol{#1}}
\newcommand{\lucas}{\text{\bf\L}}
\newcommand{\free}{\mathfrak{F}}
\title[The Minor order. of homo. via Nat. Dual.]{The Minor Order of Homomorphisms via Natural Dualities}
\author{Wolfgang Poiger}
\address{Department of Mathematics, FSTM, University of Luxembourg, 6, Avenue de la Fonte, L-4364 Esch-sur-Alzette, Luxembourg}
\email{wolfgang.poiger@uni.lu}
\author{Bruno Teheux}
\address{Department of Mathematics, FSTM, University of Luxembourg, 6, Avenue de la Fonte, L-4364 Esch-sur-Alzette, Luxembourg}
\email{bruno.teheux@uni.lu}
\keywords{minor relation, minor poset, natural dualities, reconstruction problems, universal algebra, clones, partition lattices}
\subjclass[2020]{03C05, 06A07
}
\begin{document}

\begin{abstract}
We study the minor relation for algebra homomorphims in finitely generated quasivarieties that admit a logarithmic natural duality. We characterize the minor homomorphism posets of finite algebras in terms of disjoint unions of dual partition lattices and investigate reconstruction problems for homomorphisms.
\end{abstract}
\maketitle

\section{Introduction}\label{Introduction}

\emph{Minors} of an $n$-ary operation $f$ on a set $A$ are those operations $g$ on $A$ that can be obtained from $f$ by identifying or permuting arguments, or by adding/deleting inessential arguments. We write $g \preceq f$ if $g$ is a minor of $f$. Thus defined, the minor relation $\preceq$ is a preorder on the set of all operations on $A$, and the \emph{minor poset of $A$} is the corresponding partial order obtained after identifying operations by the \emph{minor equivalence} $g\equiv f \Leftrightarrow (g \preceq f \wedge f \preceq g)$. The minor preorder and its associated partial order were introduced in the setting of clone theory, where they were used to study and characterize equational classes of Boolean functions (see \cite{Ekin2000}, for instance). Since then, they were the topic of several investigations (see \cite{Bouaziz2010, Couceiro2005, Couceiro2018,   MinorPosets, Pippenger2002}, to name a few). The general idea behind these investigations is to determine how much information about an operation $f$ can be retrieved from its minors. In particular, a series of recent papers deals with reconstruction properties, asking which operations $f$ can, up to minor equivalence, be recovered from (a portion of) their minors (see \cite{Lehtonen2014,Lehtonen2015,Lehtonen2016,Reconstruction}). We adopt the more general framework of these papers, where not only operations but arbitrary functions of several arguments $f\colon A^n\to B$ are considered.

In this paper, we investigate the minor order in the particular case of algebra homomorphisms. More precisely, let $\var{A}$ be a class of algebras of the same type and for $\alg{A},\alg{B}\in \var{A}$ let $f\colon \alg{A}^n \to \alg{B}$ be a homomorphism. Then, any minor of $f$ is itself a homomorphism and we are naturally led to study the restriction of the minor poset to the collection of homomorphisms $\bigcup\{\var{A}(\alg{A}^n, \alg{B})\mid n\geq 1\}$ up to minor equivalence, which we call the \emph{minor homomorphism poset}. In that perspective, our main tool is the use of natural dualities to translate various problems about the minor homomorphism poset into their dual equivalents. If the duality on $\var{A}$ is nice enough (logarithmic, in our case), these dual problems turn out to be easier than the original ones. In particular, we solve problems about reconstructibility (Section \ref{sect:ideals}) and about finding structural descriptions of the minor homomorphism posets over finite algebras (Section \ref{MinorHomomorphism}). One purpose of this paper is to demonstrate how to successfully apply the theory of natural dualities and thus contribute to the popularization of these methods in combinatorics. 

The theory of natural dualities emerged in the late 1970s in order to give a common ground for the development of dual equivalences generalizing Stone duality for Boolean algebras and Priestley duality for distributive lattices. The general idea is that, for a finite algebra $\underline{\alg{M}}$, it is sometimes possible to find a discrete structure $\utilde{\alg{M}}$ based on the same set as $\underline{\alg{M}}$ (therefore often called an \emph{alter-ego} of $\underline{\alg{M}}$), such that the category of algebras $\var{A} = \classop{ISP}(\underline{\alg{M}})$ with homomorphisms is dually equivalent to the category of structured topological spaces $\var{X} = \classop{IS}_c\classop{P}(\utilde{\alg{M}})$ of closed subspaces of powers of $\utilde{\alg{M}}$ with continuous structure preserving maps. Simply put, under these circumstances every algebra $\alg{A}\in \var{A}$ has a dual topological structure $\alg{A}^* \in \var{X}$ and every homomorphism $\alg{A}\to\alg{B}$ has a dual morphism $\alg{B}^* \to \alg{A}^*$. We build our developments on previous approaches of clone theory in the framework of natural dualities (see \cite{Kerkhoff2011,Kerkhoff2013,EssVar}). 

Products and coproducts are interchanged under a dual equivalence, so the dual of a homomorphism $f\colon \alg{A}^n \rightarrow \alg{B}$ is a morphism $f^*\colon \alg{B}^* \to \coprod_n \alg{A}^*$.
We restrict our investigation to quasivarities $\var{A}=\classop{ISP}(\underline{\alg{M}})$ for which there is an alter-ego $\utilde{\alg{M}}$ that yields a \emph{logarithmic} duality, which means that finite coproducts in the dual category of $\var{A}$ are realized by direct unions (disjoint unions with constants amalgamated). Under these circumstances, a dual morphism $f^*$ as above is easier to work with than $f$.

Let us point out some of our main achievements.  
Theorem \ref{thm:weakly} states that, up to minor equivalence, homomorphisms $f\colon \alg{A}^n \to \alg{B}$ without inessential arguments are determined by their identification minors (which are the homomorphisms obtained by identifying two arguments of $f$). Proposition \ref{prop:tadam} states that homomorphisms $f\colon \alg{A}^n \to \alg{B}$ are totally asymmetric in the sense that they have trivial invariance group.

Theorem \ref{thm:weakly} and Proposition  \ref{prop:tadam} build on Proposition \ref{partition}, which shows that the principal ideal generated in the minor homomorphism poset by a homomorphism $f\colon \alg{A}^n\to \alg{B}$ with $n$ essential arguments is anti-isomorphic to the full $n$-element partition lattice. Theorem \ref{MorphismsCharacterization} and its Corollary \ref{Top-Down} completely characterize the minor homomorphism posets of finite members of $\var{A}$ in terms of disjoint unions of such partition lattices, by identifying their maximal elements. 

We illustrate our developments with numerous examples. In particular, we show that even though the minor homomorphism poset of an algebra encodes very little information about this algebra, it is sometimes  enough to characterize some of the algebra's properties. For instance, we show that it is possible to recognize finite complemented lattices among distributive lattices by looking at their minor homomporphism posets (Proposition \ref{prop:complat}). 

The paper is organized as follows. In Section \ref{Preliminaries} we give a brief introduction to the theory of natural dualities and we introduce the examples of dualities that accompany us throughout the paper. We then recall the definition of the minor preorder and the constructions related to it. In Section \ref{section:minor}, we introduce the minor homomorphism posets and develop the techniques to investigate them through duality. In particular, we show how and why the setting of logarithmic dualities is especially well suited for the investigation of these posets. Section \ref{sect:ideals} is devoted to the description of principal ideals in minor homomorphism posets and to reconstructibility problems. In the finite case, we show the importance of maximal elements in the description of these posets. Therefore, Section \ref{MinorHomomorphism} focuses on identifying these maximal elements to characterize the minor homomorphism posets in terms of disjoint unions of partition lattices. We conclude the paper with final remarks and topics for further research.

%which is the field of universal algebra that investigate those sets of operations that are closed under function composition. 

\section{Preliminaries}\label{Preliminaries}

In this section, we set some notation and vocabulary for the rest of the paper. We recall the basic constructions of natural dualities, which we illustrate with a number of examples. In Subsection \ref{Minor} we recall the definition of the minor relation and the concepts related to it.   

\subsection{Posets, Partitions and Permutations}\label{sec:nota}
If $\alg{P}=\struc{P, \leq}$ is a partially ordered set (in short, a \emph{poset}) and $p\in P$, then $p{\downarrow}$ and $p{\uparrow}$ denote the \emph{principal ideal} and the \emph{principal filter} generated by $p$, respectively. We use $\alg{P}^\partial$ to denote the dual poset of $\alg{P}$, that is, the poset  $\struc{P, \leq^\partial}$  defined by $p\leq^\partial q$ iff $q\leq p$.

Recall that a poset $\alg{L}$ is called a \emph{lattice} if every pair $\{a,b\}$ of elements of $\alg{L}$ has a greatest lower bound $a\wedge b$ and a least upper bound $a\vee b$. A lattice $\alg{L}$ is \emph{distributive} if it satisfies the equation $x\wedge (y \vee z) = (x\wedge y) \vee (x\wedge z)$ and its dual. Distributive lattices can be characterized as those lattices which neither contain the pentagon lattice $N_5$ nor the diamond lattice $M_3$ as sublattice (see, \emph{e.g.}, \cite[Section 3.II.1]{GenLattice}). 

If $\pi$ is a partition of a set $S$, we often refer to its elements as \emph{blocks}.  We denote by $\Pi_n$ the set of partitions of $[n] := \{ 1,2,\dots, n \}$. In particular, we have $\Pi_0 = \{ \varnothing \}$ and $\Pi_1 = \{ \{ 1 \} \}$. We use the usual lattice order on $\Pi_n$, which is defined by  $\pi_1 \leq \pi_2$ iff for every $ B \in \pi_1$ there is some $C\in \pi_2$ such that $B \subseteq C$. We refer to $\struc{\Pi_n, \leq}$ as  the \textit{$n$-th partition lattice} $\alg{\Pi}_n$. The cardinality of  $\Pi_n$ is given by the \textit{Bell number} $B_n$ for every $n\geq 0$ (see \cite{OEIS:bell}).  

Finally, we denote by $\alg{S}_n$ the \emph{$n$-th symmetric group}, that is, the group of permutations over $[n]$.

\subsection{Natural Dualities}\label{sec:natdual}
The theory of natural dualities emerged in the late 1970s in order to give a common framework to develop and study dual equivalences for categories of algebras, generalizing Stone duality for Boolean algebras and Priestley duality for distributive lattices. Natural dualities are at the core of our investigation of the minor posets of homomorphims. Here we only recall the basic definitions of this theory. We refer the reader to \cite{NatDual} for a more detailed reference.

%Let $\mathcal{A}$ and $\mathcal{X}$ be two categories. A \textit{dual adjunction} between $\mathcal{A}$ and $\mathcal{X}$ is given by $(D,E,e,\varepsilon)$ where $D\colon \mathcal{A} \rightarrow \mathcal{X}$ and $E\colon \mathcal{X} \rightarrow \mathcal{A}$ are adjoint contravariant functors and $e\colon id_\mathcal{A} \Rightarrow ED$ and $\varepsilon\colon id_\mathcal{X} \Rightarrow DE$ are natural transformations. A dual adjunction is a \textit{dual representation} if $e_A \colon A \rightarrow ED(A)$ is an isomorphism for every $A\in \mathcal{A}$ and a \textit{dual equivalence} if, furthermore, $\epsilon_X \colon X \rightarrow DE(X)$ is an isomorphism for every $X\in\mathcal{X}$.

Let  $\mathcal{A} = \mathbb{I}\mathbb{S}\mathbb{P}(\underline{\textbf{M}})$ be the quasivariety generated by a finite algebra $\underline{\textbf{M}}$ with underlying set $M$ (we claim that our results can be naturally generalized for quasivarieties generated by a finite set of finite algebras). We denote by $\utilde{\alg{M}}$ an alter-ego of $\underline{\alg{M}}$, \emph{i.e.}, a topological structure
\[
\utilde{\alg{M}}=\struc{M, G, H, R, \mathcal{T}_{dis}},
\]
where $\mathcal{T}_{dis}$ is the discrete topology on $M$ and $G$, $H$, $R$ are a set (possibly empty) of algebraic operations, algebraic partial operations (with nonempty domain), and algebraic (nonempty) relations on $\underline{\alg{M}}$, respectively (here, an $n$-ary relation is algebraic on $\underline{\alg{M}}$ if it is a subalgebra of $\underline{\alg{M}}^n$, and an operation or a partial operation is algebraic on $\underline{\alg{M}}$ if its graph is algebraic on $\underline{\alg{M}}$). We denote by $\cate{X}$ the class $\classop{IS}_c\classop{P}(\utilde{\alg{M}})$ of topological structures that are isomorphic to a closed substructure of a nonempty power of $\utilde{\alg{M}}$, and we consider $\cate{X}$ as a category with continuous structure preserving maps as arrows. 
%For any $X, Y \in \cate{X}$ we denote by $\cate{X}(X,Y)$ the set of the structure preserving continuous maps $f:X\to Y$. 
We let $\mathcal{G}$, $\mathcal{H}$ and $\mathcal{R}$ be sets of symbols for operations, partial operations and relations, respectively, that correspond to the type of elements of $\cate{X}$.
For any $\alg{X} \in \cate {X}$, we use $\alg{X}_*$ to denote  $\cate{X}(\alg{X},\utilde{\alg{M}})$, considered as a subalgebra of $\underline{\alg{M}}^X$.

For every $\alg{A}\in \var{A}$, the Preduality Theorem \cite[Theorem 1.5.2]{NatDual} states that $\var{A}({\alg{A}, \underline{\alg{M}}})$ is a closed substructure $\alg{A}^*$ of $\utilde{\alg{M}}^A$, and therefore an element of $\cate{X}$. Moreover, the object mappings $\alg{A}\mapsto \alg{A}^*$ and $\alg{X} \mapsto \alg{X}_* $ can be lifted to contravariant functors $\cdot^*\colon \cate{A} \to \cate{X}$ and $\cdot_* \colon \cate{X} \to \cate{A}$ by setting
\begin{align*}
f^*(u)=u\circ f \qquad &\text{ for all } f\in\cate{A}(\alg{A}, \alg{B}) \text{ and }u \in \alg{B}^*,\\
\varphi_*(x)=x\circ \varphi \qquad & \text{ for all } \varphi \in \cate{X}(\alg{X}, \alg{Y}) \text{ and } x \in \alg{Y}_*.
\end{align*}
The Dual Adjunction Theorem \cite[Theorem 1.5.3]{NatDual} asserts that $\cdot^*$ and $\cdot_*$ define a dual adjunction between $\cate{A}$ and $\cate{X}$, where the associated natural transformations $e\colon 1_\cate{A} \to (\cdot^*)_*$ and $\varepsilon\colon 1_{\cate{X}} \to (\cdot_*)^*$ are given by
\begin{align*}
e_\alg{A}(a)(u)=u(a) \qquad & \text{for all } \alg{A}\in \var{A}, u \in \alg{A}^* \text{ and } a \in A,\\
\varepsilon_\alg{X}(\varphi)(x)=\varphi(x) \qquad & \text{for all } \alg{X}\in \cate{X}, \varphi \in \alg{X}_* \text{ and } x \in X.
\end{align*}

\begin{defin}[\cite{NatDual}]
We say that $\utilde{\alg{M}}$ \emph{yields a duality on $\var{A}$} if $e_A$ is an isomorphism for every $\alg{A}\in \var{A}$, and that $\utilde{\alg{M}}$ \emph{yields a full duality on $\var{A}$} if in addition $\varepsilon_\alg{X}$ is an isomorphism for every $\alg{X} \in \cate{X}$. A full duality is called \emph{strong} if $\utilde{\alg{M}}$ is injective in $\cate{X}$. 
\end{defin}

For $\alg{X}$ in $\mathcal{X}$, we denote by $\mathcal{C}^\alg{X}$ the set of $0$-ary functions in $\mathcal{G}^\alg{X}$  and refer to them as \textit{constants}, where each $c\in\mathcal{C}^{\alg{X}}$ is identified with its value, which forms a one-element subalgebra of $\underline{\textbf{M}}$. The structure on $\utilde{\textbf{M}}$ shall always be chosen such that no other total or partial function in $(\mathcal{G}^{\alg{X}}{\setminus} \mathcal{C}^{\alg{X}} \cup \mathcal{H}^{\alg{X}})$ is constant. %Furthermore, if $\utilde{\textbf{M}}$ yields a full duality on $\var{A}$ then $\mathcal{C}^{\alg{X}}$ is always a closed substructure of $\textbf{X}$.

 One of the main benefits of a full duality is that it maps products in one category to coproducts in the other category and vice versa. As we shall see throughout the paper, this correspondence is the key ingredient to our investigation of the minor order on homomorphisms.  
 
 The case of a full duality generated by a unary alter-ego  $\utilde{\alg{M}}$  (\emph{i.e.}, the partial and total operations of $\utilde{\alg{M}}$ are at most unary) is of particular interest, since  coproducts in $\cate{X}$ might turn out to be direct unions. Recall that the \emph{direct union} $\textbf{X}\oplus\textbf{Y}$ of $\alg{X},\alg{Y}\in \classop{IS}_c\classop{P}(\utilde{\alg{M}})$ (where $\utilde{\alg{M}}$ is unary) is defined on the disjoint union ${X}\uplus{Y}:=(\{1\}\times X) \cup (\{2\}\times Y)$ of $X$ and $Y$ by amalgamating $(1,c^\alg{X})$ and $(2,c^\alg{Y})$ for every $c\in \mathcal{C}$, by defining the operations and relation as the unions of the corresponding ones in $\alg{X}$ and $\alg{Y}$, and by equipping the resulting structure with the final topology with respect to the inclusion maps $\alg{X},\alg{Y} \to X\uplus Y$. If $\cate{X}$ is closed under direct unions, then the coproduct of $\alg{X}$ and $\alg{Y}$ in $\cate{X}$ is realized by $\alg{X}\oplus \alg{Y}$ (see \cite[Lemma 6.3.2]{NatDual}).
 
 \begin{defin}\label{defn:log}
 A unary structure $\utilde{\alg{M}}$ yields a \emph{logarithmic duality} on $\var{A}$ if it yields a strong duality on  $\var{A}$ and finite coproducts in $\cate{X}$ are realized by direct unions.
 \end{defin}
 
For the purposes of this paper it is convenient to think of the carrier set of the direct union of $\alg{X}$ and $\alg{Y}$ in a slightly different (but isomorphic) way as
\[
X{\setminus}\mathcal{C}^\alg{X} \uplus  Y{\setminus}\mathcal{C}^\alg{Y} \uplus \mathcal{C}^{\alg{X}\oplus\alg{Y}}.
\]   
The Logarithmic Duality Theorem \cite[Theorem 6.3.3]{NatDual} provides a sufficient condition to get a logarithmic duality. Here, a $n$-ary relation $R$ on $\utilde{\alg{M}}$ with arity $n \geq 2$ \emph{avoids binary products} if for all $1\leq i < j \leq n$ the set 
$$\{ \bigl(\pi_i\left(r\right),\pi_j\left(r\right) \bigr) \mid r\in R \}$$ 
contains no product of nontrivial subalgebras of $\utilde{\alg{M}}$.     

\begin{thm}[\cite{NatDual}]\label{thm:LDT}
Let $\utilde{\alg{M}}$ be a unary structure which yields a strong duality on $\mathbb{I}\mathbb{S}\mathbb{P}(\underline{\alg{M}})$. Then the following are equivalent:
\begin{enumerate}[(i)]
\item $\utilde{\alg{M}}$ yields a logarithmic duality on $\mathbb{I}\mathbb{S}\mathbb{P}(\underline{\alg{M}})$. 
\item For all $n\geq 2$, every $n$-ary relation of $\utilde{\alg{M}}$ avoids binary products. 
\end{enumerate}
\end{thm}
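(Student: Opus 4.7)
My plan is to prove the two implications separately. Since strong duality is already assumed in both directions, the question is entirely whether finite coproducts in $\cate{X}$ are realized by direct unions.

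For (ii) $\Rightarrow$ (i), I would first observe that, because $\utilde{\alg{M}}$ is unary, a map $h\colon \alg{X}\oplus\alg{Y} \to \utilde{\alg{M}}$ is structure-preserving if and only if its restrictions $h|_\alg{X}$ and $h|_\alg{Y}$ are morphisms (the agreement on constants is automatic since the constants are amalgamated in the direct union). This identifies $(\alg{X}\oplus\alg{Y})_*$ with pairs of morphisms and provides natural inclusions $\alg{X},\alg{Y}\hookrightarrow \alg{X}\oplus\alg{Y}$. I would then check that $\alg{X}\oplus\alg{Y}$ lies in $\cate{X}$ by showing that the evaluation map $e\colon \alg{X}\oplus\alg{Y}\to \utilde{\alg{M}}^K$ with $K:=(\alg{X}\oplus\alg{Y})_*$ is a closed embedding of structures. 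The only nontrivial point is that, for each $n$-ary $R\in \mathcal{R}$, the declared relation $R^{\alg{X}\oplus\alg{Y}}=R^\alg{X}\cup R^\alg{Y}$ coincides with the one inherited from $\utilde{\alg{M}}^K$.

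The direct-union inclusion into the inherited relation is immediate; for the reverse I would take a tuple $\mathbf{z}=(z_1,\ldots,z_n)\notin R^\alg{X}\cup R^\alg{Y}$ and separate it by producing some $h\in K$ with $(h(z_1),\ldots,h(z_n))\notin R^{\utilde{\alg{M}}}$. If all the $z_k$ lie in a single component (up to constants), the strong duality on that component yields such an $h$, extended arbitrarily on the other side. The crucial case is when some $z_i\in X\setminus\mathcal{C}^\alg{X}$ and some $z_j\in Y\setminus\mathcal{C}^\alg{Y}$ sit in different components. Here I would consider the sets $S_i=\{h_\alg{X}(z_i)\mid h_\alg{X}\in\alg{X}_*\}$ and $S_j=\{h_\alg{Y}(z_j)\mid h_\alg{Y}\in\alg{Y}_*\}$, which are subalgebras of $\underline{\alg{M}}$ and are nontrivial because strong duality forces non-constant elements to have non-constant evaluation maps under $\varepsilon$. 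Condition (ii) applied to $\pi_{i,j}(R)$ then produces a pair $(a,b)\in S_i\times S_j\setminus\pi_{i,j}(R)$, and the unary nature of the alter-ego allows independent choices of $h_\alg{X}\in\alg{X}_*$ with $h_\alg{X}(z_i)=a$ and $h_\alg{Y}\in\alg{Y}_*$ with $h_\alg{Y}(z_j)=b$, which glue to the separating morphism $h$. Once $\alg{X}\oplus\alg{Y}$ is placed in $\cate{X}$, the universal property of the coproduct follows routinely from the same unary observation.

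For (i) $\Rightarrow$ (ii), I would argue by contraposition. Given $\pi_{i,j}(R)\supseteq A\times B$ with $A,B$ nontrivial subalgebras of $\utilde{\alg{M}}$, I take $\alg{X}$ and $\alg{Y}$ to be substructures of $\utilde{\alg{M}}$ on $A$ and $B$ (or two copies of $\utilde{\alg{M}}$ itself) and exhibit a tuple in $\alg{X}\oplus\alg{Y}$ mixing the components at positions $i$ and $j$ that lies in the relation inherited from the embedding into powers of $\utilde{\alg{M}}$ but not in $R^\alg{X}\cup R^\alg{Y}$; this witnesses the failure of the coproduct being realized by a direct union. I expect the main obstacle to be the mixed-component case above: verifying that $S_i$ and $S_j$ are genuinely nontrivial (requiring careful use of strong duality to separate non-constant elements from constants) and justifying that $h_\alg{X}(z_i)$ and $h_\alg{Y}(z_j)$ can be chosen independently, which relies essentially on the unary type of $\utilde{\alg{M}}$ so that no operation of higher arity couples the two components.
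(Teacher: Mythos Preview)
The paper does not give a proof of this theorem at all: it is stated with the citation \cite{NatDual} (where it is Theorem~6.3.3) and used as a black box, so there is no ``paper's own proof'' to compare your proposal against.

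That said, your outline is essentially the argument one finds in \cite{NatDual}. The reduction of (i) to closure of $\cate{X}$ under direct unions, and the identification of the only nontrivial step as showing that the relation inherited on $\alg{X}\oplus\alg{Y}$ from the ambient power of $\utilde{\alg{M}}$ coincides with $R^{\alg{X}}\cup R^{\alg{Y}}$, is exactly the structure of the book proof; your mixed-component separation via the projections $\pi_{i,j}(R)$ and independent choice of $h_\alg{X},h_\alg{Y}$ is the key step there too. One point you should tighten: you assert that $S_i$ and $S_j$ are \emph{nontrivial} because $z_i,z_j$ are non-constant, but strong duality only tells you that $\varepsilon_\alg{X}(z_i)$ differs from each $\varepsilon_\alg{X}(c^\alg{X})$ for the constants $c$ actually present in the alter-ego. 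If $\underline{\alg{M}}$ has a one-element subalgebra $\{m\}$ not named by any nullary operation of $\utilde{\alg{M}}$, this does not yet exclude $S_i=\{m\}$. The book's treatment handles this by its standing hypotheses on the alter-ego; you should either invoke those or add a short argument covering the degenerate case $|S_i|=1$.
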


\begin{rem}
The main results of this paper are all based on the assumption of a logarithmic duality (see Assumption \ref{ass:main}). This framework might seem a bit narrow. However, due to the above theorem combined with other results from \cite{NatDual}, we can find a lot of examples of logarithmic dualities. In particular, there is a logarithmic duality for $\classop{ISP}(\underline{\alg{M}})$ if $\underline{\alg{M}}$ is \emph{quasi-primal}, that is, the ternary discriminator 
$$ t(x,y,z) = \begin{cases}
z & \text{ if } x = y \\
x & \text{ if } x \neq y
\end{cases} $$
is term-definable in $\underline{\alg{M}}$ (quasi-primal algebras are precisely the finite discriminator ones). Examples of quasi-primal algebras can, for example, be found in \cite{Werner1978, Burris1992}. In \cite{Murskii1975} it is shown that, over a fixed algebraic type containing some operation of arity at least $2$, almost all finite algebras of that type are quasi-primal. That is, a randomly chosen algebra of that type is quasi-primal with probability one.        
\end{rem}    

\subsection{Examples of Natural Dualities}\label{Examples}
We end this section with some concrete examples of full natural dualities, which will be used throughout this paper to both motivate and illustrate the results. All the dualities described here except for the last one are logarithmic due to the Logarithmic Duality Theorem \ref{thm:LDT}.

\subsubsection{Boolean Algebras.} Let $\mathcal{B}$ be the variety of Boolean algebras, which is generated as a quasivariety by the two-element Boolean algebra 
\[\underline{\alg{2}} := \struc{\{ 0,1\}, 0, 1, \wedge, \vee, \cdot^{-1}}.\]
The discrete space 
\[\utilde{\alg{2}} := \struc{\{ 0,1 \}, \mathcal{T}_{dis}}\]
 yields a strong duality between $\mathcal{B}$ and the category  $\mathcal{S} = \mathbb{I}\mathbb{S}_{c}\mathbb{P}(\utilde{\alg{2}})$ of Stone spaces (that is, zero-dimensional compact Hausdorff spaces). This is the formulation of the renowned \emph{Stone duality} (see \cite{Stone1936}) in the language of natural dualities. 
%The dual of a Boolean algebra $\textbf{B}\in\mathcal{B}$ is given by $\mathcal{B}(\textbf{B},\underline{2})$, which can be identified with the set of ultrafilters on $\textbf{B}$ since for every homomorphism $h\colon B \rightarrow \underline{2}$ the preimage $h^{-1}(\{ 1\})$ forms an ultrafilter. The evaluation $e_\textbf{B}(b)$ can then be identified with the set of all ultrafilters containing $b$, and the collection of all those clopen sets forms a basis for the topology $\tau$ on $\textbf{B}^*$.   

Any finite Boolean algebra $\alg{2}^k$ with $k\geq 1$ has the discrete space $\struc{[k],\mathcal{T}_{dis}}$ as dual space. The category of finite Boolean algebras $\mathcal{B}_{\fin}$ is therefore dually equivalent to the category  of finite sets.

\subsubsection{Distributive Lattices.}\label{sub:dl} Let $\mathcal{D}$ be the variety of (unbounded) distributive lattices, which is generated as a quasivariety by the two-element distributive lattice 
\[\underline{\alg{2}} = \struc{\{ 0,1 \}, \wedge, \vee}.\] 
The discrete  structure
\[\utilde{\alg{2}} := \struc{\{ 0,1 \}, 0,1, \leq, \mathcal{T}_{dis}}\]
yields a strong duality between $\mathcal{D}$ and the category $\mathcal{P}_{01} = \mathbb{I}\mathbb{S}_c\mathbb{P}(\utilde{\alg{2}})$ of \textit{bounded Priestley spaces}, \emph{i.e}, bounded ordered compact spaces $(X,0,1,\leq,\mathcal{T})$ in which  for all $x,y \in X$ with $x\not\leq y$ there is a clopen downset that contains $y$ but not  $x$. This is the formulation of the renowned \emph{Priestley duality} (see \cite{Priestley1970}) in the language of natural dualities . 

%For $\textbf{D}\in \mathcal{D}$ the dual $\textbf{D}^*$ can be identified with the set of prime ideals of $\textbf{D}$ ordered by inclusion, with a subbasis given by the sets of the form $e_D (d) = \{ I \in D^* \mid d\notin I \}$ and their complements. 

The full subcategory $\cate{D}_{\fin}$ of $\cate{D}$ consisting of finite distributive lattices is dually equivalent to the category of finite bounded posets (this discrete version of the Priestley duality is known as Birkhoff duality). The dual of a finite distributive lattice $\alg{D}$ can be equivalently constructed as the poset $\struc{J(\alg{D})_{01}, \leq_\alg{D}}$ of  the join-irreducible elements $J(\alg{D})$ of $\alg{D}$ with additional bounds $0$ and $1$, and the map $a\mapsto a{\downarrow} \cap J(\alg{D})$ is an isomorphism between $\alg{D}$ and the lattice of downsets of $\struc{J(\alg{D})_{01}, \leq_\alg{D}}$.

%{\color{red};;;;The original Birkhoff duality does not need to add bounds. One again constants are creating troubles;;;;}

\subsubsection{Median Algebras.} A \textit{median algebra} (see \cite{Birkhoff1947}) is a ternary algebra $\textbf{A} = \struc{A, \me}$ that satisfies the equations
\begin{gather*}
\me(x,x,y) = x,\\
 \me(x,y,z) = \me(y,x,z) = \me(y,z,x),\\
 \me(\me(x,y,z), v, w) =
 \me(x, \me(y,v,w), \me(z,v,w)). 
\end{gather*}

In particular, every distributive lattice $\textbf{D}\in\mathcal{D}$ yields a  median algebra $\struc{D, \me_\alg{D}}$ by stipulating
\[
\me_\alg{D} (a,b,c) = (a \wedge b) \vee (a \wedge c) \vee (b \wedge c). 
\]
It turns out that median algebras are exactly the subalgebras of median algebras $\struc{D, \me_\alg{D}}$ stemming from some $\alg{D}\in \var{D}$ (see \cite{Bandelt1983} and the references therein).

The variety of median algebras $\var{M}$ is generated as a quasivariety by the two element median algebra 
\[\underline{\alg{2}} := \struc{\{ 0,1 \}, \me},\] 
where $\me$ is the majority operation $\me(x,x,y)= \me(x,y,x) = \me(y,x,x)=x$. The discrete structure
\[
\utilde{\alg{2}} := \struc{\{ 0,1 \}, 0,1, \leq, \cdot^c, \mathcal{T}_{dis}},
\]
where $\leq$ is the natural order and $\cdot^c$ is the unary operation that swaps $0$ and $1$, is known to yield a strong duality on $\var{M}$ (see \cite{NatDual,Isbell1980,Werner1981}).
The dual category $\mathbb{I}\mathbb{S}_c\mathbb{P}(\utilde{\alg{2}})$ is the category of \emph{bounded strongly complemented Priestley spaces}, that is, bounded Priestley spaces with an order-reversing homeomorphism $\cdot^c$ which is an involution and that satisfies
$$ x\leq x^c \implies x = 0. $$ 
%{\color{red} $\mathbb{I}\mathbb{S}_c\mathbb{P}^+$ is sometimes used, and $ \mathbb{I}\mathbb{S}_c\mathbb{P}$ is sometimes used}

%In the finite case we get a strong duality between $\mathcal{M}_{fin}$ and the category of finite bounded strongly complemented posets.  

\subsubsection{MV${}_m$-algebras} Let $\struc{[0,1], \oplus, \odot, \neg, 0, 1}$ be the standard MV-algebra defined by
\[
x \oplus y=\min(1, x+y), \quad 
x \odot y= \max(0, x+y-1), \quad \neg x= 1-x.
\]
The variety $\var{MV}_m$ of MV${}_m$-algebras (where $m>0$) is defined as $\var{MV}_m:=\classop{ISP}(\underline{\lucas}_m)$ where $\underline{\lucas}_m$ is the subalgebra $\{0, \frac{1}{m}, \ldots, \frac{m-1}{m}, 1\}$ of $[0,1]$. MV${}_m$-algebras are the algebras of \L ukasiewicz $(m+1)$-valued logic. It is known (see \cite{Niederkorn2001}) that the discrete structure with unary relations
\begin{equation*}\label{eqn:duallucas}
\utilde{\lucas}_m:=\struc{\lucas_m, \{\lucas_d \mid d \in \divi(m)\}, \mathcal{T}_{dis}},
\end{equation*}
where $\divi(m)$ is the set of positive divisors of $m$, yields a logarithmic strong duality for $\var{MV}_m$.

\subsubsection{Boolean Groups.}\label{ex:boolgroups} A \emph{Boolean group} is a group $\struc{G, +, 0}$ in which every element $x \neq 0$ is of order two. The quasivariety of Boolean groups, denoted $\mathcal{BG}$, is generated by 
$$ \underline{\alg{Z}_2} = \struc{\{0,1\}, +, 0},$$
where $+$ is addition modulo $2$. The structure $\utilde{\alg{Z}_2} = \struc{\{0,1\},+,0 ,\mathcal{T}_{dis}}$
yields a full duality for $\mathcal{BG}$ (see \cite{Kerkhoff2011}). In particular, the full subcategory $\mathcal{BG}_{fin}$ of finite Boolean groups is self-dual. Since products and coproducts coincide in self-dual categories, this duality is not logarithmic.

\subsection{The Minor Relation}\label{Minor}
Let $A$ and $B$ be two nonempty sets and $n$ be a positive integer. A \textit{$n$-ary function from $A$ to $B$} is a function $f\colon A^n \rightarrow B$. The collection of all such functions is denoted by $\mathcal{F}_{AB}^{(n)}$ and the \textit{functions of several arguments from $A$ to $B$} are the elements of
\[
\mathcal{F}_{AB} := \bigcup_{n\geq 1} \mathcal{F}_{AB}^{(n)}.
\]
For $f \in \mathcal{F}_{AB}$ the \textit{arity of $f$} is the unique $n\in\N$ for which $f\in \mathcal{F}_{AB}^{(n)}$ and is denoted by $ar(f)$.
 
Every map $\tau\colon [n] \rightarrow [m]$ induces a map $\tau^A\colon A^m \rightarrow A^n$ via $\tau^A(a_1, \dots, a_m) = (a_{\tau(1)},\dots, a_{\tau(n)})$. For $g\in\mathcal{F}_{AB}^{(m)}$ and $f\in\mathcal{F}_{AB}^{(n)}$, we say that $g$ is a \textit{minor} of $f$ and write $g\preceq f$ if there is some $\tau\colon [n] \rightarrow [m]$ such that $g = f \circ \tau^A$.  
The relation $\preceq$ is a preorder on $\mathcal{F}_{AB}$ (see \cite[Subsection 2.2]{Reconstruction}) called the \emph{minor preorder}. As every preorder does, it induces an equivalence relation $\equiv$, called the \textit{minor equivalence} on $\mathcal{F}_{AB}$, given by $f \equiv g$ iff both $f\preceq g$ and $g \preceq f$. The \emph{minor order} is the partial order on $\mathcal{F}_{AB}/{\equiv}$ (well-)defined by $[g] \leq [f]$ iff $g \preceq f$. The \textit{minor $(A,B)$-poset} is given by 
$$\mbf{\mathcal{F}}_{\alg{A}\alg{B}}:=\struc{\mathcal{F}_{AB}/{\equiv} ,\leq}.$$
We will simply write $\mbf{\mathcal{F}_{\alg{A}}}$ for the minor $(A,A)$-poset (instead of $\mbf{\mathcal{F}_{\alg{A}\alg{A}}}$).   
 
For $f \in \mathcal{F}_{AB}^{(n)}$ and $i\in [n]$ we say that the $i$-th argument of $f$ is \textit{essential} if there are $a,b \in A^n$ with $a_i \neq b_i$ and $a_j = b_j$ for every $j \neq i$ such that $f(a) \neq f(b)$. Otherwise the $i$-th argument of $f$ is called \textit{inessential}. The number of essential arguments of $f$ is called the \textit{essential arity} of $f$, denoted by $\ess(f)$. By definition we always have $\ess(f) \leq ar(f)$. 

Informally, we have $g \preceq f$ if $g$ can be obtained from $f$ by permuting arguments, identifying arguments, or by adding/deleting inessential arguments. In particular, for every function $f \in \mathcal{F}_{AB}$ there is a function $f'$ equivalent to $f$ with $ar(f') = \ess(f')$. We usually choose such functions without inessential arguments as representatives for $\equiv$. If $f$ and $g$ are in $\mathcal{F}_{AB}^{(n)}$ and both have no inessential arguments, then $f\equiv g$ holds if and only if there is a permutation $\sigma\in \alg{S}_n$ with $f = g\circ \sigma^A$.

For $n\geq 2$ let $\binom{[n]}{2}$ be the family of $2$-element subsets of $[n]$. For $I = \{ i,j \} \in \binom{[n]}{2}$ with $i < j$, define the map $\delta_I \colon [n]\rightarrow [n-1]$ by

\begin{equation*}
\delta_I (k) = \begin{cases}
     k & \text{for } k < j  \\
     i & \text{for } k = j  \\
     k-1 & \text{for }  k > j.
   \end{cases}
\end{equation*}
Given $f\in \mathcal{F}_{AB}^{(n)}$, we write $f_I$ for the minor $f\circ \delta_I^A$ and  call it an \emph{identification minor} of $f$. The identification minor $f_{\{ i,j \}} \preceq f$ is the result of identifying the $i$-th and the $j$-th argument of $f$.

To introduce the next notion, recall that a \emph{multiset} is a collection of elements in which elements are allowed to appear more than once. Formally, a multiset is a pair $(M,m)$ where $M$ is a set and $m\colon M\rightarrow \mathbb{Z}^{+}$ assigns a multiplicity to each element of $M$. For example, we write $\{ a, a, a, b, b\}$ for the multiset $(\{a,b\},m)$ where $m(a) = 3$ and $m(b) = 2$. Clearly a multiset $(M,m)$ can be identified with a (regular) set if $m(x) = 1$ holds for all $x\in M$. 

For $f\in \mathcal{F}_{AB}^{(n)}$, the \emph{deck of $f$} is the multiset $deck(f) = \{ [f_I] \mid I\in \binom{[n]}{2} \}$ of all equivalence classes of identification minors of $f$. A function $g \in \mathcal{F}_{AB}^{(n)}$ is a \textit{reconstruction} of $f$ if $deck(f) = deck(g)$.

\begin{exam}
Let $f: \mathbb{Z}^3 \rightarrow \mathbb{Z}$ be given by $f(x,y,z) = x^2 + y^2 + z^2$. Then $f_{\{ 1,2 \}}(x,y) = 2x^2 + y^2$ and $f_{\{ 2,3 \}}(x,y) = x^2 + 2y^2$. Since these two identification minors only differ by a permutation of arguments, we have $f_{\{ 1,2 \}} \equiv f_{\{ 2,3 \}}$. Still, in the deck of $f$ we count the equivalence classes of these two identification minors separately, that is, $deck(f) = \{[f_{\{ 1,2 \}}],[f_{\{ 1,3 \}}], [f_{\{ 2,3 \}}] \}.$   
\end{exam}

The only case in which the deck of $f\in \mathcal{F}_{AB}^{(n)}$ can be identified with a (regular) set is if all the identification minors of $f$ are pairwise non-equivalent. As we will see later (in Proposition \ref{prop:tadam}), this is actually the case in our setting. 

A series of recent papers (see \cite{Lehtonen2014,Lehtonen2015,Lehtonen2016,Reconstruction}) deals with reconstruction properties in the following sense. 
 
\begin{defin}[{\cite{Lehtonen2014}}]\label{defn:recons}
A function $f\in \mathcal{F}_{AB}$ is \emph{reconstructible} if all of its reconstructions are equivalent. Furthermore, if $\mathcal{C}$ is a subclass of $\mathcal{F}_{AB}$ we say that 
\begin{itemize}
 \item $\mathcal{C}$ is \emph{reconstructible} if all members of $\mathcal{C}$ are reconstructible,
 \item $\mathcal{C}$ is \emph{weakly reconstructible} if for every $f\in \mathcal{C}$, all the reconstructions of $f$ which are members of $\mathcal{C}$ are equivalent, 
 \item $\mathcal{C}$ is \emph{recognizable} if all reconstructions of members of $\mathcal{C}$ are again members of $\mathcal{C}$. 
\end{itemize}  
\end{defin}  

\begin{rem}
The minor relations and reconstruction problems introduced in this section stem from corresponding relations and problems in Graph Theory, where they are topics of long-term investigation (see \cite{Diestel2017} for an introducion). 
\end{rem}

In Section 4 we show that certain classes of homomorphisms of sufficient arity are weakly reconstructible (see Theorem \ref{thm:weakly}). Before that, we `set the scene' of restricting the minor poset to homomorphisms. This is the purpose of the next section. 
            
\section{Minor Homomorphism Posets}\label{section:minor}

We begin the section by introducing the minor homomorphism posets, which are obtained by restricting the minor relations to those functions that are algebra homomorphisms. Then, we restrict our investigation of homomorphisms posets to quasivarieties $\var{A}=\classop{ISP}(\alg{M})$ for which there is a discrete structure $\utilde{M}$ that yields a logarithmic duality on $\var{A}$. We introduce co-minor relations for morphisms in the dual category $\cate{X}:=\classop{IS}_c\classop{P}$ (see Subsection \ref{sec:natdual}) and show that they correspond by duality to the minor relations for homomorphisms in $\var{A}$ (see Corollary \ref{cor:dualminor}).

This correspondence is central for the results stated in the paper. Indeed, our reconstruction result for homomorphisms (see Theorem \ref{thm:weakly}) and the structural analysis of the homomorphism posets that we carry out in Section \ref{MinorHomomorphism} are based on the correspondence between the minor and co-minor relations.

\subsection{Minors of homomorphisms}
We use the notation introduced in Subsection~\ref{Minor}. Furthermore, $\var{A}$ will always denote a category of algebras of the same type with homomorphisms. We will soon require additional assumptions for $\var{A}$ (see Assumption \ref{ass:main}) but for now we may keep this level of generality.  

\begin{defin}
For every $n\geq 1$ and every $\alg{A}, \alg{B} \in \var{A}$ we set $\homi_{\alg{A}\alg{B}}^{(n)}:=\var{A}(\alg{A}^n,\alg{B})$, and \[\homi_{\alg{A}\alg{B}}:=\bigcup_{n\geq 1} \homi^{(n)}_{\alg{A}\alg{B}}.
\]
We call $\mbf{\homi}_{\alg{A}\alg{B}}:=\struc{\homi_{\alg{A}\alg{B}}/{\equiv}, \leq}$ the \emph{minor $(\alg{A},\alg{B})$-homomorphism poset}. Instead of $\mbf{\homi}_{\alg{A}\alg{A}}$ we simply write $\mbf{\homi}_{\alg{A}}$.
\end{defin}
%We clearly have $\homi^{(n)}_{\alg{A}\alg{B}}\subseteq \func^{(n)}_{{A}{B}}$, and $\homi_{\alg{A}\alg{B}}\subseteq \func_{{A}{B}}$. 
Our first observation is that if $f \in \homi_{\alg{A}\alg{B}}$ and $g\preceq f$, then  $g\in \homi_{\alg{A}\alg{B}}$.  
\begin{lem}\label{lem:tau}
Let $\tau\colon [n] \to [m]$, and $\alg{A}, \alg{B}\in \var{A}$.
\begin{enumerate}
\item\label{it:kju01} The map $\tau^A\colon A^m \to A^n$ belongs to $\var{A}(\alg{A}^m,\alg{A}^n)$.
\item\label{it:kju02} If $f\in \homi^{(n)}_{\alg{A}\alg{B}}$ then $f\circ \tau^A \in \homi^{(m)}_{\alg{A}\alg{B}}$.
\item\label{it:kju03} If $f\in \homi_{\alg{A}\alg{B}}$ then $[f]\subseteq \homi_{\alg{A}\alg{B}}$ and $[f]{\downarrow} \subseteq \homi_{\alg{A}\alg{B}}/{\equiv}$.
\end{enumerate}
In particular, $\homi_{\alg{A}\alg{B}}/{\equiv}$ is a downset in $\struc{\func_{AB}/{\equiv}, \leq}$. 
\end{lem}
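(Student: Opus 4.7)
The plan is to handle the three items in order and then derive the ``in particular'' statement as a formality, since once item~(2) is established, items~(3) and the downset claim follow by unwinding the definitions.

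For item~(1), I would observe that $\tau^A\colon A^m \to A^n$ is built coordinate-wise from projections: writing $\pi_j\colon \alg{A}^m \to \alg{A}$ for the $j$-th projection (a homomorphism), the $i$-th coordinate of $\tau^A$ is precisely $\pi_{\tau(i)}$. Since $\alg{A}^n$ carries the product algebra structure, a map into $\alg{A}^n$ is a homomorphism iff each of its components is, so $\tau^A$ is a homomorphism. (Equivalently, one can check directly that for any $k$-ary fundamental operation symbol $g$ and tuples $a^{(1)},\dots,a^{(k)}\in A^m$, the $i$-th coordinate of $g^{\alg{A}^n}(\tau^A(a^{(1)}),\dots,\tau^A(a^{(k)}))$ equals $g^{\alg{A}}(a^{(1)}_{\tau(i)},\dots,a^{(k)}_{\tau(i)})$, which in turn is the $i$-th coordinate of $\tau^A(g^{\alg{A}^m}(a^{(1)},\dots,a^{(k)}))$.)

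Item~(2) is then immediate: $f\circ\tau^A$ is a composition of homomorphisms $\alg{A}^m \xrightarrow{\tau^A}\alg{A}^n\xrightarrow{f}\alg{B}$, and so lies in $\homi^{(m)}_{\alg{A}\alg{B}}$.

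For item~(3), let $f\in\homi_{\alg{A}\alg{B}}$ and take any $g\in\func_{AB}$ with $g\preceq f$. By definition of the minor preorder, there exists $\tau\colon[n]\to[m]$ (where $n=\mathrm{ar}(f)$ and $m=\mathrm{ar}(g)$) such that $g=f\circ\tau^A$, so item~(2) gives $g\in\homi_{\alg{A}\alg{B}}$. Specializing to $g\equiv f$ yields $[f]\subseteq\homi_{\alg{A}\alg{B}}$, and taking arbitrary $g\preceq f$ yields $[f]{\downarrow}\subseteq\homi_{\alg{A}\alg{B}}/{\equiv}$. The ``in particular'' statement is just a restatement: any element $[g]\leq[f]$ of $\func_{AB}/{\equiv}$ with $[f]\in\homi_{\alg{A}\alg{B}}/{\equiv}$ already lies in $\homi_{\alg{A}\alg{B}}/{\equiv}$.

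There is no real obstacle here; the lemma is essentially a bookkeeping check that the minor construction lives in the subcategory of homomorphisms. The only point worth being careful about is the direction of $\tau$: since $\tau\colon[n]\to[m]$ but $\tau^A\colon A^m\to A^n$, one must keep the variances straight when writing ``$g=f\circ\tau^A$'' with $f$ of arity $n$ and $g$ of arity $m$.
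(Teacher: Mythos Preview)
Your proposal is correct and takes essentially the same approach as the paper: the paper verifies item~(1) by the direct coordinate-wise computation you offer as your parenthetical alternative, and then notes that (2) and (3) follow immediately. Your primary argument via the universal property of the product (components of $\tau^A$ are projections) is a slightly cleaner packaging of the same idea, but not a substantively different route.
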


\begin{proof}
Let $O$ be a $k$-ary operation in the signature of $\alg{A}$, and $\mathbf{a}^1, \ldots, \mathbf{a}^k \in \alg{A}^m$. We obtain successively
\begin{align*}
\tau^A(O^{\alg{A}^m}(\mathbf{a}^1, \ldots, \mathbf{a}^k)) & =\tau^A (O^\alg{A}(a_1^1\dots, a_1^k),\dots , O^\alg{A}(a_m^1, \dots, a_m^k))\\
& =(O^\alg{A}(a_{\tau(1)}^1,\dots, a_{\tau(1)}^k ), \dots, O^\alg{A}(a_{\tau(n)}^1,\dots, a_{\tau(n)}^k)), \\
& = O^{\alg{A}^n}((a^1_{\tau(1)},\dots,a^1_{\tau(n)}), \dots, (a^k_{\tau(1)},\dots, a^k_{\tau(n)}))\\
& = O^{\alg{A}^n}(\tau^A(a_1^1,\dots, a^1_m), \dots, \tau^A(a_1^k, \dots, a_m^k))\\
& = O^{\alg{A}^n}(\tau^A(\mathbf{a}^1), \ldots, \tau^A(\mathbf{a}^m)),
\end{align*} 
which proves (\ref{it:kju01}). 

(\ref{it:kju02}) and (\ref{it:kju03}) immediately follow from (\ref{it:kju01}).
\end{proof}
\subsection{Dualizing the minor relation of homomorphisms}\label{subsec:dualizingminor} We show how to dualize the minor relation on $\var{A}_{\alg{A}\alg{B}}$ under the assumption that there is a logarithmic natural duality for $\var{A}$ (we use the notation of Subsection \ref{sec:natdual}). All our upcomping results about the minor relation on homomorphisms are based on Assumption \ref{ass:main}, that holds for the remainder of the paper. 

\begin{assumption}\label{ass:main}
$\var{A}$ is the quasivariety $\classop{ISP}(\underline{\alg{M}})$ generated by a finite algebra $\underline{\alg{M}}$ and the discrete structure $\utilde{\alg{M}}=\struc{M, {G}, {H}, {R}, \mathcal{T}_{dis}}$ yields a logarithmic duality on $\var{A}$. 
\end{assumption} 
We use $\cate{X}$ to denote the dual category of $\var{A}$. 
For an object $\alg{X}$ of $\mathcal{X}$ we denote by $n\textbf{X}$ the $n$-th copower of $\textbf{X}$. 
As noted after Definition \ref{defn:log}, for our purpose it is convenient to consider the carrier of $n\alg{X}$ as $n$ disjoint copies of $X{\setminus} \mathcal{C}^{\alg{X}}$ with the constants $\mathcal{C}^{n\alg{X}}$ added separately:
\[
nX = \big([n] \times (X{\setminus} \mathcal{C}^{\alg{X}})\big) \cup \mathcal{C}^{n\alg{X}}.
\]

We abbreviate $X {\setminus} \mathcal{C}^{\alg{X}}$ by $X^\flat$, and for every $i\leq n$ we will refer to the set $\{i\} \times X^\flat$ as the \emph{$i$-th copy of $X^\flat$ in $n\alg{X}$}.
 
% Now, given a homomorphism $f\colon \textbf{A}^n \rightarrow \textbf{B}$, we can look at the dual morphism $f^* \colon \textbf{B}^* \rightarrow n\textbf{A}^*$.

Given any $\alg{A}, \alg{B} \in \var{A}$, we aim to describe the dual of the minor preorder $\preceq$ on $\homi_{\alg{A}\alg{B}}$. For every $f\in \homi^{(n)}_{\alg{A}\alg{B}}$ and $g\in \homi^{(m)}_{\alg{A}\alg{B}}$ with $g\preceq f$, there is a map $\tau\colon [n] \to [m]$ such that the diagram
\begin{equation*}
\begin{tikzcd}A^m\arrow[r ,"\tau^A"]\arrow[dr, "g"]& A^n\arrow[d, "f"]\\& B\end{tikzcd}
\end{equation*}
commutes. According to Assumption \ref{ass:main} and the first statement of Lemma \ref{lem:tau}, the previous diagram is equivalent to
\begin{equation*}
\begin{tikzcd}mA^*\arrow[r , leftarrow, "(\tau^A)^*"]\arrow[dr, leftarrow, "g^*"]& nA^*\arrow[d, leftarrow, "f^*"]\\& B^*\end{tikzcd}
\end{equation*}
in the dual category $\cate{X}$. Hence, in order to translate the minor relation to $\cate{X}$, we need to characterize the dual of the map $\tau^{A}$ for $\tau\colon [n] \to [m]$. Lemma \ref{Lem1} states that   $(\tau^{A})^*$  identically maps the $i$-th copy of $\alg{A}^{*\flat}$ in $n\alg{A}^*$ to the $\tau(i)$-th copy of $\alg{A}^{*\flat}$ in $m\alg{A}^*$. 

\begin{defin}\label{defn:tautilde}
Let $\tau \colon [n] \rightarrow [m]$ and $\alg{X}\in\cate{X}$. The \emph{term-wise identity map induced by $\tau$  on $\alg{X}$ } is the map $\tau_\alg{X}\colon nX \rightarrow mX$ defined by $\tau_{\alg{X}}(c^{n\alg{X}}) = c^{m\alg{X}}$ for all $c\in\mathcal{C}$, and  
\begin{equation*}
\tau_\alg{X}(i, x) = (\tau(i) , x) \qquad \text{ for all } i\in [n], x\in X^\flat.  
\end{equation*} 
\end{defin}

\begin{lem}\label{Lem1}
Let $\tau \colon [n] \rightarrow [m]$. For every $\alg{A} \in \var{A}$ we have $(\tau^A)^*=\tau_{\alg{A}^*}$.  
\end{lem}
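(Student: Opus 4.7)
The plan is to unfold what it means for the logarithmic duality to realize $(\alg{A}^n)^*$ as the $n$-th copower $n\alg{A}^*$, and then verify the equality via a direct computation that rests on the identity $\pi_i^{(n)} \circ \tau^A = \pi_{\tau(i)}^{(m)}$, where $\pi_i^{(n)}\colon \alg{A}^n \to \alg{A}$ denotes the $i$-th projection.

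First I would make the identification $(\alg{A}^n)^* \cong n\alg{A}^*$ fully explicit. Under Assumption \ref{ass:main}, the coproduct in $\cate{X}$ is realized by the direct union, so the canonical isomorphism $\kappa_n \colon n\alg{A}^* \to (\alg{A}^n)^*$ is the unique morphism determined by the universal property that sends the $i$-th coproduct injection $\alg{A}^* \to n\alg{A}^*$ to the dual of the $i$-th projection $\pi_i^{(n)}\colon \alg{A}^n \to \alg{A}$. Since the $i$-th coproduct injection sends $x \in A^{*\flat}$ to $(i,x)$ and is the identity on constants, and since $(\pi_i^{(n)})^*(x) = x \circ \pi_i^{(n)}$ by the definition of $\cdot^*$, this forces $\kappa_n(i,x) = x \circ \pi_i^{(n)}$ and identifies $\kappa_n(c^{n\alg{A}^*})$ with the constant morphism $\alg{A}^n \to \underline{\alg{M}}$ of value $c$.

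Next I would verify the commutation $(\tau^A)^* \circ \kappa_n = \kappa_m \circ \tau_{\alg{A}^*}$. The combinatorial identity $\pi_i^{(n)} \circ \tau^A = \pi_{\tau(i)}^{(m)}$ is immediate from the definition of $\tau^A$, since both sides send $(a_1,\dots,a_m)$ to $a_{\tau(i)}$. Hence, for $(i,x)$ with $x \in A^{*\flat}$,
\begin{equation*}
(\tau^A)^*(\kappa_n(i,x)) = (x \circ \pi_i^{(n)}) \circ \tau^A = x \circ \pi_{\tau(i)}^{(m)} = \kappa_m(\tau(i),x) = \kappa_m(\tau_{\alg{A}^*}(i,x)),
\end{equation*}
and on constants both sides yield the constant morphism $\alg{A}^m \to \underline{\alg{M}}$ of the appropriate value.

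The real obstacle here is not computational but notational: the statement $(\tau^A)^* = \tau_{\alg{A}^*}$ only makes sense once the identification $(\alg{A}^n)^* \cong n\alg{A}^*$ has been pinned down, and this identification must be extracted from the universal property of the coproduct in the dual category together with the specific form the direct union takes under the logarithmic assumption. Once $\kappa_n$ has been described explicitly, the lemma collapses to the one-line identity above.
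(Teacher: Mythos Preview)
Your proposal is correct and follows essentially the same approach as the paper: the paper defines the identification $\varphi_k\colon k\alg{A}^* \to (\alg{A}^k)^*$ by $\varphi_k(i,u)=u\circ \pr_i$ (which is precisely your $\kappa_k$) and then verifies the commuting square $(\tau^A)^*\circ \varphi_n=\varphi_m \circ \tau_{\alg{A}^*}$ by evaluating both sides at $(i,u)$ and $a\in \alg{A}^m$. Your use of the identity $\pi_i^{(n)}\circ \tau^A=\pi_{\tau(i)}^{(m)}$ is exactly the content of the paper's pointwise computation, just packaged one level up.
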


\begin{proof}
By Assumption \ref{ass:main}, for every $k\geq 1$ the map $\varphi_k\colon k\alg{A}^* \to (\alg{A}^k)^*$ defined by $\varphi_k(c^{k\alg{A}^*})=c^{(\alg{A}^k)^*}$ for every $c\in \mathcal{C}$ and $\varphi_k\big((i,u)\big)=u \circ \pr_i$ for every $i\in [k]$ and $u\in\alg{A}^{*\flat}$ is an isomorphism in $\var{X}$. Hence, the statement of the lemma is equivalent to
\begin{equation}\label{eqn:bgt}
(\tau^A)^*\circ \varphi_n=\varphi_m \circ \tau_{\alg{A}^*}.
\end{equation}
Let $u\in \alg{A}^*$ and $a \in \alg{A}^m$. On the one hand, we successively obtain  
\begin{align*}
\Big(\big((\tau^{A})^*\circ\varphi_n\big) (i,u)\Big) (a) &=\big((\tau^{A})^*(u \circ \pr_i)\big)(a)\\
& = \big((u\circ \pr_i) \circ \tau^{{A}}\big)(a)\\
&= u(a_{\tau(i)}).
\end{align*}
On the other hand, we successively obtain 
\begin{align*}
\big((\varphi_m \circ \tau_{\alg{A}^*})(i,u)\big)(a) &=\varphi_m\big((\tau(i), u)\big)(a)\\
& = (u\circ \pr_{\tau(i)})(a)\\
& = u(a_{\tau(i)}).
\end{align*}
Thus we have verified identity \eqref{eqn:bgt}.  
\end{proof}

Lemma \ref{Lem1} together with the argument preceding Definition \ref{defn:tautilde} lead to the following definition.

\begin{defin} \label{cor:dualminor}
Let $\alg{X}, \alg{Y} \in \cate{X}$ and $\varphi\in\mathcal{X}(\textbf{Y}, n\textbf{X})$, $\psi \in \mathcal{X}(\textbf{Y},m\textbf{X})$. We say that $\psi$ is a \emph{co-minor} of $\varphi$, and we write $\psi \preceq_d \varphi$, if there is a map $\tau\colon [n] \rightarrow [m]$ such that $\psi= \tau_\alg{X} \circ \varphi$. 
\end{defin}
It is easy to check that $\preceq_d$ is a preorder on $\cate{X}_{\alg{Y}\alg{X}}:=\bigcup_{n\geq 1} \cate{X}(\alg{Y}, n\alg{X})$ for all $\alg{X}, \alg{Y}\in \cate{X}$, and we denote the equivalence relation associated with it by $\equiv_d$. Moreover, we denote by $\leq_d$  the partial order induced by $\preceq_d$ on $\cate{X}_{\alg{X}\alg{Y}}/{\equiv_d}$ and we set $$\mbf{\cate{X}}_{\alg{X}{\alg{Y}}}:=\struc{\cate{X}_{\alg{X}\alg{Y}}/{\equiv_d}, \leq_d}.$$  If $\varphi \in \cate{X}_{\alg{X}\alg{Y}}$, then we denote the class of $\varphi$ for $\equiv_d$ by $[\varphi]_d$. As usual, we write $\mbf{\cate{X}}_{\alg{X}}$ instead of $\mbf{\cate{X}}_{\alg{X}\alg{X}}$.

The cornerstone for our investigation of the minor homomorphism posets via duality is the following result.

\begin{cor} \label{cor:iso1}
Let $\alg{A}, \alg{B} \in \var{A}$ and $f,g\in \homi_{\alg{A}\alg{B}}$.
\begin{enumerate}
\item The map $\cdot^*\colon \struc{\homi_{\alg{A}\alg{B}}, \preceq} \to \struc{\cate{X}_{\alg{B}^*\alg{A}^*},\preceq_d}$ is an isomorphism of preorders.
\item The induced map $\cdot^*\colon \mbf{\homi}_{\alg{A}\alg{B}} \to \mbf{\cate{X}}_{\alg{B}^*\alg{A}^*}$ 
%$\cdot^*\colon \struc{\homi_{\alg{A}\alg{B}}/{\equiv}, \leq} \to \struc{\cate{X}_{\alg{B}^*\alg{A}^*}/{\equiv_d},\leq_d}$ 
defined by $[f]^*=[f^*]_d$ is a poset isomorphism.
\end{enumerate}
\end{cor}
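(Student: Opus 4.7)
The plan is to prove (1) by reducing everything to the bijectivity and functoriality of $\cdot^*$ combined with Lemma \ref{Lem1}, and then to derive (2) as a formal consequence of (1) by passing to the quotient by each preorder's associated equivalence.

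First, I would check that $\cdot^*$ does send $\homi_{\alg{A}\alg{B}}$ into $\cate{X}_{\alg{B}^*\alg{A}^*}$ while respecting the arity. Given $f \in \homi_{\alg{A}\alg{B}}^{(n)}$, the Dual Adjunction Theorem produces $f^* \in \cate{X}(\alg{B}^*, (\alg{A}^n)^*)$. Since the duality on $\var{A}$ is logarithmic by Assumption \ref{ass:main}, the isomorphism $\varphi_n\colon n\alg{A}^* \to (\alg{A}^n)^*$ used in the proof of Lemma \ref{Lem1} identifies $(\alg{A}^n)^*$ with the $n$-fold copower $n\alg{A}^*$; under this identification $f^*$ becomes a morphism in $\cate{X}(\alg{B}^*, n\alg{A}^*)$, so it belongs to $\cate{X}_{\alg{B}^*\alg{A}^*}$ with the same arity $n$.

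Second, I would verify bijectivity. The duality being strong is in particular full, so for every $n$ the map $\cdot^* \colon \var{A}(\alg{A}^n, \alg{B}) \to \cate{X}(\alg{B}^*,(\alg{A}^n)^*)$ is a bijection; composing with the isomorphism induced by $\varphi_n$ yields a bijection $\homi_{\alg{A}\alg{B}}^{(n)} \to \cate{X}(\alg{B}^*, n\alg{A}^*)$. Assembling over all $n \geq 1$ gives a bijection $\homi_{\alg{A}\alg{B}} \to \cate{X}_{\alg{B}^*\alg{A}^*}$ that preserves the arity.

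Third, I would show that the bijection is a preorder isomorphism. For $f \in \homi_{\alg{A}\alg{B}}^{(n)}$ and $g \in \homi_{\alg{A}\alg{B}}^{(m)}$, the relation $g \preceq f$ means there exists $\tau\colon [n] \to [m]$ with $g = f \circ \tau^A$. Applying the contravariant functor $\cdot^*$ and invoking Lemma \ref{Lem1} gives
\[
g^* = (f \circ \tau^A)^* = (\tau^A)^* \circ f^* = \tau_{\alg{A}^*} \circ f^*,
\]
which is exactly $g^* \preceq_d f^*$ by Definition \ref{cor:dualminor}. Conversely, if $g^* \preceq_d f^*$, then $g^* = \tau_{\alg{A}^*} \circ f^* = (\tau^A)^* \circ f^* = (f \circ \tau^A)^*$ for some $\tau\colon [n] \to [m]$, and the injectivity of $\cdot^*$ on $\var{A}(\alg{A}^m, \alg{B})$ forces $g = f \circ \tau^A$, so $g \preceq f$. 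This completes (1).

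Finally, (2) is a formality: any isomorphism $\Phi$ between two preordered sets automatically satisfies $x \equiv y \Leftrightarrow \Phi(x) \equiv_d \Phi(y)$, so it descends to a well-defined bijection on equivalence classes that preserves the induced partial orders in both directions; specializing $\Phi = \cdot^*$ gives the stated poset isomorphism $[f] \mapsto [f^*]_d$. The only subtle point in the whole argument is the compatibility between the categorical copower and the cartesian power under the duality, but that compatibility is precisely what Lemma \ref{Lem1} records, so no further obstacle remains.
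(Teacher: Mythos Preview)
Your proposal is correct and matches the paper's approach: the paper does not give an explicit proof of this corollary, treating it as an immediate consequence of Lemma~\ref{Lem1} together with the commutative-diagram discussion preceding Definition~\ref{defn:tautilde}. Your write-up simply unpacks those implicit steps (well-definedness via the logarithmic identification $(\alg{A}^n)^*\cong n\alg{A}^*$, bijectivity from fullness of the duality, and the order equivalence from Lemma~\ref{Lem1} plus injectivity of $\cdot^*$), which is exactly the intended argument.
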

To conclude this section, we recall how to recognize inessential arguments by duality from \cite{EssVar}. 

\begin{defin}
Let $\alg{X}, \alg{Y} \in \cate{X}$, let $\varphi \in \mathcal{X}(\textbf{Y}, n\textbf{X})$ and $i\in [n]$. We say that the $i$-th co-argument of $\varphi$ is \emph{essential} if
$$ \varphi (Y) \cap (\{ i \} \times X^\flat) \neq \varnothing.$$ The \emph{co-essential arity} of $\varphi$ is its number of essential co-arguments, denoted by $\ess_d(\varphi)$. 
\end{defin}

\begin{lem}[{\cite[Lemmas 3.4 and 3.9, Proposition 4.2]{EssVar}}]\label{lem:essvar}
Let $f\in \homi_{\textbf{A}\textbf{B}}^{(n)}$. For every $i\in [n]$ the following conditions are equivalent.
\begin{enumerate}[(i)]
 \item The $i$-th argument of $f$ is essential.
 \item The $i$-th co-argument of $f^*$ is essential.
\end{enumerate}
Therefore, $\ess(f) = \ess_d(f^*)$. 
\end{lem}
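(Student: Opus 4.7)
The plan is to work through the isomorphism $\varphi_n \colon nA^* \to (A^n)^*$ from the proof of Lemma \ref{Lem1} and exploit the fact that logarithmic duality forces every non-constant element of $(A^n)^*$ to factor through exactly one projection. Recall that $\varphi_n$ sends $(j,v)$ to $v \circ \pr_j$ and constants to constants; since it is an isomorphism in $\cate{X}$, every homomorphism $w \colon \alg{A}^n \to \underline{\alg{M}}$ is either constant or of the form $w = v \circ \pr_j$ for a \emph{unique} $j \in [n]$ and a unique non-constant $v \in A^{*\flat}$. Consequently, $f^*(u) = u \circ f$ lies in the $i$-th copy $\{i\} \times A^{*\flat}$ of $nA^*$ if and only if $u \circ f$ is non-constant and depends only on the $i$-th coordinate of $\alg{A}^n$.

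For (i) $\Rightarrow$ (ii), I would fix $a, b \in A^n$ witnessing essentiality of the $i$-th argument, namely $a_j = b_j$ for $j \neq i$ and $f(a) \neq f(b)$. Since homomorphisms into $\underline{\alg{M}}$ separate points of any algebra in $\classop{ISP}(\underline{\alg{M}})$, there is $u \in B^*$ with $u(f(a)) \neq u(f(b))$, so $u \circ f$ is non-constant. By the factorization recalled above, $u \circ f = v \circ \pr_j$ for some $j$ and some non-constant $v$; since $u \circ f$ distinguishes a pair agreeing off coordinate $i$, the only possibility is $j = i$. Hence $f^*(u) \in \{i\} \times A^{*\flat}$, witnessing essentiality of the $i$-th co-argument of $f^*$.

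For (ii) $\Rightarrow$ (i), assume some $u \in B^*$ satisfies $f^*(u) \in \{i\} \times A^{*\flat}$, i.e., $u \circ f = v \circ \pr_i$ with $v \in A^{*\flat}$. Pick $x, y \in A$ with $v(x) \neq v(y)$ and choose any $a, b \in A^n$ agreeing outside coordinate $i$ with $a_i = x$ and $b_i = y$. Then $(u \circ f)(a) = v(x) \neq v(y) = (u \circ f)(b)$, which forces $f(a) \neq f(b)$; thus the $i$-th argument of $f$ is essential. The equation $\ess(f) = \ess_d(f^*)$ is then immediate from the pointwise equivalence.

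The technical crux is the factorization property of non-constant homomorphisms $\alg{A}^n \to \underline{\alg{M}}$; once we grant that the carrier of $nA^* \cong (A^n)^*$ decomposes as $([n] \times A^{*\flat}) \cup \mathcal{C}^{nA^*}$, the lemma reduces to bookkeeping. I expect the main pitfall to lie precisely in keeping the identification $\varphi_n$ straight and in carefully distinguishing ``$u \circ f$ lives in the $i$-th copy'' (a genuine factorization through $\pr_i$) from the weaker ``$u \circ f$ depends on coordinate $i$''; the coincidence of these two statements in our setting is exactly what logarithmic duality buys.
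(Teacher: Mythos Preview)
The paper does not supply its own proof of this lemma; it is quoted verbatim from Kerkhoff \cite{EssVar} (Lemmas~3.4 and~3.9, Proposition~4.2 there), so there is nothing internal to compare against.

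Your argument is correct and gives a self-contained proof within the paper's framework, using only Assumption~\ref{ass:main} and the isomorphism $\varphi_n\colon n\alg{A}^* \to (\alg{A}^n)^*$ from the proof of Lemma~\ref{Lem1}. The crucial step---that every non-constant $u\circ f \in (\alg{A}^n)^*$ factors as $v\circ \pr_j$ for a \emph{unique} $j$---is exactly the content of $\varphi_n$ being a bijection onto $(\alg{A}^n)^*$, and you invoke it correctly. In (i)$\Rightarrow$(ii) the separation step (finding $u\in\alg{B}^*$ with $u(f(a))\neq u(f(b))$) uses only that $\alg{B}\in\classop{ISP}(\underline{\alg{M}})$, and the uniqueness of $j$ then forces $j=i$ since $a$ and $b$ agree off coordinate $i$. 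The converse is immediate as you wrote it.

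Compared with the cited source, which develops the result in a more general clone-theoretic context, your proof is more direct because it exploits the logarithmic hypothesis from the outset: the decomposition $nA^* = ([n]\times A^{*\flat})\cup \mathcal{C}^{nA^*}$ makes the equivalence almost a tautology. Your closing remark about the pitfall---that ``lands in the $i$-th copy'' and ``depends on coordinate $i$'' coincide only because of the logarithmic assumption---is exactly the right diagnosis of where the content lies.
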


In other words, in order to determine the essential arguments of  $f\in \homi_{\alg{A}\alg{B}}^{(n)}$ we only need to ask which copies of $\alg{A}^{*\flat}$ intersect with $f^*(\alg{B}^*)$ in $n\alg{A}^*$. 

\section{Principal Ideals and Weak Reconstructibility}\label{sect:ideals}
Inspired by \cite{MinorPosets}, we look at principal ideals $[f]{\downarrow}$ in $\mbf{\homi}_{\alg{A}\alg{B}}$ and relate them to partition lattices. It turns out that, in our setting, every such principal ideal is anti-isomorphic to the partition lattice of size $\ess(f)$ (see Proposition \ref{partition}). The deck (as introduced in Subsection \ref{Minor}) of a homomorphism $f$ forms a diverse collection which seems to carry a lot of information about $f$. This suggests that $f$ is likely to be reconstructible. Indeed, as stated in Theorem \ref{thm:weakly}, this is the case if we only consider reconstructions which are themselves homomorphisms. 
        
Let $f\in\func_{{A}{B}}^{(n)}$ and $\pi \in \Pi_n$ be a partition with $m$ blocks. For every $\ell \in [n]$, denote by $\pi^\ell$ the block of $\pi$ that contains $\ell$. Any bijective labeling $c\colon\pi \to[m]$ defines a minor $f_c:=f\circ (\hat{c}^A)$, where $\hat{c}\colon[n]\to [m]$ is defined by $\hat{c}(\ell)=c(\pi^\ell)$. Moreover, we have $f_c\equiv f_{c_0}$ for any two bijective labelings $c,c_0\colon \pi \to [m]$. This justifies the following definition.

\begin{defin}\label{defn:fpi}
Let $f\in \func_{{A}{B}}^{(n)}$ and $\pi \in \Pi_n$ with cardinality $m$. We denote by $[f_\pi]$ the equivalence class of $f_c$ for $\equiv$, where $c\colon \pi \to [m]$ is any bijective labeling of the elements of $\pi$.  If in addition $f\in \homi_{\alg{A}\alg{B}}^{(n)}$, then we denote by $[f_\pi^*]_d$ the equivalence class for $\equiv_d$ of $(f_c)^*=\hat{c}_{\alg{A}^*}\circ f^*$.
\end{defin}

It is known that for every $f\in \func_{AB}^{(n)}$, the mapping $[f_{\cdot}]\colon\Pi_n \to [f]{\downarrow}$ defined by $\pi \mapsto [f_\pi]$ is  onto and order-reversing (see \cite[Corollary 7]{MinorPosets}). It may happen that $[f_\pi]=[f_{\pi'}]$ holds for distinct $\pi$ and $\pi'$ in $\Pi_n$, and \cite{MinorPosets} is devoted to the characterization of those equivalence relations $\sim$ on $\Pi_n$ that leads to an anti-isomorphism between $\mbf{\Pi}_n/{\sim}$ and some $[f]{\downarrow}$.
Restricting the minor relation to $\homi_{\alg{A}\alg{B}}$ gives a much simpler situation, as shown in the next result.

\begin{prop}\label{partition}
For every $f\in \homi_{\textbf{A}\textbf{B}}$ the principal ideal $[f]{\downarrow}$ in $\mbf{\homi}_{\alg{A}\alg{B}}$ is anti-isomorphic to the partition lattice $\alg{\Pi_{\ess(f)}}$.        
\end{prop}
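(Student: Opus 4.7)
The plan is to pass to the dual via Corollary \ref{cor:iso1} and exhibit an explicit anti-isomorphism
\[\Phi\colon \alg{\Pi}_{\ess(f)}\longrightarrow [f^*]_d{\downarrow},\qquad \pi\longmapsto [f_\pi^*]_d,\]
where the right-hand ideal sits in $\mbf{\cate{X}}_{\alg{B}^*\alg{A}^*}$. I would start by choosing, in the class $[f]$, a representative with no inessential arguments, so that $ar(f)=\ess(f)=n$; by Lemma \ref{lem:essvar} this means $f^*(B^*)$ meets the $i$-th copy $\{i\}\times A^{*\flat}$ for every $i\in[n]$. The map $\Phi$ is well defined because two bijective labelings $c,c_0\colon \pi\to[m]$ differ by a permutation $\sigma\in\alg{S}_m$, whence $\hat c_0 = \sigma\circ \hat c$ and consequently $\hat{c_0}_{\alg{A}^*}\circ f^* = \sigma_{\alg{A}^*}\circ \hat c_{\alg{A}^*}\circ f^*$, which is $\equiv_d$-equivalent to $f_\pi^*$.

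For the order-reversing property, when $\pi\leq \pi'$ in $\alg{\Pi}_n$ every block of $\pi$ lies inside a block of $\pi'$, so given bijective labelings $c,c'$ there is a unique $\rho\colon [|\pi|]\to [|\pi'|]$ with $\rho\circ \hat c=\hat{c'}$. Using $(\rho\circ \hat c)_{\alg{A}^*}=\rho_{\alg{A}^*}\circ \hat c_{\alg{A}^*}$ (a straightforward check from Definition \ref{defn:tautilde}), one gets $f_{\pi'}^* = \rho_{\alg{A}^*}\circ f_\pi^*$, so $f_{\pi'}^*\preceq_d f_\pi^*$. For surjectivity, any element of $[f^*]_d{\downarrow}$ is represented by some $\tau_{\alg{A}^*}\circ f^*$ with $\tau\colon[n]\to[k]$. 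Essentiality of $f$ implies that the essential co-arguments of $\tau_{\alg{A}^*}\circ f^*$ are exactly those indexed by $\tau([n])$, so after deleting the inessential co-arguments I may assume $\tau$ is surjective. Letting $\pi$ be the kernel partition of $\tau$ and $c(\tau^{-1}(j))=j$, I get $\hat c=\tau$, hence the co-minor equals $f_\pi^*$.

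The main obstacle, and the step I expect to require the most care, is injectivity. Suppose $[f_\pi^*]_d=[f_{\pi'}^*]_d$. The surjectivity analysis applied to both co-minors shows that $f_\pi^*$ and $f_{\pi'}^*$ have co-essential arity equal to $|\pi|$ and $|\pi'|$ respectively, and by Lemma \ref{lem:essvar} the corresponding $f_\pi$ and $f_{\pi'}$ have no inessential arguments. The last remark of Subsection \ref{Minor} (or equivalently Corollary \ref{cor:iso1}) then forces $|\pi|=|\pi'|=:m$ and the existence of $\sigma\in\alg{S}_m$ with $f_{\pi'}= f_\pi\circ \sigma^A$, which dualizes to
\[\hat{c'}_{\alg{A}^*}\circ f^* \;=\; (\sigma\circ \hat c)_{\alg{A}^*}\circ f^*.\]
Evaluating this equality at any element of $f^*(B^*)\cap (\{i\}\times A^{*\flat})$, which is nonempty for every $i\in[n]$ by the essentiality assumption on $f$, yields $\hat{c'}(i)=\sigma(\hat c(i))$ for all $i$. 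Hence $\hat c$ and $\hat{c'}$ induce the same partition of $[n]$, i.e.\ $\pi=\pi'$. This is the place where the no-inessential-argument reduction is used in full force, and where the logarithmic structure of the duality is essential: it is precisely the identification of constants in the direct union that makes the comparison $\hat{c'}=\sigma\circ\hat c$ readable off copies of $A^{*\flat}$.
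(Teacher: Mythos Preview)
Your approach is the same as the paper's: pass to the dual, define $\Phi(\pi)=[f_\pi^*]_d$, and exploit that $f^*$ meets every copy $\{i\}\times A^{*\flat}$ to read the partition off the first coordinate. Your arguments for well-definedness, order-reversal, surjectivity, and injectivity are all correct and in fact your injectivity step (reducing to $\hat{c'}=\sigma\circ\hat c$ and evaluating at witnesses) is a slightly cleaner packaging of the paper's block-by-block comparison.

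There is, however, one genuine gap. You establish that $\Phi$ is a bijective order-reversing map, but that alone does not make $\Phi$ an anti-isomorphism: you still need $\Phi^{-1}$ to be order-reversing, i.e.\ that $[f_{\pi'}^*]_d\leq_d[f_\pi^*]_d$ forces $\pi\leq\pi'$. The paper proves this separately (by showing $\pi_1\not\geq\pi_2\Rightarrow\Phi(\pi_1)\not\leq_d\Phi(\pi_2)$). Your own machinery fills the gap immediately: if $f_{\pi'}^*\preceq_d f_\pi^*$ then, after discarding inessential co-arguments as in your surjectivity step, there is $\tau\colon[|\pi|]\to[|\pi'|]$ with
\[
\hat{c'}_{\alg{A}^*}\circ f^* \;=\; (\tau\circ\hat c)_{\alg{A}^*}\circ f^*,
\]
and evaluating at a witness in $f^*(B^*)\cap(\{i\}\times A^{*\flat})$ for each $i\in[n]$ yields $\hat{c'}(i)=\tau(\hat c(i))$. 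Thus every block of $\pi$ is contained in a block of $\pi'$, i.e.\ $\pi\leq\pi'$. Add this paragraph and your proof is complete.
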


\begin{proof}
We can assume that $f\in\homi_{\alg{A}\alg{B}}^{(n)}$, where $n$ is the essential arity of $f$. We know by Corollary \ref{cor:iso1} that $[f]{\downarrow}$ is order-isomomorpic to $[f^*]_d{\downarrow}$, and we prove that  $[f^*]_d{\downarrow}$ is anti-isomorphic to $\alg{\Pi_{\ess(f)}}$. We use the notation defined in (the paragraph preceding) Definition \ref{defn:fpi}.

Let $\phi\colon \Pi_n \rightarrow [f^*]_d{\downarrow}$ be the map defined by $\phi(\pi) = [f_\pi^*]_d$.   
%Any element $\psi\colon\alg{B}^* \to |\pi|\alg{A}^*$ of $[f_\pi^*]_d$ can be thought of as 'overlapping' the copies of $A^*{\setminus}\mathcal{C}$ whenever they belong to the same block of $\pi$. 
We have already noted that $\phi$ is an onto, order-reversing map, and now we prove that $\phi$ is one-to-one.  Let $\pi_1$ and $\pi_2$ be distinct elements of $\Pi_n$ and for $i\in \{1,2\}$ let 
\begin{equation}\label{eqn:bfr}
\psi_i:= (\hat{c}^i)_\alg{X} \circ f^*
\end{equation}
where $c^i\colon \pi_i \to [|\pi_i|]$ is an arbitrary bijective labeling of the elements of $\pi_i$. We show that $\psi_1\not \equiv_d \psi_2$. By symmetry, we may assume that there is some block $C\in \pi_1 {\setminus} \pi_2$, and we let $\ell$ be an element of $C$ and $D := \pi_2^\ell$ be the unique block of $\pi_2$ containing $\ell$. Since $C \neq D$, we may assume that there is some $k\in C{\setminus} D$ (the case $k\in D{\setminus} C$ is similar). Since $f$ has no inessential arguments, we know by Lemma \ref{lem:essvar} that the $\ell$-th and $k$-th co-arguments of $f^*$ are essential. This means that there are some $u,v\in \alg{B}^*$ such that 
\[
f^*(u)\in \{\ell\}\times \alg{A}^{*\flat} \quad \text{and} \quad f^*(v)\in \{k\}\times \alg{A}^{*\flat}.
\]
By construction, we obtain 
\[
\{\psi_1(u), \psi_1(v)\}\subseteq \{c_1(C)\}\times \alg{A}^{*\flat}, \quad \psi_2(u)\in \{c_2(D)\}\times \alg{A}^{*\flat}, \quad  \psi_2(v)\notin \{c_2(D)\}\times \alg{A}^{*\flat},
\]
which shows that $\psi_1$ maps $u$ and $v$ into the same copy of $\alg{A}^{*\flat}$ in $|\pi_1|\alg{A}^{*}$, while $\psi_2$ maps $u$ and $v$ into two different copies of  $\alg{A}^{*\flat}$ in $|\pi_2|\alg{A}^*$. We conclude that $\psi_1 \not\equiv_d \psi_2$.
%Let $\pi_1$ and $\pi_2$ be distinct elements of $\Pi_n$, and $\psi_i$ be an element of $[f^*_{\pi_i}]_d$ for $i\in \{1,2\}$. We show that $\psi_1\not \equiv_d \psi_2$ . By symmetry, we may assume there is some block $C\in \pi_1 {\setminus} \pi_2$, and we let $i$ be an element of $C$ and $D$ be the unique block of $\pi_2$ that contains $i$. Since $C \neq D$, we may assume that there is some $j\in C{\setminus} D$ (the case $j\in D{\setminus} C$ is similar). Now, the sets 
%\begin{equation*}
%X_i:=(f^*)^{-1}(\{i\}\times \alg{A}^{*\flat})\quad \text{ and } \quad X_j:= (f^*)^{-1}(\{j\}\times \alg{A}^{*\flat}),
%\end{equation*} 
%are both nonempty since $f^*$ has no inessential arguments.  
%Since $i$ and $j$ belong to the same block $C$ of $\pi_1$ but to distinct blocks of $\pi_2$, we have that $\psi_1(X_i)$ and $\psi_1(X_j)$ are subsets of the \emph{same} copy of $\alg{A}^{*\flat}$ in $|\pi|\alg{A}^*$, while $\psi_1(X_i)$ and $\psi_1(X_j)$ are in \emph{distinct} copies of $\alg{A}^{*\flat}$ in $|\pi|\alg{A}^*$. It follows that $\psi_1 \not\equiv_d \psi_2$, that is, $\varphi(\pi_1)\neq \varphi(\pi_2)$.

It remains to show that $\varphi^{-1}$ is order-reversing. Let $\pi_1, \pi_2\in \Pi_n$ such that $\pi_1 \not\geq \pi_2$ and show that $\varphi(\pi_1) \not\leq_d \varphi(\pi_2)$. Let $C$ be a block of $\pi_2$ which is not contained in any block of $\pi_1$. There are two distinct elements $i,j\in C$ which belong to two distinct blocks of $\pi_1$. For every $i\in\{1,2\}$, denote by $c_i$ a bijective labeling $c_i\colon \pi_i \to [|\pi_i|]$, and let $\psi_i \in [f_{\pi_i}]$ be defined as in \eqref{eqn:bfr}. By a similar argument as in the first part of the proof, we can find  $u,v\in \alg{B}^*$ such that $\psi_2$ maps $u$ and $v$ into the same copy of $\alg{A}^{*\flat}$ in $|\pi_2|\alg{A}^*$, while $\psi_1$ maps $u$ and $v$ into two distinct copies of $\alg{A}^{*\flat}$ in $|\pi_1|\alg{A}^*$. This shows that $\psi_1\not \preceq_d \psi_2$, and therefore $\varphi(\pi_1)\not \leq_d \varphi(\pi_2)$ as desired.  
\end{proof}

Recall that the \emph{arity gap} of $f\in \func_{AB}$ is defined as the minimum difference between that the essential arity of $f$ and of that of an identification minor of $f$. We retrieve the following result, which is a special instance of {\cite[Proposition 3.13]{EssVar}, as a consequence of Proposition \ref{partition}.

\begin{cor}\label{cor:tidim}
For every $\alg{A}, \alg{B} \in \var{A}$ and $n>1$, the arity gap of $f\in \homi_{\alg{A}\alg{B}}^{(n)}$ is one.
\end{cor}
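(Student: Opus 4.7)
The plan is to derive the corollary directly from Proposition \ref{partition} combined with the dual characterization of essential arguments given by Lemma \ref{lem:essvar}. First, I would replace $f$ by a representative of $[f]$ with no inessential arguments, so that $ar(f)=\ess(f)=n$. For any $I=\{i,j\}\in\binom{[n]}{2}$, the identification minor $f_I$ has arity $n-1$, hence $\ess(f_I)\leq n-1$; this already yields $\ess(f)-\ess(f_I)\geq 1$, so the arity gap is at least one.

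The main work lies in showing that this bound is attained, i.e.\ that $\ess(f_I)=n-1$ for some (in fact every) $I$. By Lemma \ref{Lem1}, the dual of $f_I=f\circ\delta_I^A$ equals $(\delta_I)_{\alg{A}^*}\circ f^*$, and by Definition \ref{defn:tautilde} the term-wise identity $(\delta_I)_{\alg{A}^*}$ maps the $k$-th copy of $\alg{A}^{*\flat}$ in $n\alg{A}^*$ onto the $\delta_I(k)$-th copy in $(n-1)\alg{A}^*$. Since $\ess(f)=n$, Lemma \ref{lem:essvar} tells us that $f^*(\alg{B}^*)$ meets each of the $n$ copies of $\alg{A}^{*\flat}$. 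Because $\delta_I\colon[n]\to[n-1]$ is surjective, $(\delta_I)_{\alg{A}^*}(f^*(\alg{B}^*))$ then meets every copy of $\alg{A}^{*\flat}$ in $(n-1)\alg{A}^*$; applying Lemma \ref{lem:essvar} in the reverse direction, $(f_I)^*$ has co-essential arity $n-1$, whence $\ess(f_I)=n-1$ and $\ess(f)-\ess(f_I)=1$.

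I do not anticipate a genuine obstacle here; the corollary is essentially a bookkeeping consequence of the dual dictionary already in place. The conceptual content is that, under the anti-isomorphism with $\alg{\Pi}_n$ supplied by Proposition \ref{partition}, identification minors correspond precisely to the atoms of the partition lattice (one block of size two, the rest singletons), so they sit exactly one step above the bottom; Lemma \ref{lem:essvar} converts this one-step drop in the partition lattice into a drop of exactly one in the essential arity.
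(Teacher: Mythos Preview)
Your argument is correct and is essentially the route the paper has in mind: the paper just records the corollary as a consequence of Proposition~\ref{partition}, and the computation you spell out via Lemma~\ref{lem:essvar} together with the surjectivity of $\delta_I$ is exactly what makes that consequence work (your final paragraph states the connection to the partition-lattice picture accurately).

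One small wording issue worth tightening: the opening move ``replace $f$ by a representative of $[f]$ with no inessential arguments, so that $ar(f)=\ess(f)=n$'' is not a legitimate reduction as written. Passing to such a representative may lower the arity below $n$, and the arity gap is \emph{not} invariant under $\equiv$: if $f$ has an inessential argument $i$, then $f_{\{i,j\}}$ has the same essential arity as $f$, giving arity gap $0$. The corollary should therefore be read under the implicit hypothesis $\ess(f)=n>1$ (as in the cited \cite[Proposition~3.13]{EssVar}); with that hypothesis in place your proof goes through verbatim and no replacement step is needed.
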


Functions $f\in \func_{{A}{B}}$ that have a unique identification minor have received special interest and have been studied in relation with their invariance group (see \cite{Lehtonen2016, Lehtonen2014}). Recall that the invariance group $\Inv(f)$ of $f\in \func_{{A}{B}}^{(n)}$ is defined to be the subgroup of $\alg{S}_n$ given by $\{\sigma\in \alg{S}_n \mid f=f\circ \sigma^A\}$. If $f\in\homi_{\alg{A}\alg{B}}^{(n)}$, we obtain the following result.

\begin{prop}\label{prop:tadam}
Let $f\in \homi_{\alg{A}\alg{B}}^{(n)}$ with $\ess(f)=n$.
\begin{enumerate}
\item\label{it:rop01} $f$ has $\binom{n}{2}$ pairwise non-equivalent identification minors.
\item\label{it:rop02} $\Inv(f)$ is trivial.
\end{enumerate}
\end{prop}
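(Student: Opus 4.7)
The plan is to deduce both statements as consequences of Proposition \ref{partition}, sometimes supplemented by the duality translation via Lemma \ref{Lem1} and Lemma \ref{lem:essvar}.

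For part (\ref{it:rop01}), I will note that each identification minor $f_I$ (for $I=\{i,j\}\in\binom{[n]}{2}$) is exactly $f_{\pi_I}$ in the sense of Definition \ref{defn:fpi}, where $\pi_I$ is the partition of $[n]$ whose unique non-singleton block is $I$. The $\binom{n}{2}$ partitions $\pi_I$ are pairwise distinct elements of $\Pi_n$. Since $\ess(f)=n$, Proposition \ref{partition} supplies an order-reversing bijection $\Pi_n \to [f]{\downarrow}$ sending $\pi$ to $[f_\pi]$, so these $\binom{n}{2}$ partitions produce $\binom{n}{2}$ distinct classes $[f_I]$ in $\mbf{\homi}_{\alg{A}\alg{B}}$. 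That is exactly the non-equivalence claim.

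For part (\ref{it:rop02}), I will work on the dual side. Suppose $\sigma\in\Inv(f)$, so $f=f\circ\sigma^A$. Applying the contravariant functor $\cdot^*$ and using Lemma \ref{Lem1}, this becomes $f^*=\sigma_{\alg{A}^*}\circ f^*$ in $\cate{X}$. By Lemma \ref{lem:essvar}, the assumption $\ess(f)=n$ means every co-argument of $f^*$ is essential: for each $i\in[n]$ there exists $u_i\in\alg{B}^*$ with $f^*(u_i)=(i,x_i)$ for some $x_i\in\alg{A}^{*\flat}$. Evaluating the identity $\sigma_{\alg{A}^*}\circ f^*=f^*$ at $u_i$ gives $(\sigma(i),x_i)=(i,x_i)$, whence $\sigma(i)=i$. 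Since $i$ was arbitrary, $\sigma=\id$ and $\Inv(f)$ is trivial.

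Neither step looks like a serious obstacle: part (\ref{it:rop01}) is a direct bookkeeping consequence of the anti-isomorphism already established, and part (\ref{it:rop02}) is a short dualization argument that just exploits essentiality of every co-argument. The only point worth being careful about is making sure the identification $f_I=f_{\pi_I}$ (up to the choice of bijective labeling in Definition \ref{defn:fpi}) is transparent, so that Proposition \ref{partition} applies without further adjustment.
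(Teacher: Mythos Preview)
Your proposal is correct and follows essentially the same approach as the paper: part (\ref{it:rop01}) is deduced directly from the anti-isomorphism of Proposition \ref{partition}, and part (\ref{it:rop02}) is obtained by dualizing the equation $f=f\circ\sigma^A$ via Lemma \ref{Lem1} and then using that every co-argument of $f^*$ is essential to force $\sigma=\id$. Your write-up simply makes explicit the final step that the paper leaves as ``commutes if and only if $\sigma=\id_n$ since $\ess(f)=n$''.
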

\begin{proof}
(\ref{it:rop01}) is a direct consequence of Proposition \ref{partition}.

(\ref{it:rop02}) An element $\sigma\in \alg{S}_n$ belongs to $\Inv(f)$ if and only if the diagram
\begin{equation}\label{eqn:cominv}
\begin{tikzcd}[column sep=small]& A^n\arrow[dl, "\sigma^A"']\arrow[dr,"f"] & \\A^n\arrow[rr, "f"] && B\end{tikzcd}
\end{equation}
commutes. Diagram \eqref{eqn:cominv} is, by Assumption \ref{ass:main}, equivalent  to
\begin{equation}
\begin{tikzcd}[column sep=small]& nA^*\arrow[dl, leftarrow, "\sigma_{\alg{A}^*}"']\arrow[dr,leftarrow, "f^*"] & \\nA^*\arrow[rr, leftarrow, "f^*"] && B^*\end{tikzcd}
\end{equation}
which commutes if and only if $\sigma=\id_n$ since $\ess(f)=n$.
\end{proof}

In the following example, we show how Proposition \ref{partition} may fail if we weaken Assumption \ref{ass:main} by assuming that $\utilde{\alg{M}}$ yields a duality which is not logarithmic.

\begin{exam} Let $\var{BG}$ be the variety of Boolean groups and $\underline{\alg{Z}_2}=\struc{\{0,1\}, +, 0}$ as in \ref{ex:boolgroups}. The homomorphism $f\in\var{BG}_{\underline{\alg{Z}}}^{(3)}$ given by $f(x,y,z)=x+y+z$ has essential arity $3$. But $f$ has an unique identification minor (namely, the identity map) and arity gap $2$. 
\end{exam}

Proposition \ref{partition} shows that the deck of any $f\in \homi_{\alg{A}\alg{B}}$  could not actually be richer (it does not contain any duplicates). This observation leads us to the investigation of reconstructibility properties for homomorphisms, as  introduced in Section \ref{Minor}. 

As explained in  \cite{Reconstruction}, the class $\homi_{\alg{A}\alg{B}}$ cannot be reconstructible. Indeed, let $\alg{A}, \alg{B}\in \var{A}$ and $n<|A|$. Let $A_{\neq}^n \subseteq A^n$ consists of all elements of $A^n$ which are injective on $[n]$ (i.e. the tuples in which no entry appears more than once). If $f\in\homi_{\alg{A}\alg{B}}$ has arity $n$, then any map $g\colon A^n \to B$ which is equal to $f$ on $A^n{\setminus} A^n_{\neq}$ satisfies $\deck(f)=\deck(g)$, which shows that $f$ is not reconstructible. However, such a map $g$ is unlikely to still belong to $\homi_{\alg{A}\alg{B}}$, which naturally leads to the weak reconstruction problem for $\homi_{\alg{A}\alg{B}}$.

%{\color{red}What can be said for the reconstructibility problem for $\homi_{\alg{A}\alg{B}}^{>n}$ for $n\geq |A|$? }

\begin{thm}\label{thm:weakly}
Let $\alg{A}, \alg{B}\in \var{A}$. The subclass $\homi_{\textbf{A}\textbf{B}}^{>2}$ of homomorphisms in $\homi_{\alg{A}\alg{B}}$ of essential arity strictly greater than $2$ is weakly reconstructible.  
\end{thm}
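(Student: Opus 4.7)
The plan is to pass through Corollary \ref{cor:iso1} and identify a complete invariant of each class $[f^*]_d$ from which the condition $\deck(f)=\deck(g)$ can be decoded. Because the duality is logarithmic, $n\alg{A}^*$ is the direct union and all of its structure (operations, partial operations, relations) acts within each copy; hence the codiagonal $\nabla\colon n\alg{A}^*\to\alg{A}^*$ that sends $(i,x)\mapsto x$ on each copy of $\alg{A}^{*\flat}$ and fixes constants is a morphism in $\cate{X}$. To every homomorphism $f\in\homi_{\alg{A}\alg{B}}^{(n)}$ I associate the pair $(\hat{f},\mathcal{P}^f)$, where $\hat{f}:=\nabla\circ f^*\colon\alg{B}^*\to\alg{A}^*$ and $\mathcal{P}^f$ is the \emph{unordered} partition of $\alg{B}^*\setminus\hat{f}^{-1}(\mathcal{C}^{\alg{A}^*})$ by the fibers $P_i^f:=(f^*)^{-1}(\{i\}\times\alg{A}^{*\flat})$. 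Every $\psi\equiv_d f^*$ is of the form $\sigma_{\alg{A}^*}\circ f^*$ for some $\sigma\in\alg{S}_n$ (a permutation of copies fixing constants), so the pair $(\hat{f},\mathcal{P}^f)$ is an invariant of $[f^*]_d$; conversely, any labeling of $\mathcal{P}^f$ by $[n]$ recovers a representative of $[f^*]_d$, so this pair is a complete invariant.

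Working with representatives satisfying $ar=\ess=n>2$, the first task is to check that any reconstruction $g\in\homi^{>2}_{\alg{A}\alg{B}}$ also has $\ess(g)=n$. Indeed, if the $k$-th argument of $g$ were inessential, then for every $i\neq k$ the identification minor $g_{\{i,k\}}$ equals $g$ with the $k$-th argument suppressed, so all $n-1$ such minors collapse into a single class; this would force $\deck(g)$ to have strictly fewer than $\binom{n}{2}$ distinct classes, contradicting the pairwise non-equivalence of $\deck(f)$ guaranteed by Proposition \ref{prop:tadam}. So $\ess(g)=n$, and in particular every $P_i^g$ is non-empty by Lemma \ref{lem:essvar}. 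For any $I=\{i,j\}$, the co-minor $f_I^*=(\delta_I)_{\alg{A}^*}\circ f^*$ satisfies $\nabla\circ f_I^*=\hat{f}$ and has fiber partition $\mathcal{P}^{f_I}$ obtained from $\mathcal{P}^f$ by merging $P_i^f$ and $P_j^f$. Hence an equivalence $f_I\equiv g_{J(I)}$ immediately yields $\hat{f}=\hat{g}$, and $\deck(f)=\deck(g)$ together with pairwise non-equivalence produces a bijection $\phi\colon\binom{[n]}{2}\to\binom{[n]}{2}$ with $\mathcal{P}^f_I=\mathcal{P}^g_{\phi(I)}$ as unordered $(n-1)$-block partitions for every $I$.

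The final step, which is purely combinatorial and is the main obstacle, reconstructs the unordered partition $\mathcal{P}^f$ from this deck of merges. I claim that for $n\geq 3$, two points $x,y$ lie in the same block of $\mathcal{P}^f$ iff they lie in the same block of \emph{every} merge-partition $\mathcal{P}^f_I$. The forward direction is immediate. For the converse, if $x\in P_k^f$ and $y\in P_l^f$ with $k\neq l$, then because $n\geq 3$ there exists $m\in[n]\setminus\{k,l\}$, and for $I=\{l,m\}$ the block $P_k^f$ survives intact, separating $x\in P_k^f$ from $y\in P_l^f\cup P_m^f$. Hence the equivalence relation associated to $\mathcal{P}^f$ equals the one determined by the family $\{\mathcal{P}^f_I\}_I$, which via $\phi$ equals $\{\mathcal{P}^g_J\}_J$ and hence the equivalence relation of $\mathcal{P}^g$, giving $\mathcal{P}^f=\mathcal{P}^g$. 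Combined with $\hat{f}=\hat{g}$, the complete invariants coincide and $f\equiv g$ by Corollary \ref{cor:iso1}. The hypothesis $n>2$ is essential here: for $n=2$ the unique merge-partition is the universal one, the separation principle becomes vacuous, and the argument breaks down exactly where the theorem does.
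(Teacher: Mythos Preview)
Your proof is correct and follows a genuinely different, more conceptual route than the paper's. The paper proceeds by contradiction: it first shows (in different language) that $\hat f=\hat g$ and then, assuming the unordered fiber partitions differ, hunts down a specific $I=\{i,k\}$ for which $[g_I]$ cannot appear in $\deck(f)$, via a short case analysis on how a block $X_i$ overlaps some $Y_j$. Your argument instead packages the dual data into the explicit complete invariant $(\hat f,\mathcal{P}^f)$ and reduces the whole question to a clean combinatorial fact: for $n\geq 3$, an $n$-block partition is the common refinement of all its $\binom{n}{2}$ pairwise merges. This makes the use of the hypothesis $n>2$ completely transparent and avoids the paper's case split. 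You are also more careful than the paper about why a reconstruction $g\in\homi_{\alg{A}\alg{B}}^{>2}$ of arity $n$ must itself have $\ess(g)=n$; the paper simply writes ``we may assume'' this, whereas you derive it from Proposition~\ref{prop:tadam}. The codiagonal $\nabla$ you introduce is just the categorical fold map on the coproduct $n\alg{A}^*$ and is automatically an $\cate X$-morphism, so that part of your setup is entirely legitimate. The trade-off is that your proof requires setting up the invariant machinery up front, while the paper's ad~hoc contradiction is shorter to state once one is willing to accept the ``we may assume'' step; conceptually, however, your approach is cleaner and the invariant $(\hat f,\mathcal{P}^f)$ is exactly what later results such as Lemma~\ref{CommonMinor} and Theorem~\ref{MorphismsCharacterization} are implicitly exploiting.
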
 

\begin{proof}
We proceed by contradiction, assuming that there are $f,g\in \homi_{\alg{A}\alg{B}}^{(n)}$ for some $n>2$ with  $\deck(f)=\deck(g)$ but $f\not\equiv g$. We may assume that $f$ and $g$ both have no inessential arguments. Due to Assumption \ref{ass:main} and Lemma \ref{lem:essvar}, this means that  every co-argument of $f^*$ and $g^*$ is essential and $f^*\not\equiv_d g^*$.

%First, we show that if $u\in\alg{B}^*$ satisfies $f^*(u)\not\in \mathcal{C}^{n\alg{A}^*}$ then $g^*(u)\not\in \mathcal{C}^{n\alg{A}^*}$ and $\pr_2(f^*(u))=\pr_2(g^*(u))$.

First we show $(f^*)^{-1}(\mathcal{C}^{n\alg{A}^*}) = (g^*)^{-1}(\mathcal{C}^{n\alg{A}^*})$ and for all $u\in \alg{B}^* {\setminus}(f^*)^{-1}(\mathcal{C}^{n\alg{A}^*})$ we have that $\pr_2(f^*(u)) = \pr_2(g^*(u))$. For the sake of contradiction, assume that there is some element $u\in \alg{B}^*$ with $f^* (u)\in\mathcal{C}^{n\alg{A}^*}$ and $g^*(u)\notin\mathcal{C}^{n\alg{A}^*}$ (the case $f^*(u),g^*(u)\not\in \mathcal{C}^{n\alg{A}^*}$ and $\pr_2(f^*(u)) \neq \pr_2(g^*(u))$ is dealt with similarly). 
%In particular, we have $h^*(u)\notin\mathcal{C}^{n\alg{A}^*}$ for every $h$ such that $h\preceq g$. 
Let $I\in \binom{n}{2}$. We have 
\[(f\circ \delta_I^A)^*(u)=((\delta_I)_{\alg{A}^*}\circ f^*)(u) \text{ belongs to } \mathcal{C}^{(n-1)\alg{A}^*}\]
 while 
 \[
 (g\circ \delta_J^A)^*(u)=((\delta_J)_{\alg{A}^*}\circ g^*)(u) \text{ does not belong to } \mathcal{C}^{(n-1)\alg{A}^*},
 \] 
 for every $J\in \binom{n}{2}$. It follows that the equivalence class of $f\circ \delta_I^A$ belongs to $\deck(f)$ but not to $\deck(g)$, a contradiction.
 
 Then, since $f^*\not\equiv_d g^*$ there is some $u\in\alg{B}^*$ such that $\pr_1(f^*(u))\neq \pr_1(g^*(u))$. For every $i\in [n]$, set
\[
X_i:=(f^*)^{-1}(\{i\} \times \alg{A}^{*\flat}) \quad \text{ and } \quad Y_i:=(g^*)^{-1}(\{i\}\times \alg{A}^{*\flat}).
\] 
We have proved that 
\begin{equation}\label{eqn:partition}
X:=\{X_i \mid i \in [n]\}\quad \text{ and }\quad Y:=\{Y_i \mid i \in [n]\}
\end{equation}
 are distinct partitions of $\alg{B}^*{\setminus} (f^{*})^{-1}(\mathcal{C}^{n\alg{A}^*})$. Without loss of generality, we may assume that there is some $i\in [n]$ with $X_i\not\in Y$. Let $u$ be an element of $X_i$, and let $Y_j$ be the block of $Y$ that contains $u$. We may assume that $X_i \not\subseteq Y_j$  (the case $Y_j \not\subseteq X_i$ is dealt with
 similarly) and we let $v$ be an element of $X_i{\setminus} Y_j$, and $Y_k$ be the block of $Y$ that contains $v$. Since $n>2$, either $i\neq j$ or $i\neq k$. If $i\neq j$ (the case $i\neq k$ can be dealt with similarly), then $(g\circ \delta_{\{i,k\}}^A)^*(u)$ and $(g\circ \delta_{\{i,k\}}^A)^*(v)$ belong to the $\delta_{\{i,k\}}(j)$-th copy and $\delta_{\{i,k\}}(k)$-th copy  of $\alg{A}^{*\flat}$ in $(n-1)\alg{A}^*$, respectively, and $\delta_{\{i,k\}}(j)\neq\delta_{\{i,k\}}(k)$. On the other hand, for every $I\in \binom{n}{2}$, the map $(f\circ \delta_{I}^A)^*$ maps $u$ and $v$ to the $\delta_I(i)$-th copy of $\alg{A}^{*\flat}$ in $(n-1)\alg{A}^*$. Therefore the equivalence class of $g\circ \delta_{\{i,k\}}^A$ belongs to $\deck(g)$ but not to $\deck(f)$, a contradiction.
\end{proof}

Theorem \ref{thm:weakly} also shows that $\homi_{\alg{A}\alg{B}}^{>2}$ is not recognizable (see Definition \ref{defn:recons}) if it is nonempty and $|A|> 2$. Indeed, we already noted that a homomorphism $f\colon \alg{A}^n \rightarrow \alg{B}$ with $n\leq |A|$ is not reconstructible, while now we showed that all its reconstructions which are homomorphisms are equivalent. Therefore, there needs to be a reconstruction of $f$ which is not a homomorphism.  
\par{\smallskip}  

The first part of the proof of Theorem 4.6 shows that if two $\cate{X}$-morphisms $\phi, \psi \colon \alg{Y} \to n \alg{X}$ have a common minor, then $\phi$ and $\psi$ might map an element $x\in Y$ into two different copies of $\alg{X}$ in $n\alg{X}$, but never to different elements. We generalize this proof in the following lemma.       

\begin{lem}\label{CommonMinor}
Let $\varphi , \psi\colon \alg{Y} \rightarrow n\alg{X}$ be two morphisms such that $[\varphi]_d{\downarrow} \cap [\psi]_d{\downarrow} \neq \varnothing$. Then $\varphi^{-1}(\mathcal{C}^{n\alg{X}}) = \psi^{-1}(\mathcal{C}^{n\alg{X}}) =: P$ and for all $y \in Y{\setminus} P$ we have $\pr_2(\varphi(y)) = \pr_2(\psi(y))$.   
\end{lem}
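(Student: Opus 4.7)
The starting point is to unpack the hypothesis: since $[\varphi]_d{\downarrow} \cap [\psi]_d{\downarrow}$ is nonempty, there is some morphism $\chi \colon \alg{Y} \to m\alg{X}$ (for some $m \geq 1$) together with maps $\tau, \sigma \colon [n] \to [m]$ such that
\[
\chi = \tau_\alg{X} \circ \varphi = \sigma_\alg{X} \circ \psi.
\]
This single common factorisation will be the workhorse of the proof.

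Next I would read off from Definition \ref{defn:tautilde} exactly how $\tau_\alg{X}$ treats constants and non-constants, and record the key dichotomy: $\tau_\alg{X}$ sends $c^{n\alg{X}}$ to $c^{m\alg{X}} \in \mathcal{C}^{m\alg{X}}$, whereas it sends any element $(i,x)$ with $x \in X^\flat$ to $(\tau(i),x)$, which is not a constant. The same holds for $\sigma_\alg{X}$. Therefore, for every $y \in Y$,
\[
\varphi(y) \in \mathcal{C}^{n\alg{X}} \iff \chi(y) \in \mathcal{C}^{m\alg{X}} \iff \psi(y) \in \mathcal{C}^{n\alg{X}},
\]
which immediately yields $\varphi^{-1}(\mathcal{C}^{n\alg{X}}) = \psi^{-1}(\mathcal{C}^{n\alg{X}}) =: P$.

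Finally, to establish the second component equality, I would take $y \in Y \setminus P$ and write $\varphi(y) = (i,x)$ and $\psi(y) = (j,x')$ with $x, x' \in X^\flat$ and $i, j \in [n]$. Applying the two factorisations of $\chi$ gives
\[
(\tau(i), x) = \tau_\alg{X}(\varphi(y)) = \chi(y) = \sigma_\alg{X}(\psi(y)) = (\sigma(j), x'),
\]
so comparison of second coordinates forces $x = x'$, i.e. $\pr_2(\varphi(y)) = \pr_2(\psi(y))$. There is no real obstacle here beyond being careful with the convention on the carrier of $n\alg{X}$ laid down after Definition \ref{defn:log}; everything else is a direct consequence of the very definition of the term-wise identity maps $\tau_\alg{X}$ and $\sigma_\alg{X}$.
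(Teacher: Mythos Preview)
Your proof is correct and follows essentially the same route as the paper: pick a common co-minor $\chi$ with factorisations $\chi = \tau_{\alg{X}}\circ\varphi = \sigma_{\alg{X}}\circ\psi$, then use the defining behaviour of the term-wise identity maps on constants versus non-constants to obtain both conclusions. The paper's argument is the same up to notation (it writes $\mu$ and $\tau,\tau'$ where you write $\chi$ and $\tau,\sigma$).
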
  

\begin{proof}
Let $\mu\colon \alg{Y} \to m\alg{X}$ be a morphism such that $\mu \preceq_d \varphi$ and $\mu \preceq_d \psi$.  By Proposition \ref{partition}, there are $\tau, \tau'\colon [n]\to [m]$ such that $\mu=\tau_\alg{X}\circ \varphi=\tau'_\alg{X}\circ \psi$. In particular, for every $y\in \alg{Y}$, we have the following equivalences
\[
\varphi(y)\in \mathcal{C}^{n\alg{X}} \iff \mu(y)\in \mathcal{C}^{m\alg{X}}\iff  \psi(y)\in  \mathcal{C}^{n\alg{X}}.
\]
Furthermore, for $y\in Y{\setminus} P$ we have 
\begin{align*}
\varphi(y) = (i, x) & \iff \mu(y) = (\tau(i), x) \\
\psi(y)= (j, x)& \iff \mu(y) = (\tau'(j), x),
\end{align*}
from which we get the second part of the statement.
\end{proof} 

\section{Characterizing the minor homomorphism poset}\label{MinorHomomorphism}
Proposition \ref{partition} and Lemma \ref{CommonMinor} pave the way to a complete description of the posets $\mbf{\homi_{\alg{A}\alg{B}}}$ in terms of partition lattices. Prior to developing the general tools to characterize these posets, we look at a few examples to get a better understanding of how they are influenced by the structure defined by $\utilde{\alg{M}}$. We use the vocabulary and notation introduced in Section \ref{Examples}, in particular, $\mathcal{B}$ and $\mathcal{D}$ denote the variety of Boolean algebras and unbounded distributive lattices, respectively.

\begin{exam}\label{ex:boolfin}
\emph{The minor homomorphism poset $\mbf{\var{B}}_{\alg{2}^k}$ of the finite Boolean algebra $\alg{2}^k$ consists of $k^k$ disjoint copies of the order dual of the $k$-th partition lattice:}
\[\mbf{\var{B}}_{\alg{2^k}}\simeq \biguplus_{1\leq i\leq k^k} \alg{\Pi}_k^\partial.\]
\begin{proof}
Recall that $[k] = \{ 1,\dots, k \}$ is the dual of $\alg{2}^k$ under Stone duality. We first note that a homomorphism $f\colon (\alg{2}^k)^n \rightarrow \alg{2^k}$ satisfies $\ess(f)\leq k$, since $\im(f^*)$ can meet at most $k$ copies of $(\alg{2^k})^*$ in $n(\alg{2^k})^*$. So, maximal elements $[\varphi]_d$ of $\mbf{\cate{X}}_{(\alg{2^k})^*}$ are represented by maps $\varphi\colon [k] \rightarrow k[k]$ with $\ess_d(\varphi) = k$. If $\varphi$ and $\psi$ are two such maps, then $\varphi \equiv_d \psi$ if and only if $\pr_2 \circ \varphi =\pr_2\circ \psi$. Thus, there are $k^k$  maximal classes in $\mbf{\cate{X}}_{(\alg{2^k})^*}$. For each maximal class $[\varphi]_d$, the corresponding class $[\varphi_*]$ is maximal in $\mbf{\var{B}}_{\alg{2^k}}$. We know by Proposition~\ref{partition} that $[\varphi_*]{\downarrow} \simeq \alg{\Pi}_k^\partial$. Moreover, for two maximal elements $[\varphi]_d \neq [\psi]_d$, we have $\pr_2\circ\varphi \neq \pr_2\circ\psi$, so $[\varphi]_d{\downarrow} \cap [\psi]_d{\downarrow} =\varnothing$ holds by Lemma \ref{CommonMinor}.
\end{proof} 
\end{exam}

\begin{rem}
By extending the argument of Example \ref{ex:boolfin},  one can show that  
\[ 
\mbf{\var{B}}_{\alg{2^\ell},\alg{2^k}} \simeq \biguplus_{1\leq i\leq \ell^k} \Pi_k^\partial.
\]
The only difference here is that the morphisms $\varphi\colon [k] \rightarrow k[\ell]$ which represent maximal elements are now characterized by all the distinct elements of  $[\ell]^{[k]}$. 
\end{rem}

\begin{exam}\label{ex:lat}\emph{The minor homomorphism poset of the distributive lattice $\alg{L}$ depicted in  Fig.~\ref{fig:lat} is isomorphic the disjoint union of $30$ copies of the two-element chain and an antichain of order $35$:}
\begin{equation*}
\mbf{\var{D}}_{\alg{L}} \simeq \biguplus_{1\leq i \leq 30} \alg{\Pi}_2^\partial \uplus \biguplus_{1\leq j\leq 29} \alg{\Pi}_1^\partial \uplus \biguplus_{1\leq k \leq 6} \alg{\Pi}_{0}^\partial.
\end{equation*}
\begin{proof}
The dual $\alg{L}^*$ of $\alg{L}$ is computed as described in Subsection \ref{sub:dl} as the finite bounded poset $\struc{J(\alg{L})_{01}, \leq_\alg{L}, 0,1}$ of the join-irreducible elements $J(\alg{D})=\{a, b, c\}$ of $\alg{L}$ with constants $0$ and $1$ added, and is depicted in Fig.~\ref{fig:lat}.
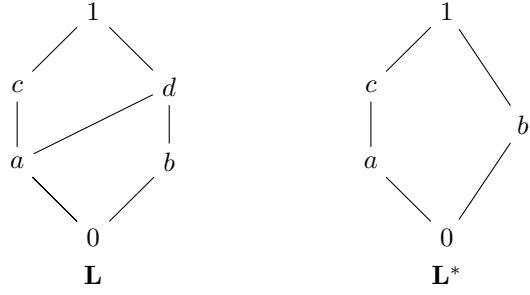
\begin{figure}
\begin{tikzpicture}
  \node (0) at (0,0) {$0$};
  \node (1) at (0,3) {$1$};
  \node (p) at (-1,1) {$a$};
  \node (q) at (1,1) {$b$};
  \node (r) at (-1,2) {$c$};
  \node (s) at (1,2) {$d$};
  \draw (0) -- (p) -- (r) -- (1) -- (s) -- (q) -- (0) -- (p) -- (s);
  \draw (0,-0.5) node{$\alg{L}$};
\end{tikzpicture}
\hspace{2cm}
\begin{tikzpicture}
  \node (0) at (0,0) {$0$};
  \node (1) at (0,3) {$1$};
  \node (r) at (-1,1) {$a$};
  \node (p) at (-1,2) {$c$};
  \node (q) at (1,1.5) {$b$};
  \draw (0) -- (r) -- (p) -- (1) -- (q) -- (0);
  \draw (0,-0.5) node{$\alg{L}^*$};
\end{tikzpicture}\caption{Distributive lattice $\alg{L}$ of Example \ref{ex:lat} and its dual $\alg{L}^*$.}\label{fig:lat}
\end{figure}

A morphism $\varphi\colon \alg{L}^* \rightarrow n\alg{L}^*$ needs to be order-preserving and to preserve $0$ and $1$. Therefore, if $\{\varphi(a), \varphi(c)\} \cap \{0,1\}=\varnothing$, then there is one index $i\in[n]$ such that $\{\varphi(a), \varphi(c)\}  \subseteq \{i\}\times \alg{L}^{*\flat}$. In particular, we have $\ess_d(\varphi)\leq 2$, since $\varphi(\alg{L}^*)$ can meet at most 2 different copies of $\alg{L}^{*\flat}$ in $n\alg{L}^*$ ($\varphi(b)$ may not be in the same copy of $\alg{L}^{*\flat}$ as $\{\varphi(a),\varphi(c)\}$). 

Hence,  up to equivalence $\equiv_d$, there are $30$ morphisms $\varphi\colon \alg{L}^* \to 2\alg{L}^*$ of co-essential arity $2$ and they are defined by the constraints
\begin{multline}
\big(\pr_2(\varphi(a)),\pr_2(\varphi(c))\big)\in \big\{(0,a), (0,b), (0,c), (a,c), (a,1),    (c,1), \\ (b,1), (a,a), (b,b), (c,c) \big\},\label{eqn:bvd0}
\end{multline}
\begin{gather}
\pr_2(\varphi(b))\in \{a,b,c\}, \label{eqn:bvd6}\\
\{\pr_1(\varphi(a)), \pr_1(\varphi(c))\}=\{1\}, \quad 
\pr_1(\varphi(b))=2.
\end{gather} 
They correspond to $30$ maximal elements in $\mbf{\var{D}}_{\alg{L}}$. 

Moreover, there are $29$ maximal elements $[\varphi]_d$ where $\varphi\colon\alg{L}^* \to  \alg{L}^*$ has co-essential arity 1. These maps satisfy either $\varphi(b)\in\{0,1\}$ and \eqref{eqn:bvd0}, or $\{\varphi(a),\varphi(c)\}\subseteq \{0,1\}$ and (\ref{eqn:bvd6}).

To complete the picture, there are $6$ non-equivalent nullary morphisms $\varphi$ defined by the constraints $\varphi(\alg{L}^*)\subseteq \mathcal{C}$ and $\varphi(a)\leq \varphi(c)$, and they also all correspond to maximal elements in $\struc{\homi_{\alg{L}}/{\equiv}, \leq}$.
\end{proof} 
\end{exam}

Example \ref{ex:boolfin} illustrates that a minor homomorphism poset can be much smaller than the corresponding minor poset. For instance, while $\mbf{\var{B}}_{\alg{2^k}}$ is of finite size $k^k  B_k$, the poset $\mbf{\mathcal{F}}_{\{0,1\}}$ is countably infinite and contains a copy of every finite poset, as shown in \cite{QuasiBool}.

Nevertheless, even for Boolean algebras $\alg{A}\in\var{B}$ the minor homomorphism poset $\mbf{\var{B}}_{\alg{A}}$ can get quite complex in the infinite case. There are two reasons why things get more complicated in this case: the essential arities might be unbounded and the topology comes into play. The following example illustrates this phenomenon (see also Example \ref{ex:bool2}).

\begin{exam}\label{ex:fini-cofini}\emph{The minor homomorphism poset of the Boolean algebra $\alg{A}$ of finite and cofinite subsets of $\N$ contains countably infinite chains, and uncountably infinite antichains.}
\begin{proof}The dual space $\textbf{A}^*$ is given by the one-point compactification $\N\cup\{ \omega \}$ of $\N$ (or, equivalently, the ordinal $\omega + 1$ with the order topology). Explicitly this means that for any $U\subseteq \alg{A}^*$ we have
\begin{equation*}
U\text{ is open }\quad \iff \quad U \subseteq \N \quad \text{ or }\quad (\omega \in U \text{ and } U\cap \N \text{ is cofinite}).  
\end{equation*}
The dual of a homomorphism $f\colon \textbf{A}^n \rightarrow \textbf{A}$ is a  continuous map $f^* \colon \alg{A}^* \rightarrow n\alg{A}^*$ and vice versa. Dealing with the topology on $\alg{A}^*$ easily leads to the construction of infinite chains and infinite antichains in $\mbf{\var{B}}_{\alg{A}}$, for instance. 

To construct an uncountable antichain, for every $S\subseteq\N$ let $c_S\colon \alg{A}^* \to  \alg{A}^*$ be the continuous map defined by
\begin{equation*}
c_S(x) = \begin{cases}
     \omega & \text{if } x \in S \\
      x & \text{otherwise.} 
   \end{cases}
\end{equation*}
Then $\{[{c_S}_*] \mid S\subseteq \N\}$ is an uncountable antichain in $\mbf{\var{B}}_{\alg{A}}$.

Now we construct a countable chain above an element $[\varphi]_d$ where $\varphi$ is any unary morphism.    
For every $n\geq 2$, we define $\varphi_n \colon \alg{A}^* \rightarrow n\alg{A}^*$ by
\begin{equation*}
\varphi_n(u) = \begin{cases}
     (1, \varphi(u)) & \text{if } u \in \alg{A}^*{\setminus} \{ 1,\dots,n - 1 \}  \\
     (u+ 1, \varphi(u)) & \text{if } u\in \{ 1,\dots,n - 1 \}. 
   \end{cases}
\end{equation*}       
For every $n\geq 2$, the map $\varphi_n$ is continuous (we have $\varphi_n(u)=\varphi(u)$ for all but a finite number of $u\in \alg{A}^*$) with $\ess_d(\varphi_n)=n$ and $\varphi_n \prec_d \varphi_{n+1}$,  since $\varphi_{n}=(\delta_{I})_{\alg{A}^*} \circ \varphi_{n+1}$ holds for $I=\{1,n+1\}$. Hence, the map $n\mapsto [\varphi_n]_d$ embeds $\struc{\N, \leq}$ in $\mbf{\cate{X}}_{\alg{A}^*}$ for all $\varphi \in \cate{X}_{\alg{A}^*}^{(1)}$. 
\end{proof}
\end{exam}

In Examples \ref{ex:boolfin} and \ref{ex:lat} the corresponding sets of essential arities $\ess(\homi_{\alg{A}\alg{B}}) := \{ \ess(f) \mid f\in \homi_{\alg{A}\alg{B}}\}$ have an upper bound. We generalize these examples in the following result.

\begin{thm}\label{MaxDisjUnion} 
Let $\textbf{A},\textbf{B}\in\mathcal{A}$ be algebras such that $\ess(\homi_{\alg{A}\alg{B}})$ is bounded and let $K$ be the collection of maximal elements in $(\homi_{\alg{A}\alg{B}}/{\equiv},\leq)$. Then 
\begin{equation*}
\mbf{\homi}_{\alg{A}\alg{B}} \simeq \biguplus_{[f]\in K} \alg{\Pi}^\partial_{\ess(f)}.
\end{equation*} 
\end{thm}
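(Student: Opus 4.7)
The plan is to reduce the theorem to two assertions: (i) every class in $\mbf{\homi}_{\alg{A}\alg{B}}$ lies below a maximal class, and (ii) distinct maximal classes have disjoint principal ideals. Granting these, Proposition \ref{partition}, which yields $[f]{\downarrow}\simeq \alg{\Pi}^\partial_{\ess(f)}$ for every $f$, immediately gives the stated decomposition.

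For (i), I would exploit the bound on $\ess(\homi_{\alg{A}\alg{B}})$. Given $f\in\homi_{\alg{A}\alg{B}}$, the set $\{\ess(g)\mid f\preceq g\}$ is nonempty and bounded, so it attains a maximum $n^*$ realized by some $g$ with no inessential arguments. If $g\preceq g'$, then $f\preceq g'$, so $\ess(g)\leq \ess(g')\leq n^*$, hence $\ess(g')=n^*$. Taking $g'$ also without inessential arguments, $g=g'\circ\tau^A$ for some $\tau\colon [n^*]\to [n^*]$; essentiality of every argument of $g$ forces $\tau$ to be surjective, hence bijective, whence $g\equiv g'$.

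For (ii), I would work in the dual category through Corollary \ref{cor:iso1}. Suppose $[f]\neq [g]$ are both maximal and $[h]\in [f]{\downarrow}\cap [g]{\downarrow}$; choose representatives without inessential arguments, of arities $n_f, n_g, n_h$. Then there exist $\tau\colon [n_f]\to [n_h]$ and $\sigma\colon [n_g]\to [n_h]$ with $\tau_{\alg{A}^*}\circ f^*=\sigma_{\alg{A}^*}\circ g^*=h^*$. Reading off coordinates exactly as in the proof of Lemma \ref{CommonMinor}, this gives $(f^*)^{-1}(\mathcal{C}^{n_f\alg{A}^*})=(g^*)^{-1}(\mathcal{C}^{n_g\alg{A}^*})=:P$ and $\pr_2\circ f^*=\pr_2\circ g^*$ on $\alg{B}^*\setminus P$. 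The next step is to produce a common upper bound for $f^*$ and $g^*$ in $\mbf{\cate{X}}_{\alg{B}^*\alg{A}^*}$. Fixing a bijection $\eta\colon [n_f]\times [n_g]\to [n_f n_g]$, I would define $\chi\colon \alg{B}^*\to (n_f n_g)\alg{A}^*$ by $\chi(y)=h^*(y)$ for $y\in P$ and $\chi(y)=(\eta(\pr_1 f^*(y),\pr_1 g^*(y)),\pr_2 f^*(y))$ for $y\notin P$. The maps $[n_f n_g]\to [n_f]$ and $[n_f n_g]\to [n_g]$ induced by the two projections of $\eta^{-1}$ then witness $f^*\preceq_d \chi$ and $g^*\preceq_d \chi$; by maximality of $[f^*]_d$ and $[g^*]_d$, this forces $\chi\equiv_d f^*\equiv_d g^*$, contradicting $[f]\neq [g]$.

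The delicate step is of course verifying that $\chi$ is an $\cate{X}$-morphism. Preservation of (partial) unary operations of $\utilde{\alg{M}}$ is routine because they act within each copy of $\alg{A}^{*\flat}$ in the direct union. Preservation of relations is exactly where the logarithmic hypothesis enters: by Theorem \ref{thm:LDT}, every relation of $\utilde{\alg{M}}$ of arity at least $2$ avoids binary products, hence any tuple of a relation $r$ in $\alg{B}^*$ is sent by $f^*$ (and by $g^*$) into a single copy of $\alg{A}^{*\flat}$. This makes the ``pair label'' $\eta(\pr_1 f^*,\pr_1 g^*)$ constant along the tuple, placing $\chi$ of the tuple inside a single copy of $\alg{A}^{*\flat}$ in $(n_f n_g)\alg{A}^*$. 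Continuity of $\chi$ then follows from continuity of $f^*$ and $g^*$ together with the fact that each $\{i\}\times\alg{A}^{*\flat}$ is clopen in the finite direct union.
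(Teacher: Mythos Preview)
Your proof is correct and follows the same global strategy as the paper: reduce to showing that every class lies below a maximal one and that distinct maximal classes have disjoint principal ideals, then invoke Proposition~\ref{partition}. Your argument for (i) is more explicit than the paper's (which simply asserts existence of a maximal element from boundedness of essential arity), but the content is the same.

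For (ii) the two constructions differ. The paper, assuming maximal $[f]$ and $[g]$ share a lower bound, uses Lemma~\ref{CommonMinor} to find a block $X_i$ of the partition induced by $f^*$ and a block $Y_j$ of that induced by $g^*$ with both $X_i\cap Y_j$ and $X_i\setminus Y_j$ nonempty; it then splits $X_i$ along $Y_j$ to produce a morphism $\varphi\colon \alg{B}^*\to (n+1)\alg{A}^*$ strictly above $f^*$, contradicting maximality of $[f]$ alone. Your $\chi$ instead realises the full common refinement of the two partitions, yielding a \emph{common} upper bound for $f^*$ and $g^*$, and maximality of each then forces $f^*\equiv_d g^*$. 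Your construction is more symmetric and conceptually natural, while the paper's is more economical (one extra copy instead of $n_fn_g$). Both rely on exactly the same structural facts: Lemma~\ref{CommonMinor} for agreement of the second coordinates, and the fact that in a direct union a related tuple lives in a single copy modulo constants. Your continuity argument via the clopen fibres $(f^*)^{-1}(\{i\}\times\alg{A}^{*\flat})$ and $(g^*)^{-1}(\{j\}\times\alg{A}^{*\flat})$ parallels the paper's exactly.

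One small correction: your appeal to Theorem~\ref{thm:LDT} is slightly misplaced. The reason a related tuple in $n\alg{A}^*$ lies (up to constants) in a single copy of $\alg{A}^{*\flat}$ is simply the definition of the relational structure on a direct union, which is already part of Assumption~\ref{ass:main}. Theorem~\ref{thm:LDT} characterises \emph{when} coproducts are direct unions; once you are in the logarithmic setting you need not invoke it again. This does not affect the validity of your argument.
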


\begin{proof}
Since $\ess(\homi_{\alg{A}\alg{B}})$ is bounded, for every homomorphism $h \in \homi_{\alg{A}\alg{B}}$ there is a maximal element $m_h \succeq h$, and we need to show that $m_h$ is unique. For the sake of contradiction, assume that there are maximal elements $[f]$ and $[g]$ of $\mbf{\homi}_{\alg{A}\alg{B}}$  with $[f^*]_d{\downarrow} \cap [g^*]_d{\downarrow}\neq\varnothing$. We can assume that $f$ and $g$ both have no inessential arguments. By Lemma \ref{CommonMinor} the maps $f^*$ and $g^*$ differ only in how they distribute elements of $\alg{B}^*$ to different copies of $\alg{A}^{*\flat}$, and we  construct the partitions $X$ and $Y$ of $\alg{B^*}{\setminus} f^{*-1}(\mathcal{C}^{n\alg{A}^*})$ as we did in \eqref{eqn:partition}.

Since $[f^*]_d$ is maximal and  $[g^* ]_d\not\leq_d [f^*]_d$, we can find a block $X_i\in X$ and a block $Y_j\in Y$  such that both $X_i \cap Y_j$ and $X_i{\setminus} Y_j$ are nonempty. Now consider the map $\varphi \colon \alg{B}^* \rightarrow (n+1)\alg{A}^*$ defined by
\begin{equation*}
\varphi (u) = \begin{cases}
     \big(n+1, \pr_2( f^* (u))\big) & \text{if } u \in X_i\cap Y_j  \\
     f^*(u) & \text{otherwise. } 
   \end{cases}
\end{equation*}
We prove that $\varphi$ is a morphism and that $\ess_d(\varphi)=n+1$.
%This map $\phi$ has essential arity $n+1$ since both $F_i \cap G_j$ and $F_i{\setminus} G_j$ are non-empty. Furthermore, it is still a morphism:
For the sake of contradiction,  assume that there is some $k$-ary relation $r$ such that $(u_1,\ldots, u_k)\in r^{\alg{A}^*}$  but $(\varphi(u_1),\ldots, \varphi(u_k)) \not\in r^{n\alg{A}^*}$ in the type of $\alg{A}^*$ (the argument for partial or total functions is similar). Since $f^*$  preserves relations, we have $X_i \cap Y_j \cap \{ u_1,\ldots, u_k \}\neq \varnothing$ and $(X_i{\setminus} Y_j) \cap \{u_1\dots, u_k\}\neq \varnothing$. In particular, $g^*(\{u_1,\dots, u_k\})$ meets at least two different copies of $\alg{A}^{*\flat}$ in $n\alg{A}^*$ and cannot preserve $r$, a contradiction. 

Now, we prove that $\varphi$ is continuous. Let $k\in [n+1]$ and $C$ be a closed subset of $\{ k \} \times \alg{A}^{*\flat}$. Then
\begin{equation*}
\varphi^{-1}(C) = \begin{cases}
     (f^*)^{-1}(C) \cap X_i \cap Y_j & \text{ if } k = n+1  \\ 
     (f^*)^{-1}(C){\setminus} (X_i \cap Y_j) & \text{ otherwise }    \\
   \end{cases}   
\end{equation*}
is  closed since $X_i$ and $Y_j$ are clopen and $f^*$ is continuous.   Moreover, we have $\ess_d(\varphi)=n+1$ by definition of $\varphi$ since $f$ has essential arity $n$.

Now, we clearly have $[f^*]_d<[\varphi]_d$ by construction, which contradicts the maximality of $[f]$.
\end{proof}

Theorem \ref{MaxDisjUnion}  shows that $\mbf{\homi}_{\alg{A}\alg{B}}$ is completely determined by the essential arities of its maximal elements. Now, we investigate how to recognize these maximal elements in the dual category. Informally, they are represented by those morphisms $\varphi\colon \alg{B}^* \to n\alg{A}^*$ without inessential co-arguments such that $\varphi(\alg{B})^* \cap \big(\{i\}\times \alg{A}^*\big)$ cannot be decomposed into two substructures that would lead to a morphism covering $\varphi$ in $\struc{\cate{X}_{\alg{B}^*,\alg{A}^*}, \preceq}$.

\begin{defin}
A substructure $\alg{Y}$ of $\alg{X}\in \cate{X}$ is called \emph{complete} if it satisfies the following conditions.
\begin{enumerate}
\item\label{it:mlk} For every $k$-ary relation $r$ of $\alg{X}$ and every $x_1, \ldots, x_k \in X$
\[
\big(\{x_1, \ldots x_k\} \cap Y \neq \varnothing \text{ and } (x_1\ldots, x_k) \in r\big)\implies \{x_1, \ldots, x_k\}\subseteq Y.
\]
\item For every partial or total operation $O$ of $X$, the graph $r_O$ of $O$ satisfies condition (\ref{it:mlk}). 
%\item For every $k$-ary partial operation $\tau$ of $X$ an every $x_1, \ldots, x_k\in X$,
%\[
%\big((x_1, \ldots, x_k)\in \dom(\tau) \text{ and }\{x_1, \ldots, x_k\} \cap Y \neq \varnothing\big) \implies \{x_1, \ldots, x_k\} \subseteq Y.
%\]
\end{enumerate}
For every $S\subseteq \alg{X}$ we denote by $\struc{S}$ the smallest complete substructure of $\alg{X}$ that contains $S$.
For every $\alg{X}\in\cate{X}$, let $\sim$ denote the equivalence relation defined on $\alg{X}^\flat$ by
$
x \sim y  \text{ iff }  \struc{\{x\}}=\struc{\{y\}}.
$
\end{defin}
\begin{lem}\label{lem:awf}
For every $\alg{X}\in \cate{X}$ and every $x,y\in \alg{X}^\flat$ we have $x\sim y$ if and only if there are relations $r_1, \ldots, r_\ell \in \mathcal{R}^{\alg{X}}\cup \{r_O \mid O \in \mathcal{G}\cup \mathcal{H}\}$ of arity $k_1, \ldots, k_\ell$, respectively, and $\mathbf{x}^i \in r_i$ for all $i\leq \ell$ such that $x\in\{x^1_1, \ldots x^1_{k_1}\}$, $y\in \{x^\ell_1, \ldots, x^\ell_{k_\ell}\}$, and $\{x^j_1, \ldots, x^{j}_{k_j}\}\cap\{x^{j+1}_1, \ldots, x^{j+1}_{k_j}\}\neq \varnothing$ for all $j\leq \ell-1$. 
\end{lem}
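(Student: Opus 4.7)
I would introduce an auxiliary relation $\approx$ on $X$ by setting $u \approx v$ whenever $u = v$ or there is a chain of tuples as in the statement joining $u$ to $v$. Reflexivity is built in, symmetry follows by reversing a chain, and transitivity by concatenation, so $\approx$ is an equivalence relation on $X$. The lemma amounts to the claim that $\sim$ and $\approx$ agree on $X^\flat$.

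For the direction $(\Leftarrow)$, suppose that a chain $\mathbf{x}^1 \in r_1, \ldots, \mathbf{x}^\ell \in r_\ell$ witnesses $x \approx y$. The argument is a short induction on $j \leq \ell$ showing that every coordinate of $\mathbf{x}^j$ belongs to $\struc{\{x\}}$. The base case $j=1$ uses $x \in \{x^1_1, \ldots, x^1_{k_1}\} \cap \struc{\{x\}}$, so condition~(1) of the definition of complete substructure---or condition~(2) when $r_1$ is the graph of an operation---forces every coordinate of $\mathbf{x}^1$ into $\struc{\{x\}}$. For the inductive step, the shared coordinate of $\mathbf{x}^j$ and $\mathbf{x}^{j+1}$ already lies in $\struc{\{x\}}$ by induction, and completeness applied to $r_{j+1}$ again pulls $\mathbf{x}^{j+1}$ inside. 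In particular $y \in \struc{\{x\}}$, and reversing the chain yields $x \in \struc{\{y\}}$; minimality of the smallest complete substructure then gives $\struc{\{x\}} = \struc{\{y\}}$, i.e.\ $x \sim y$.

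For the converse $(\Rightarrow)$, the plan is to describe $\struc{\{x\}}$ explicitly. Let $U$ be the union of the $\approx$-class of $x$ with the $\approx$-classes of all constants in $\mathcal{C}^\alg{X}$. Then $U$ contains $x$ and all constants; if $a_1, \ldots, a_k \in U$ belong to the domain of some (partial or total) operation $O$, the graph $r_O$ supplies a one-step chain from any $a_i \in U$ to $O(a_1, \ldots, a_k)$, so the output stays in the same $\approx$-class and hence in $U$. The same one-step-chain observation shows that $U$ satisfies condition~(1): any tuple of any relation meeting $U$ has all of its coordinates $\approx$-equivalent and hence all inside $U$. Therefore $U$ is a complete substructure containing $x$, and minimality yields $\struc{\{x\}} \subseteq U$. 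Since $y \in \struc{\{x\}} = \struc{\{y\}} \subseteq U$, the element $y$ is $\approx$-equivalent to $x$ or to some constant, and by symmetry the same holds for $x$ relative to $y$; combining these (and, if necessary, concatenating through constants linked by the structure of $\alg{X}$) produces the desired chain from $x$ to $y$.

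The main obstacle I expect is the bookkeeping in the $(\Rightarrow)$ direction: enlarging $U$ to include the $\approx$-classes of constants is forced by the substructure axiom, and one must check carefully that this enlargement does not break the final identification of $y$ as $\approx$-equivalent to $x$. Once the structure of $\struc{\{x\}}$ has been pinned down, the remaining ingredients---the induction on chain length in $(\Leftarrow)$ and the minimality appeal---are routine.
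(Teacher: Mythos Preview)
The paper states this lemma without proof, so there is no argument of the authors' to compare against; the following comments concern your proposal on its own merits.

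Your $(\Leftarrow)$ direction is correct and cleanly organised: the induction on the length of the chain, using completeness at each step, shows $y\in\struc{\{x\}}$ and symmetrically $x\in\struc{\{y\}}$, whence $\struc{\{x\}}=\struc{\{y\}}$.

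The $(\Rightarrow)$ direction, however, has a genuine gap---precisely the one you flag in your final paragraph but do not close. Your description $\struc{\{x\}}=U=[x]_{\approx}\cup\bigcup_{c\in\mathcal{C}^{\alg{X}}}[c]_{\approx}$ is correct, but from $y\in U$ you only conclude that $y\approx x$ \emph{or} $y\approx c$ for some constant $c$. In the second case your sketch appeals to ``concatenating through constants linked by the structure of $\alg{X}$'', yet nothing in the hypotheses guarantees that distinct constants lie in the same $\approx$-class. Concretely, imagine a structure with two constants $c_1,c_2$ and a binary relation whose only tuples are $(x,c_1)$ and $(y,c_2)$: then $\struc{\{x\}}=\struc{\{y\}}=X$, so $x\sim y$, while no chain of relation-tuples joins $x$ to $y$ because $\{x,c_1\}$ and $\{y,c_2\}$ are the only tuples and they are disjoint. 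Your argument, as written, does not rule this configuration out.

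To repair the argument you must either (a) show that under Assumption~3.2 such a configuration cannot occur in $\cate{X}$ (for instance, by arguing that all constants are mutually $\approx$-related, which is indeed the case in every concrete duality the paper treats), or (b) observe that the only use of the lemma in the paper---inside Proposition~5.9---actually needs only the following weaker fact, which bypasses the problematic case entirely: for any morphism $\varphi\colon\alg{B}^*\to n\alg{A}^*$, the preimage $\varphi^{-1}\big((\{i\}\times\alg{A}^{*\flat})\cup\mathcal{C}^{n\alg{A}^*}\big)$ is a complete substructure of $\alg{B}^*$, so $u\sim v$ and $\varphi(u)\in\{i\}\times\alg{A}^{*\flat}$ force $\varphi(v)\in(\{i\}\times\alg{A}^{*\flat})\cup\mathcal{C}^{n\alg{A}^*}$. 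Either route would complete the picture; as it stands, the hand-wave in your last sentence is where the proof breaks.
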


From the perspective of topology, there is no reason for $\sim$ to have nice properties. For instance, we cannot assume  that  the classes of $\sim$ are closed or open, although this is obviously true if $\alg{X}$ is finite. 
\begin{defin}
We say that $\alg{X}\in \cate{X}$ has the \emph{FCO property} if $\alg{X}^\flat/{\sim}$ is finite and if $E \cup \mathcal{C}^\alg{X}$ is a clopen subspace of $\alg{X}$ for every equivalence class $E\in \alg{X}^\flat/{\sim}$.
\end{defin}

\begin{prop}\label{prop:bbound}
Let $\alg{A}, \alg{B}\in \var{A}$. If $\alg{B}^{*}$ has the FCO property, then 
$\ess(\homi_{\alg{A}\alg{B}})\leq |\alg{B}^{*\flat}/{\sim}|$. If in addition $\alg{A} =\alg{B}$, then  $\ess(\homi_\alg{A}) = |\alg{A}^{*\flat}/{\sim}|$. 
\end{prop}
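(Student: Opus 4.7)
The plan is to translate everything to the dual category via Corollary~\ref{cor:iso1} and Lemma~\ref{lem:essvar}: $\ess(\homi_{\alg{A}\alg{B}})$ equals the supremum of co-essential arities $\ess_d(\varphi)$ over morphisms $\varphi\colon\alg{B}^*\to n\alg{A}^*$ in $\cate{X}$. The two inequalities then become a purely dual statement about such morphisms.

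\textbf{Upper bound.} Given $\varphi\colon\alg{B}^*\to n\alg{A}^*$ with $\ess_d(\varphi)=n$, for each $i\in [n]$ pick $b_i\in\alg{B}^*$ with $\varphi(b_i)\in\{i\}\times\alg{A}^{*\flat}$. Since morphisms send constants to constants, each $b_i$ lies in $\alg{B}^{*\flat}$. I would show that the $b_i$ lie in pairwise distinct $\sim$-classes, which gives $n\leq |\alg{B}^{*\flat}/{\sim}|$. Assuming $b_i\sim b_j$ and using Lemma~\ref{lem:awf}, one obtains a chain $\mathbf{x}^1,\dots,\mathbf{x}^\ell$ of tuples in relations (or graphs of partial/total operations) of $\alg{B}^*$ with $b_i\in\mathbf{x}^1$, $b_j\in\mathbf{x}^\ell$, and consecutive tuples sharing an entry. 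Applying $\varphi$ to the chain, each $\varphi(\mathbf{x}^s)$ lies in the corresponding relation of $n\alg{A}^*$, and by construction of the direct union, every relation tuple of $n\alg{A}^*$ has all its non-constant entries concentrated in a single copy $\{k\}\times\alg{A}^{*\flat}$. Propagating this copy index along the chain from $\mathbf{x}^1$ (whose non-constants sit in copy $i$) to $\mathbf{x}^\ell$ (copy $j$) through the shared non-constant entries yields $i=j$.

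\textbf{Lower bound.} Assume $\alg{A}=\alg{B}$ and enumerate $\alg{A}^{*\flat}/{\sim}=\{E_1,\dots,E_n\}$, which is finite by the FCO property. Define
\[
\varphi\colon\alg{A}^*\to n\alg{A}^*,\qquad \varphi(c)=c\ \text{for } c\in\mathcal{C}^{\alg{A}^*},\quad \varphi(x)=(i,x)\ \text{for } x\in E_i.
\]
I would check that $\varphi$ is a morphism in $\cate{X}$. Continuity follows from FCO: each $E_i\cup\mathcal{C}^{\alg{A}^*}$ is clopen in $\alg{A}^*$, and the restriction of $\varphi$ to it is the canonical homeomorphism onto the clopen subspace $(\{i\}\times\alg{A}^{*\flat})\cup\mathcal{C}^{n\alg{A}^*}$ of $n\alg{A}^*$. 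For structure preservation, the crucial observation is that any tuple in a relation or in the graph of a (partial) operation on $\alg{A}^*$ has all its non-constant entries directly connected via a single relation, hence lying in a common $\sim$-class; thus $\varphi$ sends them into the same copy, so the image of the tuple lies in the corresponding relation of $n\alg{A}^*$. Since each $E_i$ is non-empty, $\ess_d(\varphi)=n$, and dualizing gives a homomorphism in $\homi_\alg{A}^{(n)}$ of essential arity $n$.

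\textbf{Main obstacle.} The delicate step is the inductive propagation in the upper bound. If a shared non-constant between two consecutive tuples $\mathbf{x}^s,\mathbf{x}^{s+1}$ is sent by $\varphi$ into $\mathcal{C}^{n\alg{A}^*}$, the straightforward ``same copy'' argument stalls at that position, since constants lie in every copy simultaneously. I would expect to handle this either by tightening the chain (using Lemma~\ref{lem:awf} to choose a chain whose shared pivots land on non-constants under $\varphi$, exploiting the completeness of $\struc{\{b_i\}}=\struc{\{b_j\}}$) or by combining the logarithmic duality hypothesis with the relation-preservation constraint, which already forces strong compatibility between $\varphi(b_i)$, $\varphi(b_j)$ and any constants appearing in the intermediate tuples. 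This is where the full strength of Assumption~\ref{ass:main} is likely needed.
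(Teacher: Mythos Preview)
Your approach is essentially identical to the paper's: pass to the dual, use Lemma~\ref{lem:awf} for the upper bound, and for the lower bound (when $\alg{A}=\alg{B}$) construct exactly the splitting morphism $\psi(u)=(i,u)$ for $u\in E_i$, checking structure-preservation via the definition of $\sim$ and continuity via the FCO hypothesis. Your lower-bound construction coincides verbatim with the paper's.

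For the upper bound, the paper's argument is terser than yours: it simply asserts that if $u\sim v$ and $\varphi(u),\varphi(v)$ are both non-constant then they lie in the same copy of $\alg{A}^{*\flat}$, citing Lemma~\ref{lem:awf} without walking through the chain propagation. In particular, the obstacle you single out---a pivot in the Lemma~\ref{lem:awf} chain being sent by $\varphi$ into $\mathcal{C}^{n\alg{A}^*}$, which would break the ``same copy'' induction---is not addressed in the paper's proof either; the paper treats that implication as immediate. So you are being more careful than the paper at the one delicate point, and the fixes you sketch (re-choosing the chain, or exploiting the relation structure of the logarithmic coproduct more directly) are exactly what would be needed to make the step fully rigorous.
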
 

\begin{proof}
Let $\varphi\colon\alg{B}^* \to n\alg{A}^*$ be a morphism, and $u,v\in \alg{B}^{*\flat}$. By Lemma \ref{lem:awf}, if $u\sim v$ and $\{\varphi(u), \varphi(v)\}\subseteq (n\alg{A}^*)^\flat$, then $\varphi(u)$ and $\varphi(v)$ need to belong to the same copy of $\alg{A}^{*\flat}$ in $n\alg{A}^*$, which proves the first assertion.

For the second part of the statement, let $E_1, \ldots, E_n$ be the elements of $\alg{A}^{*\flat}/{\sim}$.  Define $\psi\colon \alg{A}^* \rightarrow n\alg{A}^*$ as the map that preserves constants and satisfies  $\psi(u)=(i,u)$ for any $i\leq n$ and any $u\in E_i$. Then, the map $\psi$ is a co-essentially $n$-ary morphism.        
\end{proof} 

Structures with the FCO property can sometimes be constructed using finite products of algebras, as illustrated in the next result.

\begin{cor}\label{cor:ess}
If $\alg{A}_1, \ldots, \alg{A}_k\in \var{A}$ satisfy $|(\alg{A}_i)^{*\flat}/{\sim}|=1$ for every $i\leq k$, then the dual of $\alg{A}:=\alg{A}_1\times \cdots \times \alg{A}_k$ has the FCO property. Moreover, we have $\ess(\homi_{\alg{B}\alg{A}})\leq k$ for every $\alg{B}\in \var{A}$, and $\ess(\homi_{\alg{A}})=k$.
\end{cor}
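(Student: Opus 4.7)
The plan is to reduce to Proposition \ref{prop:bbound} by verifying that the dual $\alg{A}^*$ satisfies the FCO property with exactly $k$ equivalence classes. Since Assumption \ref{ass:main} provides a logarithmic duality, finite coproducts in $\cate{X}$ are realized by direct unions, so the dual of the product decomposes as $\alg{A}^* \cong \alg{A}_1^* \oplus \cdots \oplus \alg{A}_k^*$. Writing $\iota_i\colon \alg{A}_i^* \hookrightarrow \alg{A}^*$ for the $i$-th coproduct injection, the carrier of $\alg{A}^*$ is the amalgamated disjoint union
\[
\biguplus_{i=1}^k \iota_i(\alg{A}_i^{*\flat}) \;\cup\; \mathcal{C}^{\alg{A}^*},
\]
its structure is the union of the structures of the summands, and its topology is the final topology with respect to the $\iota_i$.

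First I would compute $\alg{A}^{*\flat}/{\sim}$. By the explicit construction of the direct union, every tuple appearing in a relation of $\alg{A}^*$ or in the graph of one of its (partial) operations either consists entirely of constants or lies entirely within some $\iota_i(\alg{A}_i^*)$. By Lemma \ref{lem:awf}, this prevents any $\sim$-chain from crossing between distinct summands, so elements in different summands are never $\sim$-equivalent. Conversely, the assumption $|(\alg{A}_i)^{*\flat}/{\sim}| = 1$ guarantees that any two elements of $\alg{A}_i^{*\flat}$ are $\sim$-connected by a chain inside $\alg{A}_i^*$, and such a chain lifts verbatim through $\iota_i$ to $\alg{A}^*$. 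Hence $\alg{A}^{*\flat}/{\sim}$ consists of precisely the $k$ blocks $\iota_i(\alg{A}_i^{*\flat})$.

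Next I would verify the topological half of the FCO property, namely that each $E_i \cup \mathcal{C}^{\alg{A}^*} = \iota_i(\alg{A}_i^*)$ is clopen in $\alg{A}^*$ -- a standard feature of coproduct injections realized by direct unions. Combined with the finiteness $|\alg{A}^{*\flat}/{\sim}| = k$ established above, this shows that $\alg{A}^*$ has the FCO property. An application of Proposition \ref{prop:bbound} with $|\alg{A}^{*\flat}/{\sim}| = k$ then yields both $\ess(\homi_{\alg{B}\alg{A}}) \leq k$ for every $\alg{B} \in \var{A}$ and $\ess(\homi_\alg{A}) = k$.

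The main point requiring care is the $\sim$-analysis, specifically the claim that no non-constant relation tuple of $\alg{A}^*$ mixes elements from two different summands. This is immediate from the description of direct unions given in Subsection \ref{sec:natdual} (where relations are literally the union of the summands' relations, with only constants amalgamated), but it deserves an explicit verification to keep the proof self-contained.
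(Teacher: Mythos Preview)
Your proposal is correct and follows exactly the line the paper has in mind: the corollary is stated without proof as an immediate consequence of Proposition~\ref{prop:bbound}, and you supply precisely the details one would expect---the direct-union decomposition $\alg{A}^*\cong\alg{A}_1^*\oplus\cdots\oplus\alg{A}_k^*$, the computation of $\alg{A}^{*\flat}/{\sim}$ via Lemma~\ref{lem:awf}, and the appeal to Proposition~\ref{prop:bbound}. One small remark: you identify the $\sim$-analysis as the delicate step and treat the clopenness of $\iota_i(\alg{A}_i^*)$ as routine, but under the final topology that clopenness unwinds to the requirement that $\mathcal{C}^{\alg{A}_j^*}$ be open in each $\alg{A}_j^*$, which is automatic in the finite case (where the corollary is chiefly applied) but worth a sentence in the infinite case.
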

\begin{exam}Let $\alg{L}_\omega$ be the distributive lattice whose elements are $\N$ and its finite subsets, ordered by inclusion. Its dual Priestley space $\alg{L}_\omega^*$ is given by an enumerable antichain (made out of the filters generated by singletons $\{i\}$ for $i\in \N$), the elements of which are all covered by some element $\omega$, and two additional bounds $0$ and $1$. It follows from Lemma \ref{lem:awf} that $|\alg{L}^{*\flat}/{\sim}|=1$. By Corollary \ref{cor:ess}, any  $f\in  \var{D}_{\alg{L}\alg{L}_\omega^k}$ has essential arity at most $k$ for every bounded distributive lattice $\alg{L}$ and every $k\geq 1$, in particular there is no $f\in\var{D}_{\alg{L}\alg{L}_\omega}$ with $\ess(f)\geq 2$. 
\end{exam}

We can recover some Arrow type impossibility result from Corollary \ref{cor:ess}. 

\begin{cor}[{\cite [Corollary 4]{Couceiro2016}}]
Let $\alg{C}_1, \ldots, \alg{C}_k, \alg{D}$ be chains. A map $f\colon\alg{C}_1 \times \cdots \times \alg{C}_k \to \alg{D}$ is a median algebra homomorphism if and only if there is an $i\in[k]$ and a monotone map $g\colon\alg{C}_i \to \alg{D}$ such that $f=g\circ \pr_i$.
\end{cor}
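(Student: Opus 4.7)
My plan is to prove the forward implication by duality, since the backward implication is immediate: the projection $\pr_i\colon \alg{C}_1\times\cdots\times \alg{C}_k \to \alg{C}_i$ is a median homomorphism, and any monotone map $g\colon \alg{C}_i \to \alg{D}$ between chains preserves the median (which on a chain is just the middle element of three inputs), so $g\circ \pr_i$ is a median homomorphism.

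For the forward direction, set $\alg{A}:=\alg{C}_1\times\cdots\times\alg{C}_k$ and consider the dual morphism $f^*\colon \alg{D}^* \to \alg{A}^* = \alg{C}_1^* \oplus \cdots \oplus \alg{C}_k^*$, where the direct-union decomposition is supplied by the logarithmic median-algebra duality. My first task is to verify the hypothesis of Corollary \ref{cor:ess} by showing $|\alg{C}^{*\flat}/{\sim}|=1$ for every chain $\alg{C}$: the non-constants of $\alg{C}^*$ are the threshold maps $\chi_j\colon \alg{C}\to\{0,1\}$ and their complements $\chi_j^c$; the $\chi_j$'s are linearly ordered under the inherited $\leq$, so are the $\chi_j^c$'s, and the graph of $\cdot^c$ pairs each $\chi_j$ with $\chi_j^c$. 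Lemma \ref{lem:awf} then places all these elements in a single $\sim$-class. Applying this to $\alg{D}$ gives $|\alg{D}^{*\flat}/{\sim}|=1$, and applying it to each $\alg{C}_i$ brings Corollary \ref{cor:ess} to bear on $\alg{A}^*$.

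The central step is to show that $f^*$ sends $\alg{D}^{*\flat}$ into a single summand (up to constants). I will argue exactly as in the proof of Proposition \ref{prop:bbound}: for any $u,v\in\alg{D}^{*\flat}$, $u\sim v$ produces via Lemma \ref{lem:awf} a connecting path of $\leq$- and $\cdot^c$-tuples in $\alg{D}^*$; the image of this path under $f^*$ is a path of the same type in $\alg{A}^*$, but the direct-union structure $\alg{C}_1^* \oplus \cdots \oplus \alg{C}_k^*$ forbids any such tuple from containing non-constants in two different summands. Hence $f^*(u)$ and $f^*(v)$ must lie in the same summand $\alg{C}_i^{*\flat}$ whenever both are non-constant. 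Combined with $f^*(\mathcal{C}^{\alg{D}^*})\subseteq \mathcal{C}^{\alg{A}^*}$, this gives $f^*(\alg{D}^*)\subseteq \alg{C}_i^*$ as a summand of $\alg{A}^*$, so $f^* = \pr_i^* \circ g^*$ for some $g^*\colon \alg{D}^*\to\alg{C}_i^*$. Dualizing yields $f = g\circ\pr_i$ for the median homomorphism $g := (g^*)_*\colon \alg{C}_i \to \alg{D}$, which must preserve middle elements on the chain $\alg{D}$ and is therefore monotone (possibly after reorienting $\alg{C}_i$, a freedom implicit in the statement).

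The step I expect to be the main obstacle is the central one: the path-tracing argument needs care when an intermediate non-constant element of $\alg{D}^*$ is mapped by $f^*$ to a constant. The key observation, already exploited in Proposition \ref{prop:bbound}, is that the non-constants of $\alg{D}^*$ are linearly arranged by $\leq$ within each of the two $\cdot^c$-paired chains, so any two of them are connected by a short path whose image under $f^*$ cannot bridge distinct summands while respecting both the preservation of $\leq$ (non-constants in different summands are never comparable in the direct union) and of $\cdot^c$ (which maps each summand to itself).
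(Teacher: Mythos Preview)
Your proposal is correct and follows the approach the paper intends: the paper offers no explicit proof but presents the statement as a consequence of Corollary~\ref{cor:ess}, and your argument is precisely the natural instantiation of that machinery (verify $|\alg{C}^{*\flat}/{\sim}|=1$ for chains, then use the path-through-relations argument from Proposition~\ref{prop:bbound} to force $f^*(\alg{D}^{*\flat})$ into a single summand $\alg{C}_i^{*\flat}$). Your care about intermediate elements mapping to constants is well placed---the paper's proof of Proposition~\ref{prop:bbound} glosses over this---and your resolution via the operation $\cdot^c$ (which sends non-constants to non-constants, so the length-$\leq 2$ paths in $\alg{D}^{*\flat}$ have non-constant images) is exactly what is needed here.
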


The argument of the second part of the proof of Lemma \ref{prop:bbound} can be generalized as a useful Lemma that enables us to construct homomophism majors of elements of $\homi_{\alg{A}\alg{B}}$. 

\begin{lem}\label{Contin}
Assume that $\alg{A}, \alg{B}\in \var{A},$ and that $\alg{B}^*$ has the FCO property.
For any morphism $\varphi \colon \alg{B}^* \rightarrow n\alg{A}^*$  and any $E$ in $\alg{B}^{*\flat}/{\sim}$, the map $\psi \colon \alg{B}^* \rightarrow (n+1)\alg{A}^*$ defined by
\begin{equation*}
\psi(u) = \begin{cases}
    \big(n+1, \pr_2(\varphi(u))\big) & \text{if } u \in E{\setminus}\varphi^{-1}({\mathcal{C}^{n\alg{A}^*}})  \\
     \varphi(u) & \text{otherwise, } 
   \end{cases}   
\end{equation*}  
is a morphism. If, in addition, $E{\setminus}\varphi^{-1}({\mathcal{C}^{n\alg{A}^*}})\neq\emptyset$, then $\varphi \prec_d \psi$.
\end{lem}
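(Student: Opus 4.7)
The plan is to prove the lemma in three stages: verifying that $\psi$ preserves the algebraic structure of $\utilde{\alg{M}}$, verifying that $\psi$ is continuous, and finally establishing the strict comparison $\varphi \prec_d \psi$.

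For the algebraic part, I would take a $k$-ary relation or graph $r$ of a (partial or total) operation and a tuple $(u_1,\ldots,u_k)\in r^{\alg{B}^*}$. If no $u_i$ lies in $E\setminus \varphi^{-1}(\mathcal{C}^{n\alg{A}^*})$, then $\psi$ coincides with $\varphi$ on the tuple and the image lies in $r^{n\alg{A}^*}\subseteq r^{(n+1)\alg{A}^*}$. Otherwise, some $u_i$ belongs to $E\setminus \varphi^{-1}(\mathcal{C}^{n\alg{A}^*})$, and Lemma \ref{lem:awf} forces every non-constant $u_j$ to satisfy $u_j\sim u_i$, hence $u_j\in E$. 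Such a $u_j$ is either in $E\setminus \varphi^{-1}(\mathcal{C}^{n\alg{A}^*})$, in which case $\psi(u_j)$ sits in the $(n+1)$-th copy of $\alg{A}^{*\flat}$, or in $E\cap \varphi^{-1}(\mathcal{C}^{n\alg{A}^*})$, in which case $\psi(u_j)=\varphi(u_j)$ is a constant. Since $\varphi$ preserves $r$ and the direct-union structure of $n\alg{A}^*$ forces the non-constant entries of $(\varphi(u_1),\ldots,\varphi(u_k))$ to share a common copy of $\alg{A}^{*\flat}$, their second coordinates form a tuple in $r^{\alg{A}^*}$; relocating these second coordinates to the $(n+1)$-th copy while leaving constants untouched produces the desired element of $r^{(n+1)\alg{A}^*}$.

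For continuity, I would exploit the FCO property together with the continuity of $\varphi$. Namely, $E\cup \mathcal{C}^{\alg{B}^*}$ is clopen and $\mathcal{C}^{\alg{B}^*}$ is a finite (closed) set of points, so any open $U\subseteq (n+1)\alg{A}^*$ decomposes according to its intersections with the $n$ original copies of $\alg{A}^{*\flat}$, with the fresh copy $\{n+1\}\times \alg{A}^{*\flat}$, and with the constants. The preimage of the first portion under $\psi$ is the $\varphi$-preimage of the same subset of $n\alg{A}^*$, intersected with $\alg{B}^*\setminus (E\cup \mathcal{C}^{\alg{B}^*})$; the preimage of the portion in the $(n+1)$-th copy takes the form $E\cap \varphi^{-1}(W)$, where $W=\bigsqcup_{i\leq n}\{i\}\times U'$ and $U'\subseteq \alg{A}^{*\flat}$ records the second coordinates of $U$ in the fresh copy; the preimage of the constant portion coincides with $\varphi^{-1}$ of the corresponding constants. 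Each of these sets is open.

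For the order relation, assume $E\setminus \varphi^{-1}(\mathcal{C}^{n\alg{A}^*})\neq \varnothing$. The argument behind Proposition \ref{prop:bbound} (i.e., Lemma \ref{lem:awf} applied to $\sim$-connected pairs with non-constant $\varphi$-images) supplies a unique $j_E\in [n]$ with $\varphi(E\setminus \varphi^{-1}(\mathcal{C}^{n\alg{A}^*}))\subseteq \{j_E\}\times \alg{A}^{*\flat}$. Defining $\tau\colon[n+1]\to [n]$ by $\tau(i)=i$ for $i\leq n$ and $\tau(n+1)=j_E$, a direct case split on whether $u\in E\setminus \varphi^{-1}(\mathcal{C}^{n\alg{A}^*})$ verifies $\tau_{\alg{A}^*}\circ \psi=\varphi$, giving $\varphi\preceq_d \psi$. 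Strictness follows because $\psi$ activates the fresh $(n+1)$-th copy of $\alg{A}^{*\flat}$ via $E\setminus \varphi^{-1}(\mathcal{C}^{n\alg{A}^*})$ while still retaining every copy used by $\varphi$, so the essential co-arity of $\psi$ strictly exceeds that of $\varphi$. The main obstacle is the mixed case of the morphism verification, which requires careful accounting of constants using Lemma \ref{lem:awf}; continuity and the order relation are then routine once the structural dictionary is in place.
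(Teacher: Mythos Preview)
Your arguments for structure preservation and continuity are correct and considerably more detailed than the paper's, which dismisses the former as ``by construction'' and gives only a terse closed-set preimage computation for the latter. Your derivation of $\varphi\preceq_d\psi$ via the map $\tau$ is exactly the paper's $(\delta_I)_{\alg{A}^*}$ with $I=\{j_E,n+1\}$.

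The one genuine gap is your justification of \emph{strict} inequality. You assert that $\psi$ ``still retain[s] every copy used by $\varphi$'', but this fails precisely when $E$ is the only $\sim$-class whose $\varphi$-image meets $\{j_E\}\times\alg{A}^{*\flat}$. In that situation the $j_E$-th copy is emptied out by the relocation, so $\ess_d(\psi)=\ess_d(\varphi)$; indeed $\psi=\sigma_{\alg{A}^*}\circ\varphi$ for $\sigma\colon[n]\to[n+1]$ sending $j_E\mapsto n+1$ and fixing all other indices, whence $\varphi\equiv_d\psi$. Thus the strict comparison does not follow from the hypothesis $E\setminus\varphi^{-1}(\mathcal{C}^{n\alg{A}^*})\neq\varnothing$ alone. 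Note, however, that the paper's own proof also stops at $\varphi=(\delta_I)_{\alg{A}^*}\circ\psi$ and never argues strictness; where the lemma is actually applied (in Theorem~\ref{MorphismsCharacterization}) there is always a second class $E'\neq E$ with $\varphi(E')$ meeting the same copy $\{j_E\}\times\alg{A}^{*\flat}$, and that extra hypothesis is exactly what forces $\ess_d(\psi)>\ess_d(\varphi)$.
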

\begin{proof}
The map $\psi$ is structure preserving by construction. We need to show that it is also continuous. Let $C$ be a closed subset of $(\{i\} \times \alg{A}^{*\flat})\cup \mathcal{C}^{n\alg{A}^*}$ for some $i\in [n+1]$ and let $\ell$ be the unique element of $[n]$ such that $\varphi(E)$ meets $\{\ell\}\times \alg{A}^{*\flat}$. We have
\[
\psi^{-1}(C)=\begin{cases}
\varphi^{-1}(\{\ell\}\times pr_2(C)) \cap E & \text{if } i = n+1\\
\varphi^{-1}(C) \cap (\alg{B}^*{\setminus} E) &  \text{otherwise,}
\end{cases}
\]
which shows that $\psi$ is continuous by continuity of $\varphi$ and the fact that $E$ is clopen by assumption. Moreover, we have $\varphi=(\delta_I)_{\alg{A}^*}\circ \psi$ where $I = \{ \ell, n+1 \}$.
\end{proof}

We have the following dual characterization of maximal elements in $\mbf{\homi}_{\alg{A}\alg{B}}$.
%We can now describe (all) the morphisms in $\mathcal{X}(\textbf{B}^*, n\textbf{A}^*)$, in particular the representatives of (duals of) maximal elements in the minor homomorphism poset. 
Observe that for any $E\in \alg{B}^{*\flat}/{\sim}$ the subspace $E^\#:=E\cup\mathcal{C}^{\alg{B}^*}$ forms a closed substructure $\mbf{E}^\#$ of $\textbf{B}^*$.

\begin{thm}\label{MorphismsCharacterization}
Assume that $\alg{A}, \alg{B} \in \var{A}$ and that $\alg{B}^*$ has the FCO property, and let  $\varphi \colon \alg{B}^* \to n\alg{A}^*$ be  a map.
\begin{enumerate}
\item\label{itmdvf01} We have $\varphi\in\cate{X}(\alg{B}^*, n\alg{A}^*)$ if and only it for every $E\in \alg{B}^{*\flat}/{\sim}$ with $\varphi(E)\not\subseteq \mathcal{C}^{nA^*}$, there is an $i\in [n]$ such that $\varphi{\restriction}_{\mbf{E}^\#}$ is valued in $\{i\}\times \alg{A}^{*}$ and is a morphism.
\item\label{itmdvf02} If condition (\ref{itmdvf01}) is satisfied, then $[\varphi]_d$ is maximal if and only if for every $E_1\neq E_2$ in $\alg{B}^{*\flat}/{\sim}$ and all $i\leq n$ we have 
\[
\varphi(E_1)\cap (\{i\}\times \alg{A}^{*\flat})\neq \varnothing \quad \implies \quad \varphi(E_2)\cap (\{i\}\times \alg{A}^{*\flat})=\varnothing.
\] 
\end{enumerate}
\end{thm}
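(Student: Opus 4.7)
The plan is to split the proof along the two parts of the theorem and use the FCO hypothesis to write $\alg{B}^*=\bigcup_{E\in \alg{B}^{*\flat}/{\sim}}\mbf{E}^{\#}$ as a finite union of clopen closed substructures, so that every question about $\varphi$ reduces to questions about the restrictions $\varphi{\restriction}_{\mbf{E}^{\#}}$. The underlying structural fact I will use throughout is that a relation tuple in the direct union $n\alg{A}^*$ either is entirely constant or has all its non-constant entries in a single copy of $\alg{A}^{*\flat}$ (with constants allowed at any position); this is immediate from the description of direct unions in Subsection~\ref{sec:natdual}.

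For part~(1), the forward direction extends the chain argument already used in Proposition~\ref{prop:bbound}. Given $E$ with $\varphi(E)\not\subseteq \mathcal{C}^{n\alg{A}^*}$, I would fix $u_0\in E$ with $\varphi(u_0)=(i,a)$ non-constant; for any other $v\in E$, either $\varphi(v)\in \mathcal{C}^{n\alg{A}^*}\subseteq \{i\}\times\alg{A}^*$ and there is nothing to prove, or else Lemma~\ref{lem:awf} supplies a chain of relation tuples from $u_0$ to $v$ whose $\varphi$-images are linked along non-constant shared entries and therefore remain in copy~$i$. Thus $\varphi(\mbf{E}^{\#})\subseteq \{i\}\times \alg{A}^*$, and $\varphi{\restriction}_{\mbf{E}^{\#}}$ inherits the morphism property because $\mbf{E}^{\#}$ is a closed substructure of $\alg{B}^*$. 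For the backward direction, any relation tuple $(x_1,\dots,x_k)\in r^{\alg{B}^*}$ has all its non-constant entries in a single $\sim$-class by a one-step application of Lemma~\ref{lem:awf}, hence sits in some $\mbf{E}^{\#}$, and the hypothesis sends it into $r^{\{i_E\}\times \alg{A}^*}\subseteq r^{n\alg{A}^*}$; operations and partial operations are handled identically via their graphs. Continuity is immediate from FCO, since $\alg{B}^*$ is a finite union of clopens on each of which $\varphi$ is continuous.

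For part~(2), I would proceed by two contrapositives built on the copy-assignment $E\mapsto i_E$ determined by part~(1). Assuming the copy-separation condition and given $\psi\colon\alg{B}^*\to m\alg{A}^*$ with $\varphi\preceq_d\psi$, write $\varphi=\tau_\alg{X}\circ\psi$ for some $\tau\colon[m]\to[n]$; part~(1) applied to $\psi$ yields copies $j_E$, the identity $i_E=\tau(j_E)$ together with the injectivity of $E\mapsto i_E$ forces injectivity of $E\mapsto j_E$, and Lemma~\ref{CommonMinor} (read with $\varphi$ itself as a common minor of $\varphi$ and $\psi$) gives $\varphi^{-1}(\mathcal{C}^{n\alg{A}^*})=\psi^{-1}(\mathcal{C}^{m\alg{A}^*})$ together with $\pr_2\circ\varphi=\pr_2\circ\psi$ off the constants. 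Defining $\tau'\colon[n]\to[m]$ by $\tau'(i_E):=j_E$ (extended arbitrarily on unassigned indices), a direct case check on whether $\varphi(u)$ is constant shows $\psi=\tau'_\alg{X}\circ\varphi$, so $\psi\preceq_d\varphi$ and $[\varphi]_d=[\psi]_d$. Conversely, if two distinct classes $E_1\neq E_2$ both meet copy~$i$ non-trivially, Lemma~\ref{Contin} applied to $E_2$ produces $\psi\in\cate{X}(\alg{B}^*,(n+1)\alg{A}^*)$ with $\varphi\prec_d\psi$, contradicting the maximality of $[\varphi]_d$.

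The subtlety I expect to demand most care is the interaction with $\mathcal{C}^{n\alg{A}^*}$ in part~(1)($\Rightarrow$): because $\varphi$ may send non-constant elements of $E$ into constants, and because constants are shared across every copy of $n\alg{A}^*$, the chain from Lemma~\ref{lem:awf} can a priori ``unanchor'' from copy~$i$ at a constant bridge. The argument must therefore isolate the non-constant bridges along which the copy information is transmitted and absorb the constant case into the inclusion $\mathcal{C}^{n\alg{A}^*}\subseteq \{i\}\times \alg{A}^*$, matching the convention already used in the proof of Proposition~\ref{prop:bbound}.
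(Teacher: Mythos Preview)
Your treatment of part~(\ref{itmdvf01}) and of the implication ``maximal $\Rightarrow$ copy-separation'' in part~(\ref{itmdvf02}) matches the paper: the forward direction of~(\ref{itmdvf01}) is dismissed there as ``clearly necessary'' (so the constant-bridge subtlety you flag is not addressed in the paper either --- your chain argument is simply a more explicit version of what is already implicit in the proof of Proposition~\ref{prop:bbound}), the converse is handled via the definition of $\sim$ together with the FCO hypothesis just as you propose, and Lemma~\ref{Contin} is invoked exactly as in your argument to show that a collision of two $\sim$-classes in one copy destroys maximality. The genuine divergence is in ``copy-separation $\Rightarrow$ maximal''. The paper argues by contrapositive: given $\psi\colon\alg{B}^*\to(n{+}1)\alg{A}^*$ with no inessential co-argument and $\varphi=\tau_{\alg{A}^*}\circ\psi$, part~(\ref{itmdvf01}) locates distinct $\sim$-classes $E_1,E_2$ that $\psi$ sends into copies $n{+}1$ and $\tau(n{+}1)$, whence both collapse into copy $\tau(n{+}1)$ under $\varphi$, violating the separation condition. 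Your route is direct: for an arbitrary $\psi$ with $\varphi\preceq_d\psi$ you pin down $\pr_2\circ\psi$ and the constant locus via Lemma~\ref{CommonMinor}, then build an explicit $\tau'$ inverting the copy assignment to witness $\psi\preceq_d\varphi$. Both work; yours is more constructive and avoids the reduction to arity $n{+}1$, but note that Lemma~\ref{CommonMinor} is stated only for $\varphi,\psi$ with the same target $n\alg{X}$, whereas you apply it with $\psi$ landing in $m\alg{A}^*$ --- the proof of that lemma carries over verbatim, so a one-line remark to this effect is all that is needed.
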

\begin{proof}
(\ref{itmdvf01}) The condition is clearly necessary. To prove the converse, it suffices to note that $\varphi$ is structure preserving by definition of $\sim$ and continuous since $E^\#$ is clopen for every $E\in \alg{B}^{*\flat}/{\sim}$.

(\ref{itmdvf02}) To show that the condition is necessary, we can assume that $\varphi$ has no inessential co-argument and we prove the contrapositive. Let $E_1$ and $E_2$ be distinct elements in $\alg{B}^{*\flat}/{\sim}$ and assume that $\varphi(E_1)\cap (\{i\} \times \alg{A}^{*\flat})\neq \varnothing$ and $\varphi(E_2)\cap (\{i\} \times \alg{A}^{*\flat})\neq \varnothing$ for some $i\leq n$.
Then, the map $\psi \colon \alg{B}^* \rightarrow (n+1)\alg{A}^*$ defined by 
\begin{equation*}
\psi(u) = \begin{cases}
     (n+1, (\pr_2\circ\varphi)(u)) & \text{if } u \in E_1  \\
     \varphi(u) & \text{otherwise } 
   \end{cases}   
\end{equation*}
is a morphism with $[\varphi]_d<_d[\psi]_d$ according to Lemma~\ref{Contin}. Thus we have proved that $[\varphi]_d$ is not maximal.

We prove that the condition is sufficient by contrapositive. Assume that there is a morphism $\psi\colon \alg{B}^* \to (n+1)\alg{A}^*$ with no inessential co-argument and a map $\tau\colon [n+1] \to [n]$ such that $\varphi=\tau_{\alg{A}^*}\circ \psi$. By (\ref{itmdvf01}) there are elements $E_1, E_2$ of $\alg{B}^{*\flat}/{\sim}$ such that
\[
E_1\subseteq \psi^{-1}(\{n+1\}\times \alg{A}^{*\flat}) \quad \text{ and } \quad E_2\subseteq \psi^{-1}(\{\tau(n+1)\}\times \alg{A}^{*\flat}).
\]
It follows that $\varphi(E_i)\cap(\{\tau(n+1)\}\times \alg{A}^{*\flat})\neq \varnothing$ for $i\in\{1,2\}$.
\end{proof}

It follows from Theorem \ref{MorphismsCharacterization} that, if $\alg{B}^*$ has the FCO property and $\alg{B}^{*\flat}/{\sim}=\{E_1, \ldots, E_n\}$, then
\begin{equation}\label{eqn:decomp}
\cate{X}(\alg{B}^*, n\alg{A}^*)/{\equiv_d} \cong\ \cate{X}(\alg{E_1^\#}, \alg{A}^*) \times \cdots \times \cate{X}(\alg{E_n^\#}, \alg{A}^*)\ \cong\ \cate{X}(\alg{B}^*, \alg{A}^*).
\end{equation}
In what follows, we may use these isomorphims without further notice.

In the finite case, we can now improve the characterization of Theorem \ref{MaxDisjUnion} thanks to Theorem \ref{MorphismsCharacterization}. First, we introduce some notation.

\begin{nota}\label{nota:heavy}
Let $\alg{A}$ and $\alg{B}$ be finite algebras of $\var{A}$ and denote by $E_1, \ldots, E_\ell$ the elements of $\alg{B}^{*\flat}/{\sim}$. For any $i\leq \ell$ and any $\varphi \in \cate{X}(\alg{E_i^\#}, \alg{A}^*)$, define $c_\varphi$ by
\[
c_\varphi=\begin{cases}
0 & \text{if } \varphi(\alg{E_i^\#}) \subseteq \mathcal{C}^{\alg{A}^*}\\
1 & \text{else.}
\end{cases}
\]
%\[
%c_\varphi=\begin{cases}
%1 & \text{if } \varphi(\alg{E_i^\#}) \subseteq \mathcal{C}^{\alg{A}^*}\\
%0 & \text{else.}
%\end{cases}
%\]
Denote by $\Pi_{\alg{B}^*\alg{A}^*}$ the Cartesian product $\prod\{\cate{X}(\alg{E_i^\#}, \alg{A}^*) \mid i \leq\ell\}$, and for any $\mbf{\varphi}\in  \Pi_{\alg{B}^*\alg{A}^*}$ set $c_{\mbf{\varphi}}:=c_{\varphi_1}+\cdots +c_{\varphi_\ell}$. We know by \eqref{eqn:decomp} that there is a bijective correspondence between $\cate{X}(\alg{B}^*,\alg{A}^*)$ and $\Pi_{\alg{B}^*\alg{A}^*}$
\end{nota}

\begin{cor}\label{Top-Down}
Let $\alg{A}$ and $\alg{B}$ be finite algebras of $\var{A}$. Using Notation \ref{nota:heavy}, we have
\begin{equation*}
\mbf{\homi}_{\alg{A}\alg{B}} \ \cong\  \biguplus\{\alg{\Pi}^\partial_{c_{\mbf{\varphi}}} \mid \mbf{\varphi}\in\Pi_{\alg{B}^*\alg{A}^*}\}.
\end{equation*}
\end{cor}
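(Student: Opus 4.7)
The plan is to derive this corollary from Theorem \ref{MaxDisjUnion} by exhibiting a bijection between the set $K$ of maximal classes in $\mbf{\homi}_{\alg{A}\alg{B}}$ and $\Pi_{\alg{B}^{*}\alg{A}^{*}}$ that sends $[f]$ to a tuple with $c_{\mbf{\varphi}}=\ess(f)$. Finiteness of $\alg{B}$ makes $\alg{B}^{*}$ finite, hence endowed trivially with the FCO property, and Proposition \ref{prop:bbound} then bounds $\ess(\homi_{\alg{A}\alg{B}})$ by $\ell:=|\alg{B}^{*\flat}/{\sim}|$. Consequently Theorem \ref{MaxDisjUnion} applies and yields
\[
\mbf{\homi}_{\alg{A}\alg{B}}\ \simeq\ \biguplus_{[f]\in K}\alg{\Pi}^{\partial}_{\ess(f)},
\]
so the statement reduces to the construction of the promised bijection. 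By Corollary \ref{cor:iso1}, I carry out the construction in the dual category and match $\Pi_{\alg{B}^{*}\alg{A}^{*}}$ with the maximal classes of $\mbf{\cate{X}}_{\alg{B}^{*}\alg{A}^{*}}$.

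Given a tuple $\mbf{\varphi}=(\varphi_1,\ldots,\varphi_\ell)\in\Pi_{\alg{B}^{*}\alg{A}^{*}}$, I build a morphism $\widetilde{\mbf{\varphi}}\colon\alg{B}^{*}\to c_{\mbf{\varphi}}\alg{A}^{*}$ by fixing any bijection $\iota\colon\{i:c_{\varphi_i}=1\}\to[c_{\mbf{\varphi}}]$, setting $\widetilde{\mbf{\varphi}}(x)=(\iota(i),\varphi_i(x))$ for $x\in E_i$ when $c_{\varphi_i}=1$, setting $\widetilde{\mbf{\varphi}}(x)=\varphi_i(x)\in\mathcal{C}^{c_{\mbf{\varphi}}\alg{A}^{*}}$ for $x\in E_i$ when $c_{\varphi_i}=0$, and extending by the constants on $\mathcal{C}^{\alg{B}^{*}}$. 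Part (\ref{itmdvf01}) of Theorem \ref{MorphismsCharacterization} ensures that $\widetilde{\mbf{\varphi}}$ is a morphism, and part (\ref{itmdvf02}) ensures that its $\equiv_d$-class is maximal; by construction $\ess_d(\widetilde{\mbf{\varphi}})=c_{\mbf{\varphi}}$. Any two choices of $\iota$ differ by a permutation of the copies of $\alg{A}^{*\flat}$, so $[\widetilde{\mbf{\varphi}}]_d$ does not depend on $\iota$, and the assignment $\mbf{\varphi}\mapsto[\widetilde{\mbf{\varphi}}]_d$ is well defined.

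For the converse, every maximal $\equiv_d$-class is represented by some $\psi\colon\alg{B}^{*}\to n\alg{A}^{*}$ without inessential co-arguments. By part (\ref{itmdvf01}) of Theorem \ref{MorphismsCharacterization}, each restriction $\psi{\restriction}_{\alg{E_i^{\#}}}$ lands either in a single copy of $\alg{A}^{*}$ or entirely in the constants, and composing with the projection onto $\alg{A}^{*}$ yields a tuple $\mbf{\psi}\in\Pi_{\alg{B}^{*}\alg{A}^{*}}$ through the isomorphism \eqref{eqn:decomp}. Part (\ref{itmdvf02}) forces the non-constant components of $\mbf{\psi}$ to occupy pairwise distinct copies, so $\psi$ and $\widetilde{\mbf{\psi}}$ agree up to a permutation of co-arguments, giving $[\psi]_d=[\widetilde{\mbf{\psi}}]_d$. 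Injectivity of $\mbf{\varphi}\mapsto[\widetilde{\mbf{\varphi}}]_d$ is immediate since $\equiv_d$ only permutes copies and cannot alter the underlying restrictions to the various $\alg{E_i^{\#}}$. Combining this bijection with Theorem \ref{MaxDisjUnion} concludes the proof.

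The chief obstacle is the converse direction and its bookkeeping: one has to confirm that the extraction of a tuple from a maximal morphism is canonical modulo the slack allowed by $\equiv_d$, and that distinct tuples indeed produce $\equiv_d$-inequivalent maximal morphisms. Both facts follow from the structural content of Theorem \ref{MorphismsCharacterization}(\ref{itmdvf02}) together with the decomposition \eqref{eqn:decomp}, but they must be kept carefully in sight throughout.
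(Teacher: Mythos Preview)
Your proof is correct and follows precisely the route the paper leaves implicit: finiteness of $\alg{B}$ gives the FCO property, so Theorem~\ref{MaxDisjUnion} applies, and then Theorem~\ref{MorphismsCharacterization} together with the decomposition~\eqref{eqn:decomp} set up the bijection between maximal classes and tuples $\mbf{\varphi}\in\Pi_{\alg{B}^{*}\alg{A}^{*}}$ with $\ess_d(\widetilde{\mbf{\varphi}})=c_{\mbf{\varphi}}$. One small bookkeeping point: in your definition of $\widetilde{\mbf{\varphi}}$, when $c_{\varphi_i}=1$ you should still send those $x\in E_i$ with $\varphi_i(x)\in\mathcal{C}^{\alg{A}^{*}}$ to the corresponding constant in $\mathcal{C}^{c_{\mbf{\varphi}}\alg{A}^{*}}$ rather than to the pair $(\iota(i),\varphi_i(x))$, since the carrier of $c_{\mbf{\varphi}}\alg{A}^{*}$ only contains pairs $(j,y)$ with $y\in\alg{A}^{*\flat}$; this does not affect the argument.
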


\begin{cor}\label{Bottom-up}
Let $\alg{A},\alg{B}$ be finite elements of $\var{A}$ and let $E_1, \dots, E_n$ be the elements of $\alg{A}^{*\flat}/{\sim}$. 
Then 
\[\mbf{\homi}_{\alg{A}\alg{B}} \ \cong \ \biguplus \{\mbf{\Pi}^\partial_{d_\varphi}\mid \varphi\in\cate{X}(\alg{B}^*,\alg{A}^*)\},\]
where $d_\varphi=\#\{E_i \mid \varphi(E_i) \cap \alg{A}^{*\flat}\neq \emptyset\}$ for any $\varphi\in\cate{X}(\alg{B}^*,\alg{A}^*)$.
\end{cor}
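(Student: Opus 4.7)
The plan is to derive Corollary \ref{Bottom-up} from Corollary \ref{Top-Down} via the canonical bijection \eqref{eqn:decomp}. Since $\alg{A}$ and $\alg{B}$ are finite, so is $\alg{B}^*$, and hence $\alg{B}^*$ trivially satisfies the FCO property, so that \eqref{eqn:decomp} applies. Writing $E_1, \ldots, E_\ell$ for the $\sim$-classes of $\alg{B}^{*\flat}$, the bijection of \eqref{eqn:decomp} identifies a morphism $\varphi \in \cate{X}(\alg{B}^*, \alg{A}^*)$ with the tuple $\mbf{\varphi} := \bigl(\varphi{\restriction}_{\alg{E}_1^\#}, \ldots, \varphi{\restriction}_{\alg{E}_\ell^\#}\bigr) \in \Pi_{\alg{B}^*\alg{A}^*}$.

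The second step is to verify that the statistics introduced in Notation \ref{nota:heavy} and in the statement of Corollary \ref{Bottom-up} agree under this bijection. By definition, $c_{\mbf{\varphi}} = \sum_{i=1}^{\ell} c_{\varphi{\restriction}_{\alg{E}_i^\#}}$, and $c_{\varphi{\restriction}_{\alg{E}_i^\#}} = 1$ precisely when $\varphi(\alg{E}_i^\#) \not\subseteq \mathcal{C}^{\alg{A}^*}$. Because $\varphi$ preserves constants (so $\varphi(\mathcal{C}^{\alg{B}^*}) \subseteq \mathcal{C}^{\alg{A}^*}$), this last condition is equivalent to $\varphi(E_i) \cap \alg{A}^{*\flat} \neq \varnothing$. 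Summing over $i$ then yields $c_{\mbf{\varphi}} = d_\varphi$.

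Finally, substituting into the disjoint union in Corollary \ref{Top-Down} produces the claimed formula for $\mbf{\homi}_{\alg{A}\alg{B}}$. I do not anticipate any real obstacle: the result is essentially a repackaging of the Top-Down characterization in terms of a single morphism $\varphi \colon \alg{B}^* \to \alg{A}^*$ rather than a tuple of restrictions, and the only point that requires a line of argument is the identification of the combinatorial statistics $c_{\mbf{\varphi}}$ and $d_\varphi$ via \eqref{eqn:decomp}.
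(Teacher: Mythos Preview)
Your proof is correct and follows exactly the intended derivation: the paper states Corollary \ref{Bottom-up} without proof as an immediate consequence of Corollary \ref{Top-Down} via the identification \eqref{eqn:decomp}, and you have spelled out precisely that argument, including the key check that $c_{\mbf{\varphi}} = d_\varphi$. Note only that the statement as printed has ``$\alg{A}^{*\flat}/{\sim}$'' where ``$\alg{B}^{*\flat}/{\sim}$'' is clearly meant (since $\varphi\colon \alg{B}^*\to\alg{A}^*$ must be applied to $E_i$, and this reading is confirmed by the application in Proposition \ref{prop:tadamm}); your proof tacitly and correctly works with the $\sim$-classes of $\alg{B}^{*\flat}$.
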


We now give a number of applications of Corollary \ref{Top-Down} and \ref{Bottom-up}. 
Recall that a finite algebra $\alg{M}$ is \emph{quasi-primal} if it has the ternary discriminator operation
\[
t^M(x,y,z):=\begin{cases}
x & \text{if } x\neq y\\
z & \text{if } x=y
\end{cases}
\]
as a term function. \emph{Semi-primal} algebras are those quasi-primal algebras that have no isomorphism between their non-trivial subalgebras other than the identity. The finite subalgebras  $\alg{\lucas_n}$ of the standard MV-algebra $[0,1]$ are examples of semi-primal algebras. For any semi-primal algebra $\underline{\alg{M}}$, there is structure $\utilde{\alg{M}}$ that has neither (partial) functions nor  $n$-ary relations for $n\geq 2$ and yields a logarithmic duality for $\classop{ISP}(\underline{\alg{M}})$  \cite[Theorem 3.3.14]{NatDual}.

\begin{prop}\label{prop:tadamm}
Assume that $\underline{\alg{M}}$ is a semi-primal algebra and that $\utilde{\alg{M}}$ is a dualizing structure defined as above. For any finite elements $\alg{A}, \alg{B} \in \var{A}$, we have
\begin{equation*}\label{eqn:mfr}
\mbf{\homi}_{\alg{A}\alg{B}} \cong \quad  \biguplus \{\Pi^\partial_{d_\varphi} \mid \varphi \in \cate{X}(\alg{B}^*, \alg{A}^*)\},
\end{equation*}
where $d_\varphi=|\alg{B}^{*\flat}|-|\varphi^{-1}(\mathcal{C}^{\alg{A}^*})\cap \alg{B}^{*\flat}|$ for every $\varphi \in \cate{X}(\alg{B}^*, \alg{A}^*)$.
\end{prop}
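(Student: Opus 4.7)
The plan is to obtain this proposition as a direct specialization of Corollary \ref{Bottom-up}, keyed on the observation that under the stated hypotheses the equivalence relation $\sim$ on $\alg{B}^{*\flat}$ collapses to the identity.

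First I would check that $\alg{B}^*$ enjoys the FCO property, which is the standing hypothesis of Corollary \ref{Bottom-up}. Since $\underline{\alg{M}}$ is finite and $\alg{B}$ is finite, $\alg{B}^*$ is a closed substructure of the finite discrete space $\utilde{\alg{M}}^B$, hence itself finite and discrete. In particular $\alg{B}^{*\flat}/{\sim}$ is finite and each set $E\cup\mathcal{C}^{\alg{B}^*}$ is automatically clopen, so FCO holds trivially.

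Second, I would verify that $\sim$ is the identity on $\alg{B}^{*\flat}$. By the hypothesis on $\utilde{\alg{M}}$, the set $\mathcal{G}\cup\mathcal{H}$ consists only of the constants and no relation in $\mathcal{R}$ has arity $\geq 2$. Consequently, all relations $r_i$ appearing in Lemma \ref{lem:awf} are unary, each tuple $\mathbf{x}^i$ reduces to a singleton $(x^i_1)$, and the overlap condition forces $x^j_1 = x^{j+1}_1$ throughout any admissible chain; hence $x=y$. Equivalently, the smallest complete substructure generated by $\{x\}$ is simply $\{x\}\cup\mathcal{C}^{\alg{B}^*}$, so $\struc{\{x\}}=\struc{\{y\}}$ precisely when $x=y$.

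With these two facts in hand, Corollary \ref{Bottom-up} immediately yields
\[
\mbf{\homi}_{\alg{A}\alg{B}} \cong \biguplus\{\mbf{\Pi}^\partial_{d_\varphi} \mid \varphi \in \cate{X}(\alg{B}^*,\alg{A}^*)\},
\]
where $d_\varphi$ counts the singleton $\sim$-classes $\{u\}\subseteq\alg{B}^{*\flat}$ whose $\varphi$-image meets $\alg{A}^{*\flat}$. A class $\{u\}$ contributes precisely when $\varphi(u)\notin\mathcal{C}^{\alg{A}^*}$, so this count equals $|\alg{B}^{*\flat}|-|\varphi^{-1}(\mathcal{C}^{\alg{A}^*})\cap\alg{B}^{*\flat}|$, matching the stated formula. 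There is no serious obstacle here: the proposition is essentially a reading of Corollary \ref{Bottom-up} in the extremal case where $\sim$ is as coarse as possible. The only point requiring genuine attention is applying Lemma \ref{lem:awf} carefully to confirm that the absence of higher-arity relations and of non-constant (partial) operations really forces $\sim$ to be trivial on $\alg{B}^{*\flat}$; once that is verified, no further computation is needed.
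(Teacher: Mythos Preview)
Your argument is correct and follows essentially the same route as the paper: observe that the hypotheses on $\utilde{\alg{M}}$ force $\sim$ to be the identity on $\alg{B}^{*\flat}$, then read off the decomposition from the general characterization results. The paper invokes Theorem~\ref{MorphismsCharacterization} directly and is terser, whereas you go through Corollary~\ref{Bottom-up} and spell out the FCO verification and the use of Lemma~\ref{lem:awf}; these amount to the same proof. One minor slip: the identity relation is the \emph{finest} equivalence, not the coarsest.
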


\begin{proof}
It follows from the assumptions on $\utilde{\alg{M}}$ that every singleton is an equivalence class of $\sim$ on $\alg{B}^{*\flat}$. According to Theorem \ref{MorphismsCharacterization}, this means that every morphism $\varphi\colon\alg{B}^* \to \alg{A}^*$ defines a corresponding maximal element $[\varphi']_d$ in $\mbf{\cate{X}}_{\alg{B}^*\alg{A}^*}$  where $\varphi'$ has co-essential arity $d_\varphi$.
\end{proof} 

\begin{exam} \label{ex:lucas}
For every $m\geq 1$, the algebra $\lucas_m$ is semi-primal and a dualizing structure is given in \eqref{eqn:duallucas}. For the sake of illustration, set $m=12$ and consider the algebra $\alg{A}:=\lucas_2\times\lucas_4\times\lucas_4\times \lucas_{6}$ of $\classop{ISP}(\lucas_{12})$. Then $\alg{A}^*$ is the discrete structure 
\[
\struc{\{u_1, \ldots, u_4\}, \{r_m\mid m\in \divi(12)\}, \mathcal{T}_{dis}},
\]
where
\begin{gather*}
r_1=r_3=\varnothing, \quad r_2=\{u_1\}, \quad r_4=\{u_1, u_2, u_3\},\\
 r_6=\{u_1, u_4\}, \quad r_{12}=\{u_1, \ldots, u_4\}.
\end{gather*}
It follows that morphisms $\varphi\colon \alg{A}^*\to \alg{A}^*$ are defined by the constraints
\[
\varphi(u_1)=u_1, \quad \{\varphi(u_2), \varphi(u_3)\}\subseteq \{u_1, u_2, u_3\}, \quad \varphi(u_4)\in\{u_1, u_4\},
\]
and $d_\varphi=|\alg{A}^*|=4$ for every $\varphi$ since there is no constant in $\utilde{\lucas}_m$. We obtain by Proposition \ref{prop:tadamm} that the minor homomorphism poset $\mbf{\mathcal{MV}_\alg{A}}$ is the disjoint union of 18 copies of $\Pi^\partial_4$.
\end{exam}

The argument developed in Example \ref{ex:lucas} leads us to the following result.
\begin{prop}
Let $\{p_1, \ldots, p_k\}$ and $\{q_1, \ldots, q_k\}$ be two sets of prime numbers and $\alpha_1, \ldots, \alpha_k \geq 1$. Set $m:=p_1^{\alpha_1} \times \cdots \times p_k^{\alpha_k}$ and $m':={q_1}^{\alpha_1} \times \cdots \times q_k^{\alpha_k}$, and define the map $\mu\colon \divi(m) \to \divi(m')$ by 
\[
\mu(p_1^{\beta_1} \times \cdots \times p_k^{\beta_k})=q_1^{\beta_1} \times \cdots \times q_k^{\beta_k}
\]
Then, for every $m_1, \ldots, m_t\in \divi(m)$, it holds that
\[
\mbf{\mathcal{MV}}_{\lucas_{m_1}\times \cdots \times \lucas_{m_t}} \ \cong \ 
\mbf{\mathcal{MV}}_{\lucas_{\mu(m_1)}\times \cdots \times \lucas_{\mu(m_t)}}.
\]
\end{prop}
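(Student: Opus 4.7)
The plan is to reduce the isomorphism to comparing the sizes of two endomorphism sets in the dual category, and then to use the fact that $\mu$ is a poset-isomorphism between the divisibility lattices of $m$ and $m'$.

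First, I would apply Proposition \ref{prop:tadamm}. Since the dualizing structure $\utilde{\lucas}_m$ from \eqref{eqn:duallucas} carries only unary relations, it has no constants, so $\mathcal{C}^{\alg{X}}=\varnothing$ for every $\alg{X}$ in the dual category. Setting $\alg{A}:= \lucas_{m_1}\times\cdots\times\lucas_{m_t}$ and $\alg{A}':= \lucas_{\mu(m_1)}\times\cdots\times\lucas_{\mu(m_t)}$, Proposition \ref{prop:tadamm} thus yields
\[
\mbf{\mathcal{MV}}_{\alg{A}} \ \cong \ \biguplus\{\Pi^\partial_t \mid \varphi\in\cate{X}(\alg{A}^*,\alg{A}^*)\}
\]
and the analogous identity for $\alg{A}'$, since in every summand $d_\varphi = |\alg{A}^{*\flat}| = t$. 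The proposition therefore reduces to producing a bijection between $\cate{X}(\alg{A}^*,\alg{A}^*)$ and $\cate{X}(\alg{A}'^*,\alg{A}'^*)$.

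Next, I would describe both duals explicitly, following the pattern of Example \ref{ex:lucas}. Because $\underline{\lucas}_m$ is simple, every homomorphism $\alg{A}\to\underline{\lucas}_m$ factors as an inclusion after a projection, so $\alg{A}^* = \{u_1,\ldots,u_t\}$ and $\alg{A}'^* = \{u'_1,\ldots,u'_t\}$. Using $\im(u_i)=\lucas_{m_i}$ and $r_d^{\utilde{\lucas}_m} = \lucas_d$, a short check gives
\[
u_i \in r_d^{\alg{A}^*} \iff m_i \mid d \qquad \text{and} \qquad u'_i \in r_{d'}^{\alg{A}'^*} \iff \mu(m_i) \mid d',
\]
for $d\in\divi(m)$ and $d'\in\divi(m')$. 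Consequently, a function $\varphi\colon [t]\to[t]$ belongs to $\cate{X}(\alg{A}^*,\alg{A}^*)$ iff $m_{\varphi(i)} \mid m_i$ for every $i\in[t]$, and belongs to $\cate{X}(\alg{A}'^*,\alg{A}'^*)$ iff $\mu(m_{\varphi(i)}) \mid \mu(m_i)$ for every $i\in[t]$.

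The remaining step, which I regard as the crux but not really an obstacle, is the observation that $\mu$ preserves divisibility in both directions: writing $a = \prod p_i^{\beta_i}$ and $b = \prod p_i^{\gamma_i}$, both $a \mid b$ and $\mu(a) \mid \mu(b)$ are equivalent to $\beta_i \leq \gamma_i$ for every $i$. Hence the two preservation conditions on $\varphi$ coincide, the identity on $[t]$ induces the sought bijection between the endomorphism sets, and the desired poset isomorphism follows. The underlying reason is simply that the dual of $\lucas_{m_1}\times\cdots\times\lucas_{m_t}$ depends on the tuple $(m_1,\ldots,m_t)$ only through the divisibility relations among its entries, and $\mu$ transports these relations verbatim.
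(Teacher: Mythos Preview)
Your proposal is correct and follows exactly the approach the paper intends: the paper does not give a separate proof but simply states that the argument of Example~\ref{ex:lucas} leads to the result, and what you wrote is precisely that argument spelled out---apply Proposition~\ref{prop:tadamm}, note that the absence of constants forces $d_\varphi=t$ for every $\varphi$, and then observe that the endomorphism sets of the two duals coincide because $\mu$ is an isomorphism of divisibility posets.
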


\begin{exam}
Corollary \ref{Top-Down} helps to define a systematic technique to compute $\mbf{\var{D}}_{\alg{L}\alg{L}'}$ for finite distributive lattices  $\alg{L}$ and $\alg{L}'$. Indeed, it turns out that ${\alg{L}'}^{*\flat}/{\sim}$ is the set $\{E_1, \ldots, E_\ell\}$ of the connected components of the Hasse diagram of ${\alg{L}'}^{*\flat}$ considered as an undirected graph. Maximal elements in $\mbf{\cate{X}}_{\alg{L}^*{\alg{L}'}^{*}}$ correspond via \eqref{eqn:decomp} to tuples $\mbf{\varphi}:=(\varphi_1, \ldots, \varphi_\ell)$ such that $\varphi \in \cate{X}(\alg{E_i^\#}, \alg{L}^*)$ for every $i\leq \ell$, that is, to tuples of partial morphisms on ${\alg{L}'}^*$ with maximal domains. Each such tuple corresponds to a maximal element of co-essential arity $c_{\mbf{\varphi}}$ and we obtain
\[
\mbf{\cate{X}}_{\alg{L}^*{\alg{L}'}^{*}}\ \cong \ \biguplus\{\alg{\Pi}^\partial_{c_{\mbf{\varphi}}} \mid \varphi \in \cate{X}(\alg{E_1^\#}, \alg{L}^*)\times \cdots \times \cate{X}(\alg{E_\ell^\#}, \alg{L}^*) \}
\]
by Corollary \ref{Top-Down}.
\end{exam}

If $\alg{B}$ is a Boolean algebra, then its $\{\vee, \wedge\}$-reduct $\alg{B}^-$ is a distributive lattice. It turns out that these reducts, which form the class of \emph{complemented distributive lattices}, can be recognized by their minor homomorphim posets, as shown in the next result. First, recall that the join-irreducible elements of a complemented distributive lattice coincide with its atoms (if there is an atom $a$ strictly below a join irreducible element $b$ and $b^c$ is the complement of $b$ then $\{0,a, b, b^c, 1\}$ is isomorphic to $N_5$).

\begin{prop}\label{prop:complat}
A finite distributive lattice $\alg{L}$ is complemented if and only if $\mbf{\var{D}}_{\alg{L}}$ is isomorphic to %$\biguplus\{\biguplus\{\alg{\Pi}_j^\partial\mid 1\leq i \leq d_j\} \mid 0\leq j \leq n\}$ 
\[
\biguplus_{1\leq j\leq n}\biguplus\{\alg{\Pi}_j^\partial\mid 1\leq i \leq d_j\} 
\]
for some $n\geq 1$, where $d_j:=\binom{n}{j}n^j 2^{n-j}$ for every $j\leq n$.
\end{prop}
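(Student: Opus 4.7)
The plan is to compute $\mbf{\var{D}}_\alg{L}$ via Priestley duality and Corollary~\ref{Top-Down}, leveraging the fact (made explicit at the end of Section~\ref{MinorHomomorphism}) that under the duality of Subsection~\ref{sub:dl} the $\sim$-classes of $\alg{L}^{*\flat}$ coincide with the connected components of the Hasse diagram of $J(\alg{L})$ viewed as an undirected graph. Throughout I keep the notation $m_i^0,m_i^1$ of Notation~\ref{nota:heavy} with $\alg{A}=\alg{B}=\alg{L}$.

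For the necessity direction, if $\alg{L}$ is complemented then $\alg{L}\cong\alg{2}^n$ where $n=|J(\alg{L})|$ and $J(\alg{L})=\{a_1,\dots,a_n\}$ is an antichain of atoms. Hence $\alg{L}^*$ is the $(n+2)$-element bounded poset $\{0,a_1,\dots,a_n,1\}$ with the $a_i$'s pairwise incomparable, so $\alg{L}^{*\flat}/{\sim}$ partitions into the $n$ singletons $E_i=\{a_i\}$. Each $\alg{E_i^\#}$ is a three-element chain $0<a_i<1$, so a morphism in $\cate{X}(\alg{E_i^\#},\alg{L}^*)$ is determined by the image of $a_i$, giving $n+2$ possibilities: exactly $2$ with $c_\varphi=0$ (when $\varphi(a_i)\in\{0,1\}$) and $n$ with $c_\varphi=1$ (when $\varphi(a_i)\in J(\alg{L})$). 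The number of tuples $\mbf{\varphi}\in\prod_i\cate{X}(\alg{E_i^\#},\alg{L}^*)$ with $c_{\mbf{\varphi}}=j$ is therefore $\binom{n}{j}n^j 2^{n-j}=d_j$, and Corollary~\ref{Top-Down} yields the announced decomposition.

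Conversely, assume $\mbf{\var{D}}_\alg{L}$ has the stated shape for some $n\geq 1$ and let $k:=|\alg{L}^{*\flat}/{\sim}|$. By Corollary~\ref{Top-Down}, the number of maximal classes of essential arity $j$ in $\mbf{\var{D}}_\alg{L}$ equals $[x^j]\prod_{i=1}^k(m_i^0+m_i^1 x)$, so matching with the generating function of the $d_j$'s yields the polynomial identity
\[
\prod_{i=1}^k(m_i^0+m_i^1 x)=(2+nx)^n.
\]
Since the inclusion $E_i^\#\hookrightarrow\alg{L}^*$ is a morphism with $c_\varphi=1$, each $m_i^1\geq 1$, and the polynomial on the left has degree exactly $k$. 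Comparing degrees forces $k=n$, and comparing constant terms gives $\prod_{i=1}^n m_i^0=2^n$.

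The decisive step is to promote this arithmetic identity into a structural statement about $J(\alg{L})$. Order-preserving maps $\alg{E_i^\#}\to\{0,1\}$ preserving both bounds correspond bijectively to downsets of $E_i$ via $\varphi\mapsto\varphi^{-1}(\{0\})\cap E_i$, so $m_i^0$ equals the number of downsets of $E_i$. Any nonempty poset has at least $2$ downsets, and a third distinct downset exists as soon as $|E_i|\geq 2$ (take for example the down-closure of a fixed minimal element of $E_i$, which is proper since $|E_i|\geq 2$). Since $\prod_{i=1}^n m_i^0=2^n$ with $n$ factors each $\geq 2$, every $m_i^0$ must equal $2$ and therefore every $|E_i|=1$. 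Consequently $J(\alg{L})$ carries no cover relation and is an antichain, so by Birkhoff duality $\alg{L}$ is isomorphic to the lattice of all downsets of $J(\alg{L})$, namely $\alg{2}^n$, whence $\alg{L}$ is complemented. The principal obstacle lies precisely in this translation from the polynomial identity to the claim $|E_i|=1$; everything else is a direct application of Corollary~\ref{Top-Down}.
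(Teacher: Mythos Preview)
Your forward direction is correct and essentially identical to the paper's argument.

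For the converse you take a different and attractive route, via a generating-function identity together with the observation that $m_i^0$ counts the downsets of $E_i$. However, there is a genuine gap in justifying the identity $\prod_{i}(m_i^0+m_i^1 x)=(2+nx)^n$. The hypothesis is only a \emph{poset} isomorphism, and since $\alg{\Pi}_0^\partial$ and $\alg{\Pi}_1^\partial$ are both one-point posets, from the shape of $\mbf{\var{D}}_\alg{L}$ you can read off the number of maximal classes of essential arity $j$ only for $j\geq 2$, together with the \emph{combined} count for $j\in\{0,1\}$. You therefore cannot isolate the constant term $\prod_i m_i^0$, and the step ``comparing constant terms gives $\prod_i m_i^0=2^n$'' is unjustified. (The statement of the proposition is itself imprecise about the $j=0$ contribution, but that does not rescue the argument: even under the natural correction including a $d_0$ term, the poset still only determines $d_0+d_1$.)

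The paper's proof sidesteps this by using only the top-degree datum $d_n=n^n$, which for $n\geq 2$ is unambiguously visible in the poset, and then exhibits $n^n+1$ pairwise non-equivalent maximal morphisms of co-essential arity $n$ as soon as some $E_i$ has two comparable elements. Your approach can be repaired in the same spirit without abandoning the generating-function viewpoint: for each $e\in\alg{L}^{*\flat}$ the map $E_i^\#\to\alg{L}^*$ sending every element of $E_i$ to $e$ (and fixing the constants) is a morphism with $c_\varphi=1$, so $m_i^1\geq|\alg{L}^{*\flat}|\geq n$; since $\prod_i m_i^1=[x^n]P(x)=d_n=n^n$, this forces $|\alg{L}^{*\flat}|=n$ and hence every $|E_i|=1$. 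This uses only the leading coefficient, exactly the information that \emph{is} recoverable from the poset.
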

\begin{proof}
Let $\alg{L}$ be a finite complemented distributive lattice and $a_1, \ldots, a_n$ be its atoms. Then $\alg{L}^*$ is isomorphic to the antichain $\{a_1, \ldots, a_n\}$ together with the constants $1^{\alg{L}^*}=L$ and $0^{\alg{L}^*}=\varnothing$ as top and bottom element, respectively. It follows that the equivalence $\sim$ on $\alg{L}^{*\flat}$ is the identity, just as in the Stone dual of the Boolean algebra associated with $\alg{L}$. But, on the contrary to the case of Boolean algebras where the dual structures have no constants, morphisms can map elements $a_i$ to constants $0$ or $1$, which influences the number of morphisms and their co-essential arities. 

Let $j\leq n$ and let us count the number of morphisms $\varphi\colon \alg{L}^* \to j\alg{L}^*$ with co-essential arity $j$ such that $[\varphi]_d$ is maximal. Such a morphism $\varphi$ maps $j$ elements among ${a_1, \ldots, a_n}$ to distinct copies of $\alg{L}^{*\flat}$ in $j\alg{L}^*$ and $(n-j)$ elements to constants $0$ or $1$. So, there are $d_j:=\binom{n}{j}n^j 2^{n-j}$ such morphisms and each of them  satisfies $d_{{\varphi}}=j$. The conclusion follows from Corollary \ref{Bottom-up}.

Conversely, assume that $\alg{L}$ is a distributive lattice whose homomorphism poset contains exactly $d_j$ disjoint copies of $\alg{\Pi}_j^\delta$ for each $j\in\{1, \ldots, n\}$. It follows from Theorem \ref{MaxDisjUnion} that the maximum co-essential arity of an element $\varphi\in \cate{X}_{\alg{L}^*}$ is $n$  and that the number of non $\equiv_d$-equivalent morphisms $\phi\colon \alg{L}^* \to n\alg{L}^*$ of co-essential arity $n$ is equal to $d_n=n^n$. Moreover, we have $|\alg{L}^{*\flat}/{\sim}|=n$ by Proposition \ref{prop:bbound}. 
Now, we prove that the elements $E_1, \ldots, E_n$ of $\alg{L}^{*\flat}/{\sim}$ are singletons. For the sake of contradiction, assume that $E_1$ contains two elements $u$ and $v$. By definition of $\sim$, we may assume $u< v$. The map $\varphi_1\colon \alg{L}^* \to n\alg{L}^*$ defined by 
\[
\varphi_1(z)=
\begin{cases}
0 & \text{if } z\leq u\\
(1,z) & \text{if } z\in E_1 \text{ and } z\not\leq u\\
(i,z) & \text{if } i\neq 1 \text{ and } z \in E_i,
\end{cases}
\]
is a morphism of co-essential arity $n$. Now, for every $i\leq n$, let $e_i$ be a fixed element of $E_i$. For every $h\in [n]^{[n]}$, define the map $\varphi_h\colon \alg{L}^* \to n\alg{L}^*$ by $\varphi_h(E_i)=\{(i, e_{h(i)})\}$. 
 Together with $\varphi_1$, we have found $n^n+1$ pairwise non $\equiv_d$-equivalent morphisms $\alg{L}^* \to n\alg{L}^*$ of co-essential arity $n$, a contradiction. 
\end{proof}

Finite median algebra are handled similarly as finite distributive lattices. 
\begin{prop}\label{prop:medmed}
Let $\alg{A}$ be a median algebra.
\begin{enumerate}
\item\label{bgf:01} If $\alg{A}$ is the $\me$-reduct of a distributive lattice, then $\ess(f)\leq 1$ for every $f\in{\var{M}}_{\alg{A}}$.
\item\label{bgf:02} $\alg{A}$ is the $\me$-reduct of a finite Boolean algebra if and only if 
\[
\mbf{\var{M}}_{\alg{A}}\cong \biguplus_{1\leq i \leq k}\biguplus\{\mbf{\Pi}^\partial_i\mid j\leq \binom{k}{i}2^k k^i \} 
\] holds for some $k\geq 1$.
\end{enumerate}
\end{prop}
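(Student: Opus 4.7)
The plan is to dualize both statements via Corollary~\ref{cor:iso1} and to apply Proposition~\ref{prop:bbound} together with Corollary~\ref{Bottom-up}, following the pattern set out for Boolean algebras in Example~\ref{ex:boolfin} and for complemented distributive lattices in Proposition~\ref{prop:complat}.

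For part~(\ref{bgf:01}), I would show that $|\alg{A}^{*\flat}/{\sim}| \leq 1$ when $\alg{A}$ is the $\me$-reduct of a distributive lattice $\alg{D}$. The non-constant elements of $\alg{A}^*$ are the non-constant median morphisms $\alg{D} \to \underline{\alg{2}}$, and these split as non-constant DL morphisms together with their $\cdot^c$-complements. Since the graph of the unary operation $\cdot^c$ is among the relations of $\utilde{\alg{2}}$, every DL morphism $f$ is already $\sim$-related to its negation $\neg f = f^c$. Then any two non-constant DL morphisms are $\sim$-connected by a zig-zag through the Priestley order on the dual of $\alg{D}$, occasionally passing through complements, exploiting the lattice-theoretic structure of prime filters of $\alg{D}$. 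Once $|\alg{A}^{*\flat}/{\sim}| \leq 1$ is secured, Proposition~\ref{prop:bbound} yields $\ess(f) \leq 1$ for every $f \in \var{M}_\alg{A}$.

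For the forward direction of part~(\ref{bgf:02}), I set $\alg{A} = (\alg{2}^k, \me)$ and compute $\alg{A}^*$ by enumeration: the non-constant median morphisms $\alg{2}^k \to \underline{\alg{2}}$ are exactly the $k$ coordinate projections $\pi_1, \ldots, \pi_k$ and their complements $\neg \pi_i$; on $\alg{A}^{*\flat}$ the Priestley order is an antichain, while $\cdot^c$ pairs $\pi_i$ with $\neg \pi_i$. Hence the $\sim$-classes are the $k$ two-element sets $E_i = \{\pi_i, \neg \pi_i\}$. By Theorem~\ref{MorphismsCharacterization}(\ref{itmdvf01}), a morphism $\varphi \in \cate{X}(\alg{A}^*, \alg{A}^*)$ is determined by choosing, for each $E_i$, either one of two constant targets ($0$ or $1$) or one of $2k$ non-constant targets (a class $E_j$ together with a choice whether to swap by $\cdot^c$). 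Counting those with exactly $j$ non-constant choices gives $\binom{k}{j}(2k)^j 2^{k-j} = \binom{k}{j} 2^k k^j$ morphisms of co-essential arity $j$, and Corollary~\ref{Bottom-up} assembles them into the stated disjoint union.

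For the converse of part~(\ref{bgf:02}), assume that $\mbf{\var{M}}_\alg{A}$ has the stated shape for some $k \geq 1$. Theorem~\ref{MaxDisjUnion} gives maximum essential arity $k$, and then Proposition~\ref{prop:bbound} gives $|\alg{A}^{*\flat}/{\sim}| = k$. Following the strategy of Proposition~\ref{prop:complat}, the top multiplicity $(2k)^k$ rigidly pins each $\sim$-class to a two-element pair closed under $\cdot^c$ and forces the Priestley order on $\alg{A}^{*\flat}$ to be trivial. This identifies $\alg{A}^*$ with the median dual of $\alg{2}^k$, and dualization yields $\alg{A} \cong$ the $\me$-reduct of $\alg{2}^k$. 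The main obstacle will be ruling out alternative bounded strongly complemented Priestley spaces (e.g.\ with $\sim$-classes of size other than two, or with non-trivial order relations among non-constant elements) that could accidentally reproduce the same numerical profile; here the sharper counting on maximal elements from Corollary~\ref{Bottom-up} combined with the interaction between $\cdot^c$ and the antichain structure closes the gap, as in the proof of Proposition~\ref{prop:complat}.
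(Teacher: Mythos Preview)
Your plan for part~(\ref{bgf:01}) and for the forward direction of part~(\ref{bgf:02}) coincides with the paper's: describe the median dual explicitly, determine the $\sim$-classes, count morphisms by co-essential arity, and feed the result into Corollary~\ref{Top-Down}. The paper arrives at the same enumeration $\binom{k}{i}(2k)^i 2^{k-i}=\binom{k}{i}2^k k^i$ by exactly the argument you outline.

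For the converse of~(\ref{bgf:02}) the paper takes a shorter route than the one you sketch. You propose to work from the \emph{top}-arity count $(2k)^k$ and mimic the over-counting trick of Proposition~\ref{prop:complat}; you correctly flag that ruling out accidental coincidences will need work, because the factors in that product (the numbers of non-constant morphisms $\alg{E}_i^\#\to\alg{A}^*$) are coupled to one another through the unknown structure of $\alg{A}^*$ itself. The paper instead reads off the count at the \emph{bottom}: the number of tuples $\mbf{\varphi}\in\Pi_{\alg{A}^*\alg{A}^*}$ with $c_{\mbf{\varphi}}=0$ is $2^k$, and by~\eqref{eqn:decomp} this number equals $|(\alg{A}^*)_*|$. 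Combining with the coproduct decomposition $\alg{A}^*\cong\alg{E}_1^\#\oplus\cdots\oplus\alg{E}_k^\#$ gives
\[
2^k \;=\; |(\alg{A}^*)_*| \;=\; \prod_{i=1}^k |(\alg{E}_i^\#)_*|.
\]
Since $\cdot^c$ is fixed-point free on $\alg{A}^{*\flat}$, each $E_i$ contains at least two elements, so $|(\alg{E}_i^\#)_*|\geq 2$; this forces every factor to equal~$2$, whence each $\alg{E}_i^\#$ is the dual of the two-element median algebra and $\alg{A}$ is the $\me$-reduct of $\alg{2}^k$. The obstacle you anticipate simply does not arise here: the bottom count yields a product of \emph{decoupled} factors, each with an a priori lower bound, so no combinatorial balancing argument is needed.
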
 
\begin{proof}
(\ref{bgf:01}) If $\alg{A}$ is the $\me$-reduct of a distributive lattice $\alg{L}=\struc{A, \vee, \wedge}$, then the prime convex subsets of $\alg{A}$ are $A$, $\varnothing$ and the prime filters and prime ideals of $\alg{L}$. It follows that $\alg{A}^*$ is the disjoint union of the poset of prime filters and the poset of prime ideals of $\alg{L}$, with $\varnothing$ and $A$ as additional bottom and top elements, respectively, and where $u^c=A\setminus u$ for any $u\in \alg{A}^*$. It follows that $\sim$ is the total equivalence on $\alg{A}^{*\flat}$, and $\ess_d(\varphi)\leq 1$ for any morphism $\varphi\colon \alg{A}^* \to n\alg{A}^*$.

(\ref{bgf:02}) If $\alg{A}$ is the $\me$-reduct of the Boolean algebra $\alg{B}=\alg{2}^k$ then it follows by (\ref{bgf:01}) that $\alg{A}^*$ is an antichain $u_1, \ldots, u_k, u_1^c, \ldots, u_k^c$ with an additional top and bottom element $1^{\alg{A}^*}=A$ and $0^{\alg{A}^*}=\varnothing$ , respectively. It follows that $\alg{A}^{*\flat}/{\sim}=\{\{u_i, u_i^c\} \mid i\leq k\}$ has cardinality $k$. For each $i\leq k$, there are $2k$ morphisms $\varphi_i \colon \{u_i, u_i^c\}^\# \to \alg{A}^*$ that satisfy $\im(\varphi_i)\neq\{A, \varnothing\}$, and two morphisms $\varphi_i \colon \{u_i, u_i^c\}^\# \to \alg{A}^*$ that satisfy $\im(\varphi_i)=\{A, \varnothing\}$. We conclude that for any $0\leq i \leq k$ there are $\binom{k}{i}(2k)^i2^{k-i}=\binom{k}{i}2^k k^i$ tuples $\mbf{\varphi}\in \Pi_{\alg{A}^*\alg{A}^*}$ with $c_{\mbf{\varphi}}=i$, and we conclude the proof by Corollary \ref{Top-Down}.

Conversely, assume that the minor homomorphism poset of $\alg{A}$ is made of $\binom{k}{i}2^k k^i$ copies of $\alg{\Pi}_i^\partial$ for every $i\in\{0, \ldots, k\}$. In particular, the maximum essential arity of an element of $\mbf{\var{M}}_{\alg{A}}$ is $k$, which implies that $\alg{A}^{*\flat}/{\sim}$ has $k$ elements $E_1$, \ldots, $E_k$. It follows that $\alg{A}^*\cong \alg{E_1^\#}\oplus \cdots \oplus \alg{E_k^\#}$, so that $(\alg{A}^*)_*\cong (\alg{E_1^\#})_*\times \cdots \times (\alg{E_k^\#})_*$. Moreover, there are $2^k$ tuples $\mbf{\varphi}\in \Pi_{\alg{A}^*\alg{A}^*}$ that satisfy $c_{\mbf{\varphi}}=0$, which means that 
\begin{equation}\label{eqn:abacus}
2^k=|(\alg{A}^*)_*|=|(\alg{E_1^\#})_*|\times \cdots \times |(\alg{E_k^\#})_*|
\end{equation}
according to \eqref{eqn:decomp}. Since $|E_i|>1$ for every $i\leq k$, we obtain that $|(\alg{E}_1^\#)_*|=\cdots=|(\alg{E}_k^\#)_*|=2$. It follows that $\alg{E}_i^\#$ is the dual of the $\me$-reduct of the 2 element Boolean algebra, so that $\alg{A}$ is the $\me$-reduct of the $2^k$ element Boolean algebra.
\end{proof}

Example \ref{ex:fini-cofini} shows that  the homomorphism poset of an infinite algebra can get pretty wild. We end the section with an additional example in that direction.

\begin{exam}\label{ex:bool2}
\emph{Let $\alg{\free}_\omega$ be the free Boolean algebra with countably many generators. For any $n\geq 1$ and any $f\in \homi_{\alg{\free}_\omega}$ with essential arity $n$, there are (countably) infinitely many elements $f'\colon  \free^{n+1}_\omega  \to \free_\omega$ such that $f \prec f'$.}
\begin{proof}
The Stone dual of $\alg{\free}_\omega$ is the Cantor space $\Gamma$. Since $f^*\colon \Gamma \to n\Gamma$ has co-essential arity $n$, we know that $Y:=(f^{*})^{-1}(\{n\}\times \Gamma)$ is a nonempty clopen subset of $\Gamma$, so it is homeomorphic to $\Gamma$. Let $\{\omega_i \mid i \in \N\}$ be a countable clopen basis of $Y$.
For every $i\in \N$, the map $\varphi\colon \Gamma \to (n+1)\Gamma$ defined as 
\[
\varphi(u)=\begin{cases}
f^*(u) & \text{if } u\not\in Y  \text{ or } u\not\in \omega_i\\
\big(n+1,\pr_2(f^*(u))\big) & \text{if } u \in \omega_{i}
\end{cases}
\]
is a morphism of co-essential arity $n+1$ and $f^*\prec_d \varphi$.
\end{proof}
\end{exam}

\section{Concluding remarks and further research}
In this paper, we used natural duality theory to investigate the minor relation for algebra homomorphisms. Although our developments are limited to finitely generated quasivarieties that admit a logarithmic duality, we have shown that natural duality theory may turn to be a powerful tool to explore combinatorial problems pertaining to general algebra. Conversely, note that Proposition \ref{prop:tadam}, Corollary \ref{cor:tidim} and Theorems  \ref{thm:weakly} and \ref{MaxDisjUnion} can be used as criteria to test non-dualizability of a finite algebra $\underline{\alg{M}}$ as follows.

\begin{prop} Let $\underline{\alg{M}}$ be a finite algebra. If one of the following conditions is satisfied, then no structure $\utilde{\alg{M}}$ can yield a logarithmic duality for $\var{A}=\classop{ISP}(\underline{\alg{M}})$.
\begin{enumerate}
\item There is a finite algebra $\alg{A}$ in $\var{A}$ whose minor homomorphism poset is not isomorphic to a disjoint union of order dual of partition lattices.
\item There are an algebra $\alg{A}$ in $\var{A}$ and homomorphims $f,g\colon \alg{A}^n \to \alg{A}$ for some $n\geq 2$ such that $\deck(f)=\deck(g)$ but $f\not\equiv g$.
\item There are finite algebras $\alg{A}, \alg{B}$ in $\var{A}$ and a homomorphism $f\colon \alg{A}^n \to \alg{B}$ such that $\ess(f)=n\geq 2$ and $f\circ\delta_I \equiv f\circ\delta_J$ for some $I\neq J$ in $\binom{n}{2}$.
\item There are algebras $\alg{A}, \alg{B} \in \var{A}$ and a homomorphism $f\colon \alg{A}^n \to \alg{B}$ for some $n\geq 2$ whose arity gap is not 1. 

\end{enumerate}
\end{prop}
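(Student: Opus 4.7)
The plan is to prove the contrapositive: if $\utilde{\alg{M}}$ yields a logarithmic duality for $\var{A}=\classop{ISP}(\underline{\alg{M}})$, so that Assumption \ref{ass:main} is in force, then none of the four conditions can hold. Each condition is essentially the negation of a result already proved earlier in the paper, so the argument reduces to assembling a dictionary of references.

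First I would handle (1). For a finite algebra $\alg{A}\in\var{A}$ the dual $\alg{A}^*$ is finite, hence automatically enjoys the FCO property, and Proposition \ref{prop:bbound} bounds $\ess(\homi_{\alg{A}})$ by $|\alg{A}^{*\flat}/{\sim}|$. Theorem \ref{MaxDisjUnion} (sharpened by Corollary \ref{Top-Down}) then expresses $\mbf{\homi}_{\alg{A}}$ as a disjoint union of order duals of partition lattices, contradicting (1). For (2), Theorem \ref{thm:weakly} states that $\homi_{\alg{A}\alg{A}}^{>2}$ is weakly reconstructible; after passing to representatives with no inessential arguments, this immediately rules out the existence of inequivalent $f,g$ with $\deck(f)=\deck(g)$ in that range. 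For (3), Proposition \ref{prop:tadam}(\ref{it:rop01}) asserts that a homomorphism of essential arity $n$ has exactly $\binom{n}{2}$ pairwise non-equivalent identification minors, which forbids $f\circ\delta_I\equiv f\circ\delta_J$ for distinct $I,J\in\binom{[n]}{2}$. Finally, for (4), Corollary \ref{cor:tidim} forces the arity gap of any $f\in\homi_{\alg{A}\alg{B}}^{(n)}$ with $n\geq 2$ to be exactly one.

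There is no genuinely ``hard'' step here; the proposition simply repackages the consequences of the logarithmic duality framework as a non-dualizability test. The only point requiring mild care is in (1), where one must verify that the finiteness of $\alg{A}$ supplies the FCO property of $\alg{A}^*$ and the boundedness of $\ess(\homi_{\alg{A}})$ for free, so that Theorem \ref{MaxDisjUnion} (and hence Corollary \ref{Top-Down}) applies without additional hypotheses on $\underline{\alg{M}}$ beyond Assumption \ref{ass:main}.
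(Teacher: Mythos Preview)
Your approach is exactly the paper's: the proposition is stated without proof, immediately after the sentence naming Proposition~\ref{prop:tadam}, Corollary~\ref{cor:tidim}, and Theorems~\ref{thm:weakly} and~\ref{MaxDisjUnion} as the criteria, so the intended argument is precisely the contrapositive you describe, item by item.

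One caveat worth making explicit. Your hedge ``in that range'' for~(2) is not just caution but necessary: Theorem~\ref{thm:weakly} requires essential arity strictly greater than~$2$, while the proposition as printed says $n\geq 2$. For $n=2$ the conclusion actually fails even under a logarithmic duality: in $\mathcal{B}$ with $\alg{A}=\alg{2}^3$, the maps $f^*,g^*\colon[3]\to 2[3]$ given by $f^*=(1,1),(1,2),(2,3)$ and $g^*=(1,1),(2,2),(1,3)$ both collapse to the identity, so $\deck(f)=\deck(g)$, yet $f\not\equiv g$. Thus condition~(2) should read $n>2$ (or $\ess(f)>2$); this is an imprecision in the statement, not a defect in your proof.
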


We now list topics for further research. Theorem \ref{MaxDisjUnion} states that for any $\alg{A}, \alg{B}\in \var{A}$ such that $\ess(\var{A}_{\alg{A}\alg{B}})$ is bounded, the poset $\mbf{\var{A}}_{\alg{A}\alg{B}}$ is a disjoint union of finite partition lattices, and leads us to the following definition.
\begin{defin}
Let $\alg{A}, \alg{B}$ be elements of $\var{A}$ such that $\ess(\var{A}_{\alg{A}\alg{B}})$ is bounded, and let $K$ be the set of maximal elements of $\mbf{\var{A}}_{\alg{A}\alg{B}}$. The \emph{$(\alg{A}, \alg{B})$-minor sequence  $s_{\alg{A}\alg{B}}$}  is defined by $s_{\alg{A}\alg{B}}(1)=\#\{[f]\in K \mid \ess(f)\in\{0,1\}\}$ and $s_{\alg{A}\alg{B}}(n)=\#\{[f]\in K \mid \ess(f)=n\}$ for every $n\geq 2$ (these cardinals may be infinite). If $\alg{A}=\alg{B}$, we write $s_{\alg{A}}$ for $s_{\alg{A}\alg{A}}$, and we call it the \emph{minor sequence of $\alg{A}$}.
\end{defin}
 If $\ess(\var{A}_{\alg{A}\alg{B}})$ is bounded, then the sequence $s_{\alg{A}\alg{B}}$ completely characterizes $\mbf{\var{A}}_{\alg{A}\alg{B}}$, which leads us to the following problems.
\begin{enumerate}[(I)]
\item Characterize those cardinal sequences that can be realized as $(\alg{A}, \alg{B})$-minor sequence for some $\alg{A}, \alg{B} \in \var{A}$.
\item Proposition \ref{prop:complat} states that the class of finite complemented lattices can be characterized in the variety of distributive lattices by their minor sequence. Proposition \ref{prop:medmed} states that finite ternary Boolean algebras can be characterized among median algebras by their minor sequences. Generally, how to find subclasses of $\var{A}$ that can be characterized in $\var{A}$ by the minor sequences of its elements?
\item More generally, for any cardinal $\alpha$ and any subclass $\var{B}$ of $\var{A}$, say that $\var{B}$ is \emph{$\alpha$-minor determined} if for every $\alg{A}\in \var{B}$, every set of mutually non-isomorphic $\alg{B}\in \var{B}$ such that $\mbf{\var{A}}_\alg{B}\cong \mbf{\var{A}}_\alg{A}$ has cardinality smaller than $\alpha$. For example, the class of finite Boolean algebras is $1$-minor determined, but if $m>1$ then the class of finite MV${}_m$-algebras is not (if $p$ is a prime divisor of $m$ then the minor posets of $\mbf{\lucas}_p$ and $\mbf{\lucas}_1$ are isomorphic).
What are other nontrivial examples of $\alpha$-minor determined classes $\var{B}$? A similar problem has been investigated in \cite{Koubek1994} by considering endomorphism monoids instead of minor posets.
\end{enumerate}

We obtain other interesting open problems by going beyond logarithmic natural dualities.
\begin{enumerate}[(I)]\setcounter{enumi}{3}
\item Find instances of non logarithmic natural dualities for which co-products can still be easily computed in the dual category, and apply the tools developed in this paper.
\item Use the TwoSwap Theorem \cite[Theorem 2.4]{Davey2012} to study minor continuous homomorphism posets in topological algebras.
\item Use other types of dualities (Pontryagin duality, De Vries duality,\ldots) to study minor homomorphism posets.
\end{enumerate}

\section*{Acknowledgments}
The first author is supported by the Luxembourg National Research Fund under the project  PRIDE17/12246620/GPS.

\section*{Data availability statement}
Data sharing not applicable to this article as no datasets were generated or analysed during the current study.

\end{document}